\documentclass[12pt]{amsart}
\usepackage[a4paper,margin=23mm]{geometry}
\usepackage{amsfonts, amsthm, amsmath, amssymb}
\usepackage{hyperref}
\hypersetup{colorlinks=false}

\renewcommand{\leq}{\leqslant}

\renewcommand{\geq}{\geqslant}

\DeclareMathOperator{\lcm}{lcm}

\usepackage{comment}
\usepackage{graphics}
\usepackage{aliascnt}

\usepackage{enumerate}
\usepackage{amsmath}
\usepackage{amsfonts}
\usepackage{amssymb}
\usepackage{amsthm}
\usepackage{comment}
\usepackage{mathtools}
\usepackage{xcolor}

\newtheorem{theorem}{Theorem}[section]
\newtheorem{corollary}[theorem]{Corollary}

\newtheorem{lemma}[theorem]{Lemma}
\newtheorem{proposition}[theorem]{Proposition}
\theoremstyle{definition}
\newtheorem{definition}[theorem]{Definition}
\newtheorem{remark}[theorem]{Remark}

\numberwithin{equation}{section}

\newcommand\RR{\mathbb{R}}

\newcommand\ZZ{\mathbb{Z}}
\newcommand\N{\mathbb{N}}
\newcommand\NN{\mathbb{N}}

\newcommand\CC{\mathbb{C}}

\newcommand\QQ{\mathbb{Q}}

\newcommand\e{\mathrm{e}}

\numberwithin{equation}{section}

\usepackage[backend=bibtex,
style=numeric,
isbn=false,
doi=false
]{biblatex}
\title{Ratio of sum of digits functions in two bases}
\author{Pascal Jelinek}
\email{pascal.jelinek@unileoben.ac.at}
\address{
	Department Mathematics and Information Technology,
	Leoben University of Technology,
	Franz-Josef-Strasse 18, 8700 Leoben, Austria.\
}

\begin{document}
%just 2 bases with (hopefully) correct exponent
%https://math.stackexchange.com/questions/3909783/tail-lower-bounds-using-moment-generating-functions

%bin coef (and also mult coeffs) optimal exponent
%https://math.stackexchange.com/questions/235962/asymptotics-of-binomial-coefficients-and-the-entropy-function
\begin{abstract}
	In 2019 La Bretèche, Stoll and Tennenbaum showed that the ratio of the sum of digits function $s_{q_1}(n)/s_{q_2}(n)$ of two multiplicatively independent bases $q_1$ and $q_2$ is dense in $\QQ^+$. Recently Spiegelhofer proved that in the special case $s_2(n)/s_3(n)=1$ we have infinitely many solutions. Spiegelhofer extended this jointly with Drmota to show that the pair $(s_2(n),s_3(n))$ attains almost every value of $\NN^2$ and hence, in particular, that every rational ratio is attained infinitely many times.\\
	In this paper we show that, indeed, for any pair of multiplicatively independent bases $p$ and $q$, that the ratio attains every rational number infinitely many times. We also study this problem in the multiplicatively dependent case, hence giving a complete characterisation in the case of 2 bases.
\end{abstract}
\maketitle
\tableofcontents
\renewcommand{\thefootnote}{\fnsymbol{footnote}}
\footnotetext{\emph{2020 Mathematics Subject Classification.} Primary: 11A63, 60F10; Secondary: 11B25,11B50}
% 11A63 Radix representation; digital problems
% 60F10 Large deviations
% 11B25 Arithmetic progressions
% 11B50 Sequences (mod m)
\footnotetext{\emph{Key words and phrases.}  sum of digits function, digital expansions in different bases, short arithmetic progressions}
\footnotetext{ The author was supported by the Austrian Science Fund (FWF), Project P36137-N.}
\renewcommand{\thefootnote}{\arabic{footnote}}

\section{Introduction and main result}
Let $q>1$ be an integer. We define the base $q$ expansion for any positive real number $a$ to be the sum
\begin{equation}
	a= \sum_{i\in \ZZ} \delta_i(n)q^i\,.
\end{equation}
We restrict the numbers $\delta_i(n)$ to be integers such that $0\leq \delta_i(n) \leq q-1$ holds for each $i$, furthermore, we do not allow an infinite string of digits all equal to $\mathrel{{q}{-}{1}}$. With this restriction, it can easily be shown that the base $q$ expansion of any positive real number exists and is unique.

One of the most general conjectures about the expansion of a number in two different bases is Furstenberg's conjecture in dynamics, which he proposed in 1970 \cite{Furstenberg1970}. It concerns the dimensions of the shift spaces of an irrational number in two different bases. Let $O_q(x)$ be defined as 
\[ 
    O_q(x)=\{q^{\alpha}x:\alpha\in \NN\}.
\]
Given any irrational real number $x\in [0,1]$, and any two multiplicatively independent integers $q_1,q_2>1$, Furstenberg's conjecture states that
\[\dim(\overline{O_{q_1}(x)})+\dim(\overline{O_{q_2}(x)})\geq1\;,\]
where the dimension in the above inequality is the Hausdorff dimension. Shmerkin \cite{Shmerkin2019} and Wu \cite{Wu2019} proved that the set of exceptions to this conjecture has Hausdorff dimension zero. However, the most interesting case where one of the dimensions is zero is still open.\\

If one restricts to the case of representations of positive integers, one gets representations of the form
\begin{equation}
	a= \sum_{i\in \NN} \delta_i(n)q^i\,.
\end{equation}
Erd\H{o}s \cite{Erdos1979} asked whether each power of 2, except $1,4$ and $256$, has a digit in base $3$ which is equal to $2$. Dimitrov and Hove \cite{DimitrovHowe2021} proved that if there is a counterexample, then it needs to be the sum of at least 25 distinct powers of $3$. Recently, Bloom and Croot \cite{BloomCroot2025} published a preprint on arXiv studying if there are infinitely many integers that have restricted digits. This means that the digits that appear in the base $p$ representation are only allowed to be from a set $A\subsetneq \{0,1,\dots,\mathrel{{p}{-}{1}}\}$. Integers with certain sets of restricted digits are closely related to the divisibility of central binomial coefficients, via the identity
\[
	\nu_p \left(\binom{2n}{n}\right) = \frac{2s_p(n)-s_p(2n)}{p-1}
\]
for any prime number $p$. Therefore, $p$ does not divide $\binom{2n}{n}$ if and only if $n$ has no digit that is larger than $(p-1)/2$ in its base $p$ representation. A long-standing conjecture by Graham \cite{Erdos1980} is that there are infinitely many central binomial coefficients that are not divisible by $105=3\cdot5\cdot7$, which corresponds to integers $n$ that consists only of digits equal to $0,1$ in base $3$, only of digits equal to $0,1,2$ in base $5$, and only of digits equal to $0,1,2,3$ in base 7. A related conjecture by Erd\H{o}s \cite[p.71]{Erdos1980} is that no central binomial coefficient is square-free for $n>5$. The latter was solved by Sark\H{o}zy \cite{Sarkozy1985} for large enough $n$ and by Granville and Ramaré \cite{GranvilleRamare1996} in general.

In this paper, we are concerned with the values of the sum of digits function in different bases. Given the base $q$ representation of an integer, we can define the sum of digits function to be
\begin{equation}
	s_q(n):= \sum_{i\geq 0}\delta_i(n)\,.
\end{equation}
It is easy to show that for $n<N$, we have that the expectation is 
\[
	\mathbb{E}(s_q(n))=\frac{q-1}{2\log q}\log N
\]
and the standard deviation is
\[
	\sigma = \sqrt{\frac{q^2-1}{12\log q}\log N}.
\]
Combining these two facts with the fact that $\max s_q(n)< \frac{q-1}{\log q}\log N$, Chebyshev's inequality implies that for all $q_1<q_2$
\[
	\substack{\lim\\N\rightarrow\infty}\frac{\{n\leq N:s_{q_1}(N)<s_{q_2}(N)\}}{N} = 1.
\] 
On the other hand, there exists infinitely many integers $n$ such that $s_{q_2}(n)<s_{q_1}(n)$, for example if $n=q_2^t$ for some positive integer $t$. However, Senge and Straus \cite{SengeStraus1973} showed in 1973 that the simultaneous inequalities $s_{q_1}(n)<c$ and $s_{q_2}(n)<c$ have only finitely many solutions and Steward \cite{Stewart1980} later proved a quantitative version of this theorem. It remained open whether it happens infinitely many times that $s_{q_1}(n)$ and $s_{q_2}(n)$ have the ``same magnitude'' or are in fact equal.

This question was first approached in a paper by Deshouillers, Habsieger, Landreau and Laishram \cite{DHLL2017} in 2017. The wanted to find the smallest number $c$ such that the inequality
\[
	|s_3(n)-s_2(n)|<c\log n
\]
has infinitely many solutions. It is a well-known result that 
\[
    \mathbb{E}(s_3(n))-\mathbb{E}(s_2(n))\approx0.18889\log n,
\]
and hence it is easy to see that there are infinitely many solutions for $c=0.18889$. The authors of \cite{DHLL2017} showed that one can take $c=0.1457205$ and still satisfy the condition: 
\begin{theorem}[Deshouillers, Habsieger, Landreau, Laishram, 2017] \label{DHLL17}
	For sufficiently large $N$, we have that
	\[
	\#\bigl\{n\leq N:\lvert s_3(n)-s_2(n)\rvert\leq 0.1457205\log n\bigr\}>N^{0.97}.
	\]
\end{theorem}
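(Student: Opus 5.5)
Theorem~\ref{DHLL17} is a large-deviation statement. The typical value of $s_3(n)-s_2(n)$ exceeds $0.1457205\log n$. We need to show that the event $|s_3(n)-s_2(n)|\leq c\log n$ with $c=0.1457205$ still has probability at least $N^{-0.03}$ among $n\leq N$.

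The plan is to restrict to a structured subset where $s_2$ is forced up and $s_3$ is forced down, and then count. Work with $n<N=3^k$. Consider the set $A_\beta$ of $n<3^k$ whose base-$3$ expansion has digit-sum about $\beta k$, with $\beta<1$ the typical value. By the entropy estimate for multinomial coefficients, $|A_\beta|=3^{k(1-I_3(\beta)/\log 3)+o(k)}$, where $I_3$ is the Cramér rate function of a uniform digit in $\{0,1,2\}$. Among these $n$ we want many whose binary digit sum is at least $\gamma\log_2 N$, with $\gamma>1/2$ close to the typical value.

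The core step is to show that the binary digit sum is still roughly equidistributed on $A_\beta$, in a quantitative sense. Concretely, the proportion of $n\in A_\beta$ with $s_2(n)\geq(\tfrac12-\varepsilon)\log_2 N$ should be $\gg 1$. The cleanest route I would try avoids this independence question. Take a first moment over all $n\leq N$ of $\mathbf 1[s_3(n)\leq a\log n]\cdot\mathbf 1[s_2(n)\geq b\log n]$. Bound it from below as $\#\{s_3\leq a\log n\}-\#\{s_3\leq a\log n,\ s_2<b\log n\}$. Estimate the subtracted term with an exponential-moment argument in both bases: take $\sum_{n<N}e^{-\lambda s_2(n)+\mu s_3(n)}$ with $\lambda,\mu>0$ and apply Markov's inequality.

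However, the joint exponential moment does not factor, because the bases are independent. So I would instead bound it via Cauchy--Schwarz, or by splitting $n$ into a block of low base-$3$ digits and high part. Here is the decomposition. Write $n=3^{j}m+r$ with $r<3^j$ and $j$ a suitable fraction of $k$. Fix $m$ from a set with small $s_3$. Then let $r$ range over all residues with small $s_3(r)$, giving a short interval. On such a set, the binary digits of $3^jm+r$ below $\approx j\log_2 3$ are controlled by $r$, while the high binary digits come from a fixed prefix.

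The main obstacle is exactly this decoupling: showing that, as $r$ ranges over integers with constrained base-$3$ digits, the low binary digits are not biased toward small sums. I would attack it by a second-moment or Fourier computation of $\sum_r e(\theta s_2(3^jm+r))$ over the restricted set. Alternatively, I would simply use the trivial fact that $s_2(x)+s_2(y)\geq s_2(x+y)$ combined with averaging over $m$.

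Finally, optimize the parameters $a,b$, i.e. the target deviations of $s_3$ and $s_2$, subject to $b\cdot\tfrac{1}{\log 2}\cdot\ldots-a\leq c$. The total exponent loss is $I_3(a)/\log 3+I_2(b)/\log 2$ per unit $\log N$. A numerical optimization should show that this loss can be kept below $0.03$ while achieving $c=0.1457205$.
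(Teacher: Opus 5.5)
The paper does not prove this statement. It is quoted from Deshouillers--Habsieger--Landreau--Laishram, so your proposal has to stand on its own, and as written it has a genuine gap.

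\textbf{The gap.} The one step with real content is a lower bound for the number of $n\le N$ at which a condition on $s_3$ and a condition on $s_2$ hold \emph{simultaneously}. You never establish it; you call it ``the main obstacle'' and list possible attacks, none of which closes it.
In your decomposition you fix $m$ with small $s_3(m)$ and let $r$ run over residues with small $s_3(r)$. The binary digit sum of $3^jm+r$ along such a sparse, non-arithmetic set of $r$ is the original independence problem again. So are the high binary digits of $3^jm$ for ternary-restricted $m$.
The fallback $s_2(x)+s_2(y)\ge s_2(x+y)$ points the wrong way. It bounds $s_2(3^jm+r)$ from above, whereas you need a lower bound, which requires control of carries.
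The moment route has several problems:
\begin{itemize}
\item The sign of $\mu$ is wrong: to bound $\#\{s_3\le a\log n,\ s_2<b\log n\}$ the weight must be $e^{-\lambda s_2-\mu s_3}$.
\item Cauchy--Schwarz followed by Chernoff gives at best about $\sqrt{\#\{s_2<b\log n\}\cdot\#\{s_3\le a\log n\}}$. In the only regime where the subtraction can succeed, this is never better than the trivial bound $\#\{s_2<b\log n\}$.
\item If $b$ lies above the mean of $s_2$, as your plan of forcing $s_2$ up suggests, the subtracted term is presumably almost the entire main term, and no upper bound for it helps.
\end{itemize}
Finally, the loss formula $I_3(a)/\log 3+I_2(b)/\log 2$ presupposes exactly the independence you have not proved. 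The other half of the absolute value, $s_2(n)-s_3(n)\le c\log n$, is never addressed, and the numerical check is not carried out.

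\textbf{The missing idea.} No decoupling is needed. Keep your first-moment skeleton, but do not push $s_2$ up, and drop the $s_3$-condition from the subtracted term.
Put $\alpha=1/(2\log 2)$, so that $\alpha\log N$ is the mean of $s_2$. For $3^k\le N<3^{k+1}$:
\begin{itemize}
\item let $T$ be the set of $n\in[3^{k-1},3^k)$ with $s_3(n)=\lfloor\alpha\log 3^k\rfloor$;
\item let $E$ be the set of $n<3^k$ with $|s_2(n)-\alpha\log 3^k|>c'\log 3^k$, where $c'=c-O(1/k)$ absorbs rounding and the difference between $\log n$ and $\log 3^k$.
\end{itemize}
Every $n\in T\setminus E$ is a solution, and $\#(T\setminus E)\ge\#T-\#E$.
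The multinomial entropy count gives $\#T=3^{kh_3+o(k)}$, where $h_3$ is the maximal base-$3$ entropy of a distribution on $\{0,1,2\}$ with mean $\tfrac12\log_2 3$. Chernoff's bound gives $\#E\le 3^{kH_2(\frac12+c\log 2)+o(k)}$, where $H_2$ is the binary entropy in bits.
Both exponents equal $0.970359\ldots$. The constant $0.1457205$ is precisely the balance point $h_3=H_2(\tfrac12+c\log 2)$, taken just beyond it so that $\#T$ dominates. This is where both constants in the statement come from.

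\textbf{An alternative.} A genuine decoupling also works if you swap the roles of the variables. Fix $r<3^j\le N^{1/2}$ with small $s_3(r)$, and let $m$ run over a full interval. Then $s_3(3^jm+r)=s_3(m)+s_3(r)$ exactly. Moreover, $s_2$ is concentrated at $\tfrac12\log_2 N$ along the progression $3^jm+r$: its low binary digits are equidistributed because the step is odd, and its high binary digits are nearly equidistributed because the step does not exceed the length of the progression. This yields an exponent well above $0.97$.
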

In 2019, de la Bretèche, Stoll and Tennenbaum \cite{BretecheStollTenenbaum2019} improved this theorem by showing that for any two multiplicatively independent integers $q_1,q_2>1$ the ratio
	\[
        \{s_{q_1}(n)/s_{q_2}(n):n\geq 1\}
	\]
is dense in $\mathbb{R}^+$. In particular, this shows that the constant in Theorem \ref{DHLL17} can be replaced by any $\varepsilon>0$. The question whether there are infinitely many solutions to $s_{q_1}(n)=s_{q_2}(n)$ or not was answered by Spiegelhofer \cite{Spiegelhofer2023a} in 2023 for $q_1=2$ and $q_2=3$:
\begin{theorem}[Spiegelhofer, 2023] \label{bases23}
	There exist infinitely many $n$ such that
	\[
		s_2(n)=s_3(n).
	\]
	In particular, we have that
	\[
	\#\bigl\{n\leq N: s_3(n)=s_2(n)\bigr\}\gg N^{0.79}.
	\]
\end{theorem}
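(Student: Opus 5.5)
We follow Spiegelhofer's strategy: find a short interval of consecutive integers on which the base-$3$ digit sum is nearly constant and sits at a prescribed level, while the base-$2$ digit sum sweeps through a whole range of values; an intermediate-value argument then gives equality. Concretely, fix a large $N$ and work with integers of size about $3^{\lambda}$, where $\lambda\log 3\approx \mu\log 2$. Choose a block length $L\approx c\lambda$ with $c$ small. We look for $n$ of the form $n=m\cdot 2^{L}+r$ with $0\le r<2^{L}$. On such a block, $s_2(n)=s_2(m)+s_2(r)$, so as $r$ runs through $[0,2^L)$ the base-$2$ sum takes every value in $[s_2(m),s_2(m)+L]$.

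The base-$3$ digit sum does not simply decouple on this block, but it changes little. The block $[m2^L, m2^L+2^L)$ lies inside $O(1)$ consecutive base-$3$ blocks of length $3^{L'}$ with $3^{L'}\asymp 2^L$. There $s_3(n)=s_3(\text{high part})+s_3(\text{low }L'\text{ digits})$, and the high part is constant up to a bounded number of carries. So we need a cleaner device.

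Choose instead $r$ with $s_2(r)$ controlled, and compare pointwise. Among the $2^L$ values of $r$, the pair $(s_2(n),s_3(n))$ is approximately jointly distributed: $s_2(r)\sim L/2$ and $s_3$ of the low part $\sim L'$, each with Gaussian fluctuations of size $\sqrt{L}$. The key claim is that for a positive proportion of $m\le N/2^L$, one has $s_3$ of the high part of $m2^L$ exceeding $s_2(m)$ by an amount $D$ with $|D-(\mathbb E s_2(r)-\mathbb E s_3(\text{low}))|\le C\sqrt L$. We then use a local limit theorem for the joint law of $(s_2(r),s_3(\text{low}))$ over $r$ in the block to produce an $r$ with $s_2(r)-s_3(\text{low})=D$ exactly.

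The claim about the high parts needs two inputs. The first is the de la Bretèche–Stoll–Tenenbaum density (or a Drmota–Spiegelhofer joint equidistribution) applied at scale $N/2^L$, which shows that $s_3-s_2$ of the high parts takes all values in a window of width $\gg\sqrt{\log N}$ with positive density. The second handles the mean mismatch: since $\mathbb E s_3-\mathbb E s_2\approx0.189\log n$, we do not choose $m$ at random. We take $m$ with $s_2(m)$ about $0.189\log N$ above average and $s_3(m)$ average. Large deviations give $N^{1-\delta}$ such $m$ with an explicit $\delta$, which is where a counting exponent such as $0.79$ comes from.

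The main obstacle is the local limit theorem for $(s_2,s_3)$ jointly on short arithmetic progressions, that is, the independence of the low digits in the two bases. Here I would bound the exponential sums $\sum_r e(\alpha s_2(r)+\beta s_3(r))$ using Spiegelhofer's "short interval" Fourier estimates for digit sums in the non-multiplicatively-dependent bases $2,3$. I expect these estimates to yield a saving $2^{-\eta L}$ for $(\alpha,\beta)$ away from $0$. This saving feeds a second-moment count of solutions, giving $\gg N^{0.79}$ solutions.
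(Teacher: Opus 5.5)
Your plan breaks down at the step you call the key claim. The repair you then propose assumes exactly what has to be proved.

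Since $\mathbb{E}s_3(n)-\mathbb{E}s_2(n)\approx 0.189\log n$, concentration of $s_2(m)$ and of the ternary digit sum of the high part of $m2^L$ separately gives $D=0.189\log(N/2^L)+o(\log N)$ for all but $o(N/2^L)$ values of $m$. But you need $D\approx\mathbb{E}s_2(r)-\mathbb{E}s_3(\text{low})\approx-0.189\log(2^L)$. The discrepancy is $\asymp\log N\gg\sqrt L$, so no positive proportion of $m$ works, and the usable $m$ lie in a large-deviation set.

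Your sentence ``take $m$ with $s_2(m)$ about $0.189\log N$ above average and $s_3(m)$ average'' therefore carries the whole problem. Large-deviation bounds for $s_2$ count such $m$, but they say nothing about the ternary digits of $m2^L$ on that sparse set. Asserting those digits stay typical there is asymptotic independence of bases $2$ and $3$ in a large-deviation regime, and your cited inputs do not give it. De la Bret\`eche--Stoll--Tenenbaum only give density of the ratios $s_2/s_3$, not positive density of values of $s_3-s_2$ in a window. The Drmota--Spiegelhofer joint estimates already contain the theorem, so invoking them is circular. The opening intermediate-value idea also fails: on $[m2^L,m2^L+2^L)$ the low $L'\approx L\log2/\log3$ ternary digits run through essentially all configurations, so $s_3$ moves as much as $s_2$ does.

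The missing idea, used by Spiegelhofer and generalised in the paper's proof of Theorem~\ref{main_thm_indep}, is to put the large deviation into a fixed residue class rather than a digit-defined set. One restricts to a progression $n=dk+L$, where $d$ is a product of powers of the bases. The residue $L$ is chosen so that its low digits in one base have a prescribed atypical digit sum, making $\mathbb{E}f=0$ along the progression (here $f=s_2-s_3$).

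The conditioning set is now an arithmetic progression, and along it the other base's digit sum can actually be controlled. It is concentrated within $Jm$ of its mean (Proposition~\ref{thm_concentration}) and equidistributed modulo $m$ (Proposition~\ref{prop_equi}). Hence $f(n)=jm$ with $|j|\le J$ for a proportion $\gg 1/m$ of the $n$ in the progression. Exactness then comes from the combinatorial shift lemma (Proposition~\ref{prop_shift}), which supplies $d_{-j}$ with $f(n+d_{-j})=0$; no local limit theorem is needed.

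Your two-dimensional local limit theorem is also unsupported as stated. It would need uniform asymptotics for $\sum_r e(\alpha s_2(r)+\beta s_3(c+r))$, where $c$ is an arbitrary residue of $m2^L$ modulo $3^{L'}$. These must include the Gaussian main term near $(0,0)$ and the range $L^{-1/2}\ll\|(\alpha,\beta)\|\ll1$, where a uniform saving $2^{-\eta L}$ is not what holds. No cited estimate provides this. Finally, the exponent $0.79$ is never actually derived.
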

In 2025, Drmota and Spiegelhofer \cite{DrmotaSpiegelhofer2025} proved estimates for the number of solutions of $s_2(n)=a,s_3(n)=b$ for any $(a,b)\in \NN^2$. These estimates imply that almost all pairs $(a,b)\in \NN^2$ are attained.

In this paper, we extend Theorem \ref{bases23} by Spiegelhofer \cite{Spiegelhofer2023a} to the case of any bases $q_1,q_2$ that are multiplicatively independent. Additionally, we show that any rational number is attained by $s_{q_1}(n)/s_{q_2}(n)$ for infinitely many $n$, which in turn improves on the result by de la Bretèche, Stoll and Tennenbaum, who showed that every positive rational number is a limit point.\\
\begin{theorem}\label{main_thm_indep}
	Let $1<q_1,q_2$ be two multiplicatively independent integers. For any  $r\in \QQ^+$ we have that there exist infinitely many $n\in \NN$ such that
	\[
		\frac{s_{q_1}(n)}{s_{q_2}(n)}=r.
	\]
	For the quantitative statement, we need to assume that $q_1,q_2, r$ are chosen such that
	\[
		r \frac {(q_2-1)\log q_1}{(q_1-1)\log q_2} <1.
	\]
	This can always be achieved by exchanging $q_1$ and $q_2$, and replacing $r$ by $1/r$. With this assumption, we define $\beta$ to be 
	\[
		\beta:=\substack{\lim\\N\rightarrow\infty}\frac{\{0\leq n \leq N: s_{q_2}(n)= \frac{q_1-1}{2r\log q_1}\log N\}}{\log N}.
	\]
	Then we have that
	\[
		\#\{1<n\leq N: \frac{s_{q_1}(n)}{s_{q_2}(n)}=r\} \gg N^{\beta-\varepsilon}.
	\]
\end{theorem}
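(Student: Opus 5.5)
Write $r=a/b$ in lowest terms. The plan follows Spiegelhofer's strategy for $s_2(n)=s_3(n)$ (Theorem \ref{bases23}): fix one of the two digit sums by construction, and control the other one in distribution.

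\textbf{Step 1: fix the $q_2$-digit sum.} Choose a target value $k\approx \frac{q_1-1}{2r\log q_1}\log N$ with $b\mid k$. Consider the set
\[
A_k=\{n<N:\ s_{q_2}(n)=k\}.
\]
By the definition of $\beta$, this set has size $N^{\beta+o(1)}$; a local limit or saddle-point count of compositions of $k$ into $\log N/\log q_2$ digits gives the entropy exponent. The hypothesis
\[
r\frac{(q_2-1)\log q_1}{(q_1-1)\log q_2}<1
\]
says exactly that $k$ lies below the mean $\frac{q_2-1}{2\log q_2}\log N$ of $s_{q_2}$. So $A_k$ is a large-deviation set, and $\beta<1$.

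\textbf{Step 2: show $s_{q_1}$ is still typical on $A_k$.} We need $s_{q_1}(n)$ restricted to $n\in A_k$ to be distributed like the unconditioned $s_{q_1}$. That means mean $\frac{q_1-1}{2\log q_1}\log N$, which equals $rk$ by the choice of $k$, and variance $\asymp\log N$. Moreover, every residue class of $s_{q_1}$ modulo $q_1-1$ compatible with $n\bmod (q_1-1)$ must be hit, via a local limit theorem. Then
\[
\#\{n\in A_k: s_{q_1}(n)=rk\}\gg |A_k|/(\log N)^{C}\gg N^{\beta-\varepsilon}.
\]
Any non-quantitative "infinitely many" claim for $r$ on the other side of the threshold follows from the swap $q_1\leftrightarrow q_2$, $r\to 1/r$ stated in the theorem.

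\textbf{Step 3: technique for the conditioned distribution.} I would compute the characteristic function
\[
\sum_{n\in A_k} e(\theta s_{q_1}(n)).
\]
Write the indicator of $A_k$ as a Fourier integral in a variable $\phi$. This reduces the problem to bounding the double exponential sums
\[
\sum_{n<N} e\bigl(\theta s_{q_1}(n)+\phi s_{q_2}(n)\bigr).
\]
A saddle point in $\phi$ (a tilt parameter $\phi=i t$ making $k$ the mean) turns the weight into a product measure on the base-$q_2$ digits. I would then use multiplicative independence of $q_1,q_2$, through Drmota–Spiegelhofer style estimates or a Steinhaus/Baker-type bound, to show: for $\theta$ away from the rationals $j/(q_1-1)$, the tilted sum $\sum_n w(n)e(\theta s_{q_1}(n))$ saves a factor $\exp(-c\|\theta\|^2\log N)$ relative to $\sum_n w(n)$.

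Concretely, I would split $n$ into blocks of base-$q_1$ digits of length $\approx\lambda\log N$. I would show that the tilted base-$q_2$ measure, pushed to each block's base-$q_1$ digits, is close to a measure with non-degenerate digit variance. The tool would be discrepancy of $n/q_1^j$ in short intervals, or the "carry propagation" mixing lemmas in Spiegelhofer's approach for small $\theta$, with a trivial-type savings for large $\theta$.

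\textbf{Main obstacle.} The hardest step is this non-degeneracy (Step 3) under the exponentially tilted base-$q_2$ measure. The weight $t^{s_{q_2}(n)}$ is not smooth, so one needs the analogue of Spiegelhofer's lemma that $\sum_n t^{s_{q_2}(n)}e(\theta s_{q_1}(n))$ has cancellation, uniformly in $t$ near the saddle. My first attempt would be a van der Corput/Gowers-type differencing in base $q_1$: this reduces to correlations of base-$q_2$ digit weights along arithmetic progressions with difference $q_1^j$. Those correlations can be controlled by the Fourier $L^1$ bounds for $t^{s_{q_2}}$ together with multiplicative independence, since $q_1^j\bmod q_2^m$ equidistributes.
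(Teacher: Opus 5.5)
\textbf{There is a genuine gap, and it is the step you flag yourself as the main obstacle.} Steps 2--3 amount to a joint local limit theorem for $(s_{q_1},s_{q_2})$ in the large-deviation range of $s_{q_2}$. You would need two things, uniformly in $\phi_0$:
\begin{enumerate}
\item Gaussian asymptotics near $\theta=0$, with the conditional mean of $s_{q_1}$ correct to $O(\sqrt{\log N})$.
\item Away from the points $j/(q_1-1)$, a saving $\exp(-c\|\theta\|^2\log N)$ for $\sum_{n<N}t^{s_{q_2}(n)}\e\bigl(\theta s_{q_1}(n)+\phi_0 s_{q_2}(n)\bigr)$.
\end{enumerate}
Nothing in your proposal establishes this, and the differencing route you sketch does not give it.

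Take $w(n)=t^{s_{q_2}(n)}$ with $t\neq 1$, $T_i=\sum_{0\leq c<q_2}t^{ic}$, $N=q_2^{\nu}$ and shifts $|h|<q_2^{\mu}$. Bounded trivially, the van der Corput right-hand side is of order $(q_2T_2/T_1^2)^{\nu-\mu}\bigl(\sum_{n<N}w(n)\bigr)^2$, and $q_2T_2>T_1^2$ by Cauchy--Schwarz. The reason is that $w(n)w(n+h)$ does not decorrelate until $h$ has size $N^{1-o(1)}$. So each differencing step costs a factor $N^{\gamma(1-\mu/\nu)}$, with $\gamma=\log_{q_2}(q_2T_2/T_1^2)>0$ depending only on $t$, before any cancellation is used. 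But differencing only helps with short shifts: in the paper, shifts of size $N^{O(\eta)}$ disturb only a short window of digits, so all other digits cancel. With short shifts, this fixed power loss cannot be absorbed by a saving $N^{-c\|\theta\|^2}$ when $\|\theta\|$ is small.

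Moreover, the claim ``$q_1^j \bmod q_2^m$ equidistributes'' is false in general. For $q_1=6$, $q_2=4$ one has $q_1^j\equiv 0 \bmod q_2^m$ once $j\geq 2m$. Even for coprime bases, residues of powers do not control what such correlations need. What is needed is the digit structure, at every scale, of multiples of powers of one base written in the other. The paper gets exactly this Diophantine input from the $p$-adic subspace theorem (Lemma \ref{no-long-subsequences}, Corollary \ref{cor_powers_admissible}), with Lemma \ref{small_gcd} replacing the Chinese remainder theorem when $\gcd(q_1,q_2)>1$.

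\textbf{The idea you are missing is how the paper avoids any tilted measure.} It realises the large deviation deterministically:
\begin{itemize}
\item It fixes a residue $L$ modulo $d=q_1^{\xi_1}q_2^{\xi_2+\zeta_2}$, with $\zeta_2\approx(1-\varepsilon)\log_{q_2}N$. This $L$ lies in the class $L'$ of Proposition \ref{prop_shift} and has the required base-$q_2$ digit sum on its low digits. Counting such $L$ is purely combinatorial, and this is where $N^{\beta-\varepsilon}$ comes from.
\item It then works along the unweighted progression $L+dx$, with $x$ in an interval of length a small power of $N$.
\item Since $d$ is admissible in base $q_1$, $s_{q_1}(L+dx)$ is typical essentially independently of $L$. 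This gives concentration $|f|\leq Jm$ (Proposition \ref{thm_concentration}). It also gives equidistribution modulo $m\approx\sqrt{\log N/J}$ (Proposition \ref{prop_equi}, via van der Corput and Gowers norms for unweighted sums).
\item No local limit theorem is needed. Exactness comes from Spiegelhofer's shift construction (Proposition \ref{prop_shift}), which turns $f(n)=jm$ with $|j|\leq J$ into $f(n+d_{-j})=0$. So only the frequencies $h/m$ have to be controlled.
\end{itemize}

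Two smaller points. First, as literally stated, the hypothesis places $k$ above, not below, the mean of $s_{q_2}$. This is harmless for tilting, but you then also need $k<(q_2-1)\log_{q_2}N$. Second, $k$ must be chosen with $rk\in\ZZ$ and $rk\equiv k \bmod \gcd(q_1-1,q_2-1)$, the analogue of the compatibility condition in Proposition \ref{prop_shift}.
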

\begin{remark}
    For the readers familiar with the paper by Spiegelhofer, we note that his method directly generalises to the case where $q_1$ and $q_2$ are coprime and $r \frac {(q_2-1)\log q_1}{(q_1-1)\log q_2}<\frac 12$. The ratio $1/2$ corresponds to the study of arithmetic progressions, where the number of steps $N$ and the step size $d$ satisfy the relationship $d=(N)^{1/2}$. Our main contribution is in providing new methods to study arithmetic progressions of arbitrarily large step size.
\end{remark}

In addition to studying the ratios of the sum of digits function in the case of two multiplicatively independent bases, we provide below the case where $q_1$ and $q_2$ are multiplicatively dependent. Hence, together with Theorem \ref{main_thm_indep} above, this gives a complete characterisation of the possible ratios.
\begin{theorem}\label{main_thm_dep}
	Let $1<q_1,q_2$ be two distinct multiplicatively dependent numbers, say $q_1=b^k$ and $q_2=b^{\ell}$ for some coprime natural numbers $k,\ell$  and let $r\in \QQ^+$. Then we have that there are infinitely many $n\in \NN$ such that
	\[ 
		\frac{s_{q_1}(n)}{s_{q_2}(n)}=r
	\]
	as long as
	\[ 
		\frac{1}{b^{\ell-1}}\leq r \leq b^{k-1}.
	\]
More precisely, we have that
	\[
		\#\left\{1<n\leq N: \frac{s_{q_1}(n)}{s_{q_2}(n)}=r\right\} \gg N^{c},
	\]
	where $0<c\leq 1$ is an effectively computable constant, which depends on $k$ and $\ell$ 
\end{theorem}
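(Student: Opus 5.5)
The plan is to exploit the fact that $b^{k\ell}$ is a common power of both bases. Write $n$ in base $b^{k\ell}$, say $n=\sum_i D_i b^{k\ell i}$ with $0\leq D_i<b^{k\ell}$. Each block of $k\ell$ consecutive base-$b$ digits is aligned with the digit boundaries of both base $q_1=b^k$ and base $q_2=b^\ell$. Hence
\[
	s_{q_1}(n)=\sum_i s_{q_1}(D_i),\qquad s_{q_2}(n)=\sum_i s_{q_2}(D_i),
\]
and the problem reduces to choosing finitely many block types and combining them additively.

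\textbf{Step 1: extreme block types.} Take $D=b^m$ with $0\leq m<k\ell$. In base $b^k$ this is a single digit $b^{m\bmod k}$, and in base $b^\ell$ it is a single digit $b^{m\bmod \ell}$. So
\[
	s_{q_1}(D)=b^{m\bmod k},\qquad s_{q_2}(D)=b^{m\bmod \ell}.
\]
Since $\gcd(k,\ell)=1$, the Chinese remainder theorem gives some $m_A<k\ell$ with $m_A\equiv k-1 \pmod k$ and $m_A\equiv 0\pmod \ell$. The block $A=b^{m_A}$ then has $(s_{q_1},s_{q_2})=(b^{k-1},1)$. Similarly there is $m_B$ with $m_B\equiv 0\pmod k$ and $m_B\equiv \ell-1\pmod\ell$, giving a block $B$ with $(s_{q_1},s_{q_2})=(1,b^{\ell-1})$. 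Finally the zero block contributes $(0,0)$.

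\textbf{Step 2: solving for the ratio.} Put $x$ blocks of type $A$ and $y$ blocks of type $B$ among the blocks of $n$, and zero blocks elsewhere. Then
\[
	\frac{s_{q_1}(n)}{s_{q_2}(n)}=\frac{xb^{k-1}+y}{x+yb^{\ell-1}},
\]
and this equals $r$ if and only if $x(b^{k-1}-r)=y(rb^{\ell-1}-1)$. Under the hypothesis $b^{-(\ell-1)}\leq r\leq b^{k-1}$ both coefficients are nonnegative rationals.
\begin{itemize}
	\item If both coefficients are positive, clearing denominators gives a positive integer solution $(x_0,y_0)$.
	\item If $r=b^{k-1}$, take $(x_0,y_0)=(1,0)$.
	\item If $r=b^{-(\ell-1)}$, take $(x_0,y_0)=(0,1)$.
\end{itemize}
(If $k=\ell=1$ the bases coincide, which is excluded.)

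\textbf{Step 3: counting.} For $M\geq1$, use exactly $L=M(x_0+y_0)$ blocks, with $Mx_0$ of type $A$ and $My_0$ of type $B$ in any arrangement. Every such arrangement gives a distinct $n<b^{k\ell L}$ with ratio exactly $r$. The number of arrangements is $\binom{L}{Mx_0}$.
\begin{itemize}
	\item When $x_0,y_0\geq1$, this count is at least $2^M$.
	\item At an endpoint, I would instead allow zero blocks: take $M$ nonzero blocks among $2M$ positions, which gives $\binom{2M}{M}\geq 2^M$ arrangements.
\end{itemize}
Setting $N=b^{k\ell L}$ gives $\gg N^{c}$ solutions with
\[
	c=\frac{\log 2}{k\ell(x_0+y_0)\log b}.
\]
Intermediate $N$ are handled by monotonicity, since the solutions counted for a smaller $M$ remain below every larger $N$.

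\textbf{Main obstacle.} The construction itself is elementary; the only delicate point is the form of the constant. As produced here, $c$ depends on $r$ through $x_0+y_0$. If a constant depending only on $k,\ell$ is really intended, I would try first to refine Step 3 by mixing many block types $b^m$ with intermediate ratios $b^{(m\bmod k)-(m\bmod\ell)}$ and counting via a local limit or entropy argument. I expect this uniformity in $r$ to be the main difficulty, rather than the existence of infinitely many solutions.
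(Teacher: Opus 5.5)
Your proof is correct, and its existence part coincides with the paper's. The paper's $n=\sum_{i=1}^{s}b^{x_i}+\sum_{i=s+1}^{s+t}b^{y_i}$ with $x_i=x+ik\ell$, $y_i=y+ik\ell$ is exactly a string of your $A$- and $B$-blocks in base $b^{k\ell}$, and it leads to the same equation $s(vb^{k-1}-u)=t(ub^{\ell-1}-v)$. The paper also proves that the ratio always lies in $[b^{-(\ell-1)},b^{k-1}]$, which the statement does not require.

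The routes differ in the counting. The paper prepends an arbitrary $a\leq b^M$ in the low digits. It then absorbs $e=s_{q_1}(a)$ and $f=s_{q_2}(a)$ by solving $s(vb^{k-1}-u)-t(ub^{\ell-1}-v)=f(ub^{\ell-1}-v)-e(vb^{k-1}-u)$ with $s,t\asymp M$, which gives about $b^M$ solutions below $b^{c_1M}$. You instead count rearrangements of a fixed multiset of blocks. Your version is simpler and handles the endpoints more carefully. For $r=b^{k-1}$ (resp. $r=b^{-(\ell-1)}$) one coefficient vanishes, and the paper's compensation forces $f=0$ (resp. $e=0$), i.e. $a=0$. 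So its quantitative step yields nothing there, whereas your interleaving with zero blocks works. The paper's exponent is not uniform in $r$ either. With $\alpha=vb^{k-1}-u$ and $\gamma=ub^{\ell-1}-v$, $s+t$ must be of order $M(\alpha+\gamma)/\min(\alpha,\gamma)$, which degenerates near the endpoints.

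The obstacle you flag disappears if you use your endpoint device for every $r$. Put the $L=M(x_0+y_0)$ nonzero blocks ($Mx_0$ of type $A$ followed by $My_0$ of type $B$, in a fixed order) into $L$ of $2L$ block positions, and fill the remaining positions with zero blocks. The $\binom{2L}{L}\geq 2^L$ choices of positions alone give distinct $n<b^{2k\ell L}$ with ratio exactly $r$. Hence $\#\{1<n\leq N: s_{q_1}(n)/s_{q_2}(n)=r\}\gg_r N^{\log 2/(2k\ell\log b)}$. The exponent depends only on $k,\ell,b$, and $r$ enters only the implied constant, through the step $x_0+y_0$ in $L$. No local limit or entropy argument is needed. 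One small slip: at the endpoints your exponent is $\log 2/(2k\ell\log b)$, not the displayed formula with $x_0+y_0=1$, because you use $2M$ positions.
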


\subsection{Notation}
We fix some notation for the remainder of the paper. We use $\mathrm{e}(x):=\exp (2\pi i x)$. By $\mathcal{C}$ we denote the complex conjugation operator and  $\mathcal{C}^k$ means we apply this operator $k$ times. In addition, $||x||$ denotes the distance to the nearest integer of $x$. We also use the standard asymptotic notations $\ll, \gg, O(\cdot)$ and $o(\cdot)$.\\
$q_1,q_2\geq 2$ are two multiplicatively independent integers, which will also be the two bases in Theorem \ref{main_thm_indep}. We reserve the letter $p$ for prime numbers.\\
We recall that 
\[
	s_q(n) = \sum_{i\geq0}\delta_i(n)
\] 
is the sum of digits function in base $q$ of $n$. Given an interval $I \subset \NN$, we denote by
\[
	s_q^I(n) = \sum_{i\in I} \delta_i(n)
\]
the restriction of the sum of digits function to the digits in the interval $I$. Furthermore, we use $(n)^I$ to denote the digits of $n$ in the interval $I$, i.e.
\[
    (n)^I = \sum_{i\in I} \delta_i(n)q^{i-k},
\]
where $k$ is the minimum integer in the index set $I$.\\
Throughout the paper, we let $N\in \NN$ be a sufficiently large number and let $\varepsilon>0$ be a sufficiently small number. While studying the values of $f(n):=as_p(n)-bs_q(n)$ for $n\in [N,2N]$, we use the following constants, which depend only on $N$ and $\varepsilon$, repeatedly:
\begin{enumerate}
	\item $\lambda = \lfloor \log N \rfloor$ corresponds to the size of the values of $f(n)$
	\item $m = ab(q_1-1)(q_2-1)\left\lfloor\frac{\sqrt{\log N}}{ab(q_1-1)(q_2-1)\sqrt{J}}\right\rfloor+1$ is the modulus
	\item $J=\lfloor \varepsilon (\log \log N)^{1+\varepsilon} \rfloor$
	\item $\beta= \lambda^{3/5}$
\end{enumerate}

\section{Structure of this paper}
The proofs of Theorem \ref{main_thm_indep} and Theorem \ref{main_thm_dep} are independent of each other. Section \ref{sec_Prelim} contains a collection of lemmas which are needed to prove Theorem \ref{main_thm_indep}. In Sections \ref{sec_Proof_of_claims} to \ref{sec_uniform} we prove the three key propositions which we outline below and in Section \ref{sec_proof_main_theorem} we combine them to prove Theorem \ref{main_thm_indep}. Finally, in Section \ref{sec_Proof_of_dep} we prove Theorem \ref{main_thm_dep}.

\subsection{Overview of the proof of Theorem \ref{main_thm_indep}}

Our proof of Theorem \ref{main_thm_indep} follows a similar procedure to the one used by Spiegelhofer \cite{Spiegelhofer2023a} in 2023. He studied the difference $s_2(n)-s_3(n)$ and proved three different properties about this function, which could then be combined to deduce the theorem. Similarly, we study the function $f(n)=as_p(n)-bs_q(n)$ and prove the following three properties:
\begin{enumerate}
	\item There exists an arithmetic progression with step size $\sim N^{\varepsilon}$ and a set $S=\{jm:-J\leq j\leq J\}$ such that if $f(n)=s\in S$, we can find a $d_s\in \NN$ such that $f(n+d_s)=0$.
	\item There exists an arithmetic progression such that along this arithmetic progression $\mathbb{E}(f(n))=0$.
	\item Along the arithmetic progression above, the function $f(n)$ is equidistributed modulo $m$.
\end{enumerate}

First, we observe the identity $s_q(n)\equiv n \bmod q-1$. Denote $r= \gcd(a(q_1-1),b(q_2-1))$. Then we have that
\[
	f(n+d_j)-f(n) \equiv (a-b)d_j \bmod r.
\]
Furthermore, $m \equiv 1 \bmod r$. Hence, a necessary condition such that the equation
\[
	f(n+d_j)-f(n)=jm
\]
is soluble is that $\gcd(a-b,r)\mid \gcd(j,r)$. We have the following proposition, which shows that this is also a sufficient condition.
\begin{proposition}\label{prop_shift}
	Let $\xi_1, \xi_2\leq c(2J+1)\beta$, for some constant $c$. There exists a $d_j<q_1^{\xi_1}$ for all $-J\leq j\leq J$ and there exists an $L' \in \{0,\dots,\lcm(q_1^{\xi_1},q_2^{\xi_2})-1\}$ such that for all $j$ satisfying $-J\leq j\leq J$ and $\gcd(a-b,r)\mid \gcd(j,r)$ the equation
	\[
		f(n+d_j)-f(n)=jm
	\]
    holds for all $n\in A':=L'+q_1^{\xi_1}q_2^{\xi_2}\NN$.
\end{proposition}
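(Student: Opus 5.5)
We prove Proposition \ref{prop_shift} by building each shift $d_j$ out of blocks of base-$q_1$ digits chosen so that, on a suitable residue class of $n$, adding $d_j$ creates no carries beyond a controlled window in base $q_1$. In base $q_2$ the carries are then also confined and completely determined. We use $p=q_1$, $q=q_2$ throughout.

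The first step is to fix $L'$ so that $n\in A'$ has prescribed low digits in both bases. For this we invoke the Chinese remainder theorem in the form of a compatible pair of residues. Write $g=\gcd$-structure of $q_1^{\xi_1}$ and $q_2^{\xi_2}$. Any residue modulo $\lcm(q_1^{\xi_1},q_2^{\xi_2})$ determines the lowest $\xi_1$ digits in base $q_1$ and the lowest $\xi_2$ digits in base $q_2$ simultaneously. We choose $L'$ so that its lowest $\xi_1$ base-$q_1$ digits are all $0$. Then for every $d<q_1^{\xi_1}$ we get $s_{q_1}(n+d)=s_{q_1}(n)+s_{q_1}(d)$, since no base-$q_1$ carry occurs for $n\in A'$.

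It remains to control $s_{q_2}(n+d)-s_{q_2}(n)$. Since $n\in A'$ fixes $n \bmod q_2^{\xi_2}$, the quantity $s_{q_2}^{[0,\xi_2)}(n+d)$ is a deterministic function of $d$. What is not fixed is the carry into digit $\xi_2$ and beyond. We avoid it by additionally requiring that $L'\bmod q_2^{\xi_2}$ has its top digits small, say below $q_2^{\xi_2}-q_1^{\xi_1}$. This is possible by taking $\xi_2$ a bit larger than $\xi_1\log q_1/\log q_2$ and using that the base-$q_2$ residue $L' \bmod q_2^{\xi_2}$ only has to lie in a prescribed class modulo $\gcd(q_1^{\xi_1},q_2^{\xi_2})$. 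That class is compatible with having $L'\equiv 0 \bmod q_1^{\xi_1}$, so we may choose the base-$q_2$ digits of $L'$ to be $0$ everywhere except at the low positions forced by divisibility. With this choice,
\[
f(n+d)-f(n)=a\,s_{q_1}(d)-b\,\bigl(s_{q_2}(L'+d \bmod q_2^{\xi_2})-s_{q_2}(L' \bmod q_2^{\xi_2})\bigr)=:F(d),
\]
which is independent of $n\in A'$.

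The heart of the proof is to show that $F$ takes every value $jm$ with $|j|\le J$ and $\gcd(a-b,r)\mid\gcd(j,r)$ for some $d<q_1^{\xi_1}$. We construct $d$ as a sum of widely separated blocks, each occupying about $\beta$ digits; this is why $\xi_1\asymp (2J+1)\beta$. Each block is a small number $u$, e.g. $u=q_1^{t}$, $u=q_2^{t}$, or $u=1$, placed at a position $q_1^{i}$ far from the others. The block contributes $a s_{q_1}(u)$ to the base-$q_1$ term. For the base-$q_2$ term we need that $q_1^{i}u$ adds to $L'$ in base $q_2$ essentially independently of the other blocks, which I expect requires a separation argument: the base-$q_2$ expansion of $q_1^{i}$ has roughly random digits, and we need to use that the blocks are separated by $\beta=\lambda^{3/5}$ digits so that interactions are negligible. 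The cleaner route is probably not to demand exact additivity. Instead we use the available flexibility: by varying $d$ over a large range, $s_{q_2}$ of the low part takes all values in an interval of length $\gg\xi_2$, while $s_{q_1}(d)$ is independently adjustable by adding units at isolated base-$q_1$ positions. Together with the congruence $F(d)\equiv (a-b)d \bmod r$, this shows the image of $F$ covers all admissible residues in an interval containing $[-Jm,Jm]$, since $Jm\ll \sqrt{J\log N}\ll\beta$. Making this covering argument rigorous, in particular achieving every admissible value exactly rather than approximately, is the main obstacle. I would attempt it via a discrete intermediate value argument: changing $d$ by a single base-$q_1$ unit at a suitable position changes $F$ by a bounded amount, and adjusting within the residue class mod $r$ then fills all gaps.
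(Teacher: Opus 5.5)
Your first two steps are correct: with $L'\equiv 0 \bmod q_1^{\xi_1}$ there is no base-$q_1$ carry, and with small top base-$q_2$ digits there is no carry past position $\xi_2$. But they remove exactly the freedom that makes the proposition provable. The step you flag as ``the main obstacle'' is the substance of the statement, and it is left unproved.

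After your normalisation, $f(n+d)-f(n)=F(d)$ is essentially $a s_{q_1}(d)-b s_{q_2}(d)$, up to a fixed base-$q_2$ offset coming from $L'$. So you must find $d<q_1^{\xi_1}$ with $F(d)=jm$ \emph{exactly} for every admissible $j$. The mean of $F$ on numbers of size $D$ is $\bigl(\frac{a(q_1-1)}{2\log q_1}-\frac{b(q_2-1)}{2\log q_2}\bigr)\log D\neq 0$, since $\log q_1/\log q_2\notin\QQ$. For $j$ whose sign is opposite to that of this mean, you therefore face a large-deviation hitting problem of the same type as Theorem \ref{main_thm_indep} itself, so nothing has been reduced.

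The mechanisms you suggest do not close this.
\begin{itemize}
\item Adding $q_1^i$ to $d$ changes $s_{q_1}(d)$ by $1$ (away from carries), but it changes the base-$q_2$ digits of $d$ in an uncontrolled way; heuristically $s_{q_2}$ moves by order $\sqrt{i}$. So $s_{q_1}(d)$ is not ``independently adjustable''.
\item The increments of $F$ along any such path are neither bounded nor of size one, so an intermediate value argument only brings you near $jm$, not onto it.
\item The congruence $F(d)\equiv(a-b)d\bmod r$ only restricts which values can occur; it does not force any of them to be attained.
\end{itemize}

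The missing idea, which is how the paper argues, is to invert the roles: fix the shifts explicitly and tune $L'$, using two independent components of it.

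The paper takes $d_j$ with prescribed base-$q_1$ digits: $j+J+1$ consecutive blocks $z=(0\,0\,q_1{-}1)^{\beta}$, above a long gap of zeros, plus a small low part $y$ handling the class mod $r$. Thus $d_j$ extends $d_{j-1}$ by one block. It then splits $L'$ into a base-$q_1$ part $L_1$ sitting opposite these blocks and a base-$q_2$ part $L_2$. These are glued by the CRT substitute Lemma \ref{small_gcd}, with a low filler $L_3$ living in the zero gap.

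$L_2$ is chosen via the second-moment bound of Lemma \ref{Addition}, applied simultaneously to all $d_j$ and their carry-shifted versions. This gives $s_{q_2}(L_2+d_j)-s_{q_2}(L_2)=O(J\lambda^{1/2})\ll\beta$. Runs of digits $q_2-1$ in $L_2$ then adjust this difference so that the required base-$q_1$ change has the right residue modulo $a(q_1-1)$. The base-$q_2$ side thus only has to be small and known, not designed.

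Exactness is produced entirely in base $q_1$. Adding $0\,0\,q_1{-}1$ to $0\,0\,0$, $0\,0\,1$ or $0\,(q_1{-}1)\,1$ changes the digit sum by $q_1-1$, $0$, $-(q_1-1)$ respectively, with no carry leaving the triple. Hence the block of $L_1$ opposite the top block of $d_j$ (the one $d_{j-1}$ lacks) can realise any multiple of $q_1-1$ in a window of size $\asymp\beta$, without affecting $d_{j'}$ for $j'<j$. Choosing these blocks inductively in $j$ gives $a\bigl(s_{q_1}(L_1+d_j)-s_{q_1}(L_1)\bigr)=jm+b\bigl(s_{q_2}(L_2+d_j)-s_{q_2}(L_2)\bigr)$ for all admissible $j$.

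With your choice $L_1=0$, the base-$q_1$ change is rigidly $s_{q_1}(d_j)$. Every attempt to adjust it through $d_j$ feeds back into base $q_2$, which is why your argument stalls.
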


Instead of proving properties two and three directly, we prove more general statements about the sum of digits functions in arithmetic progressions. Heuristically, we expect that arithmetic progressions equidistribute modulo $m$, but to find the expected value is more nuanced.
Let $d$ be an integer and let $d'=d/q^e$, where $e=\nu_q(d)$. We want to find the expected value of the sum of digits function for numbers of the form
\[
    \{nd:n<N\}.
\]
We see that the last $e$ digits of each number in the set above are 0. However, for the remaining digits, we expect that there is no bias towards larger or smaller digits and hence, they are on average $(q-1)/2$ if we consider the base $q$ expansion. Therefore, we expect that
\begin{equation}
	\mathbb{E}[s_q(nd)] = \frac{q-1}{2}(\log_q(Nd)-e) = \frac{q-1}{2}(\log_q(N) + \log_q(d')).
\end{equation}
More generally if we consider numbers of the form
\[
\{a+nd:n<N\},
\]
this heuristic gives that
\begin{equation}
	\mathbb{E}[s_q(nd+a)] = \frac{q-1}{2}\big(\log_q(N) + \log_q(d')\big) +s_q^{[0,e)}(a).
\end{equation}

We prove that the heuristic holds true for the following class of positive integers $d$:

\begin{definition}[$N$-admissibility]\label{def_admissibility}
	Let $d>0$ and $q>1$ be integers. Let $e$ be the maximal integer such that $d=q^ed'$. We say that $d$ is $N$-admissible in base $q$ if for $1\leq A \leq N$, we have for all $\kappa\in \NN$ such that $q^{\kappa}\leq d'$
	\[
	\left|\left|\frac{Ad'}{q^{\kappa}}\right|\right|\geq N^{-3}.
	\]
\end{definition}
\begin{remark}
	This definition ensures that we do not have any block of just digits $0$ or of just digits $\mathrel{{q}{-}{1}}$ of length larger than $2\log_q(N)$ in the digit expansion of $Ad$.
\end{remark}
We now show that this condition is generic.
\begin{lemma}
	 Let $N\in \NN$ and $H$ be an arbitrarily large constant. We have that the number of integers $d<N^H$ that are not $N$-admissible is at most 
	 \[
	 	O\left(N^{H-1}\log(N^H)^2\right).
	 \]
\end{lemma}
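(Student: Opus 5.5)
The plan is a union bound over the parameters appearing in Definition~\ref{def_admissibility}: the exponent $e$, the multiplier $A\leq N$ and the level $\kappa$. Write $d=q^e d'$ with $q\nmid d'$. Since $d<N^H$, there are at most $\log_q(N^H)+1$ choices of $e$, and for each $e$ at most $\log_q(N^H)+1$ relevant $\kappa$ with $q^\kappa\leq d'$. These two factors should produce the $\log(N^H)^2$ in the bound. It then suffices to show, for fixed $e,\kappa$, that
\[
\#\Bigl\{d'<N^H q^{-e}:\ \exists\, A\leq N \text{ with } \Bigl\|\frac{Ad'}{q^\kappa}\Bigr\|<N^{-3}\Bigr\}\ll N^{H-1}.
\]

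For fixed $A$ and $\kappa$, the condition depends only on $d' \bmod q^\kappa$. It says that $Ad'\equiv t \bmod q^\kappa$ with $|t|<q^\kappa N^{-3}$. Put $g=\gcd(A,q^\kappa)$. As $d'$ runs over a full residue system modulo $q^\kappa$, the product $Ad'$ runs over the multiples of $g$, each hit exactly $g$ times. Hence the number of bad residues is at most $g\bigl(2q^\kappa N^{-3}/g+1\bigr)=2q^\kappa N^{-3}+g$. Because $q^\kappa\leq d'$, the range of $d'$ consists of at least one full period, so the proportion of bad $d'$ is at most
\[
O\bigl(N^{-3}+g\,q^{-\kappa}\bigr)\leq O\bigl(N^{-3}+N q^{-\kappa}\bigr).
\]
Summing over $A\leq N$ gives a proportion of $O(N^{-2}+N^2q^{-\kappa})$. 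For $q^\kappa\geq N^{3}$ this is $O(N^{-1})$, and multiplying by $N^H$ and the logarithmic number of choices of $(e,\kappa)$ gives the claimed $O\bigl(N^{H-1}\log(N^H)^2\bigr)$. The count is in fact a little better, since the $N^{-3}$ term contributes only $N^{-2}$.

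The main obstacle is the range of small $\kappa$, say $q^\kappa<N^3$. There the condition $\|Ad'/q^\kappa\|<N^{-3}$ is equivalent to $Ad'/q^\kappa\in\ZZ$, because a fraction with denominator $q^\kappa$ is either an integer or at distance at least $q^{-\kappa}>N^{-3}$ from $\ZZ$. Taken literally, this exact divisibility occurs for trivial reasons, for example $A=q^\kappa\leq N$. So the definition cannot be meant to include these exact cases, and I would first pin down the intended reading: either $\kappa$ is restricted to $q^\kappa>N^{3}$ (equivalently $q^\kappa>AN^3$ up to constants), or only nonzero distances are required to exceed $N^{-3}$. This matches the remark after the definition, which concerns long blocks of digits $0$ or $q-1$ in $Ad$, and such blocks can only be forced by large $\kappa$. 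Under that reading the small-$\kappa$ range imposes no condition, and the large-$\kappa$ computation above completes the proof.
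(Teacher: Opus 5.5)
Your proposal is correct and is essentially the paper's argument. Both are a union bound over $A\leq N$ and over the level $\kappa$ (the paper's block position $i$), together with a logarithmic number of size classes of $d$, where each event has density $O(A/N^{3})$. The difference is that you count the bad residues of $d'$ modulo $q^\kappa$ exactly via $\gcd(A,q^\kappa)$, whereas the paper argues with random digit strings and the preimages $x/A$.

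Your observation that the definition cannot be meant literally is also right: $A=q$, $\kappa=1$ gives $\|Ad'/q^\kappa\|=0$ for every $d'\geq q$, so almost all $d$ would fail. The paper sidesteps this silently by passing to the long-block reading of the remark after Definition~\ref{def_admissibility}, which amounts to the same reinterpretation you propose.
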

\begin{proof}
	By the above remark, we see that a number is not $N$-admissible if and only if it contains a string of $2\log N$ many consecutive $0$ or $q-1$. Let $\kappa$ be the minimal integer such that $N<q^{\kappa}$. We consider all integers $d$ in the interval $N^H/q<d<N^H$ as a string of $\kappa$ digits over the alphabet $\{0,\dots, q-1\}$. 
	Let $1\leq i \leq \kappa$ be an integer. We see that the probability that $d$ has only $0$ or has only $q-1$ at all the positions $i+1,\dots,i+2\kappa$ is
	\[
	\frac 2{q^{3 \kappa}} < \frac 2{N^{3}}
	\]
	by observing that there are $q^{2 \kappa}$ many different strings of length $2 \kappa$, only 2 of which are allowed. Similarly, the probability that $Ad$ as only $0$ or has only $q-1$ at all the positions $i+1,\dots,i+3\kappa$ is
	\[
		\frac {2A}{q^{3 \kappa}} < \frac {2A}{N^{3}}
	\]	
	by observing that the strings are not allowed to agree with the first $3\kappa$ digits of  $x/A$ for some $1\leq x\leq A$. Since, this holds for every $A$ and for every $i\leq H\kappa$, we have that at most
	\[
		\frac {2H\kappa}{N^{3}} \sum_{A\leq N}A \leq \frac{2H\kappa}{N}
	\]
	many $d\in [N^H/q,N^H]$ that are not $N$-admissible. This analysis can be done for any interval $[N^H/q^a,N^H/q^{a-1}]$ and we get by the union bound that there are at most
	\[
		N^H\sum_{a=0}^{\log_qN^H} \frac {2(H\kappa-a)}{N} < N^{H-1}\log_qN^H\cdot2H\kappa \ll N^{H-1}\log(N^H)^2 = o(N^H)
	\]
	many $d<N^H$ that are not $N$-admissible.
\end{proof}

Having now defined $N$-admissibility, we can state the next two propositions.

\begin{proposition}[Concentration around the expected value]\label{thm_concentration}
	Let $H$ be any positive constant, and let $q>1$ be a positive integer. Let $\eta<1/5$ a positive real number. Further let $N\in \NN$ be sufficiently large. Let $d<N^H$ be an $N^{\eta}$-admissible integer. Then for all $N<k\leq2N$ except at most $O\left(N\log(N)^{-A}\right)$-many the following inequality holds.
	\[
		\left|s_q(a+dk)-\frac{q-1}2\log_{q}\left(N\right) - \frac{q-1}2\log_{q}(d')-s_q^{[0,e)}(a)\right|\leq Jm,
	\]
	where $d=q^ed'$ as above.
\end{proposition}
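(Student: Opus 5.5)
We aim to show that, for all but $O(N(\log N)^{-A})$ of the $k\in(N,2N]$, $s_q(a+dk)$ deviates from its heuristic mean by at most $Jm\asymp \sqrt{\log N}\cdot\sqrt{J}$. This window exceeds the standard deviation $\asymp\sqrt{\log N}$ by the growing factor $\sqrt J\asymp(\log\log N)^{(1+\varepsilon)/2}$. So the natural route is a moment or large-deviation bound: a Gaussian-type tail at $\sqrt{J}$ standard deviations gives exceptional density $\exp(-cJ)$. With $J=\lfloor\varepsilon(\log\log N)^{1+\varepsilon}\rfloor$ this is $(\log N)^{-c\varepsilon(\log\log N)^{\varepsilon}}$, which beats any fixed power $(\log N)^{-A}$.

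\emph{Step 1: splitting the digits.} Write $a+dk$ in base $q$. The lowest $e$ digits equal those of $a \bmod q^e$, up to a carry that we absorb. Set $n'=\lfloor (a+dk)/q^e\rfloor = a'+d'k$ with $a'=\lfloor a/q^e\rfloor$, so that $s_q(a+dk)=s_q^{[0,e)}(a)+s_q(a'+d'k)$. We then split the digits of $a'+d'k$, which lie in positions $[0,\log_q(Nd'))$, into blocks. The top $\approx\log_q N$ positions and the middle $\approx\log_q d'$ positions together should each contribute mean $\frac{q-1}{2}$ per digit. We discard a boundary layer of length $O(\log\log N)$ around the top, which costs only $O(\log\log N)\ll Jm$.

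\emph{Step 2: the $r$-th central moment via exponential sums.} Fix an even $r\asymp J$. We bound
\[
\frac1N\sum_{N<k\leq 2N}\Bigl(s_q(a'+d'k)-\mu\Bigr)^{r},
\]
where $\mu$ is the expected value. Write $s_q(n)=\sum_i \delta_i(n)$ and $\delta_i(n)-\frac{q-1}{2}=\sum_{0<h<q}c_h\,\e\!\left(hn/q^{i+1}\right)+\dots$, a truncated Fourier expansion of the digit function with a smoothing error controlled by the distance of $n/q^{i+1}$ to the lattice. Expanding the $r$-th power gives sums over tuples $(i_1,\dots,i_r)$ of averages
\[
\frac1N\sum_{k}\e\Bigl(k d'\sum_t h_t q^{-i_t-1}\Bigr).
\]
Each such average is $\ll \min\bigl(1,\,N^{-1}\|d'\sum_t h_tq^{-i_t-1}\|^{-1}\bigr)$. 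The quantity $\sum_t h_tq^{-i_t-1}$ is a rational $A/q^{\kappa}$ with $|A|\leq r q^{r}\cdot q^{\kappa}$. It is here that $N^\eta$-admissibility (Definition~\ref{def_admissibility}) is used. For tuples whose largest index $\kappa$ satisfies $q^\kappa\leq d'$, admissibility gives $\|Ad'/q^\kappa\|\geq N^{-3\eta}$. This holds provided $A$ can be reduced to a numerator at most $N^\eta$, which happens once the exponents are grouped so that only the top $O(\eta\log_q N)$ spread matters. Since $\eta<1/5$, the sum is then $\ll N^{-1+3\eta}$, which is negligible. Tuples that are not ``paired'' (some frequency not cancelled) therefore contribute negligibly. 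Paired tuples reproduce the moments of a sum of independent uniform digits, giving the bound $(Cr\log N)^{r/2}$.

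\emph{Step 3: conclusion.} Markov's inequality with $r\asymp J$ gives exceptional proportion at most $(Cr\log N)^{r/2}/(Jm)^{r}\leq (C'/\!\sqrt{J}\,)^{r}\cdot O(1)^r$. After adjusting constants, and using $Jm\asymp\sqrt{J\log N}$, this is at most $e^{-cJ}\ll(\log N)^{-A}$. The smoothing errors at digit boundaries are handled by the standard device of counting $k$ for which $a'+d'k$ lies within $N^{-1}$ (relative) of a multiple of $q^{i+1}$; admissibility again controls these counts.

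The main obstacle is Step 2 for the positions between $\log_q N$ and $\log_q d'$ when $d'$ is huge (up to $N^H$). There the frequencies $h/q^{i+1}$ are tiny and the relevant combined numerators need not be bounded by $N^\eta$. We would handle this first by noting that the digits at position $i\geq\log_q N$ of $d'k$ behave like digits of $k\cdot(d'/q^{i})$, essentially a rotation by the fractional part $\{d'/q^{i}\}$. Admissibility guarantees that this rotation is Diophantinely generic at scale $N^{\eta}$. We would then prove independence block by block, using windows of length $\approx\eta\log_q N$, and take a union bound over the $O(\log N)$ blocks.
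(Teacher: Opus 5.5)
Your Step~2 has a genuine gap. $N^{\eta}$-admissibility only gives $\|Ad'/q^{\kappa}\|\geq N^{-3\eta}$ for $1\leq A\leq N^{\eta}$. In your expansion of the $r$-th moment, this controls only frequency tuples whose positions $i_t$ all lie in one window of length about $\eta\log_q N$. A typical tuple has positions spread over all of $[\log_q N,\log_q d')$, and then $A=\sum_t h_tq^{\kappa-i_t-1}$ can be as large as $N^{H}$. Admissibility says nothing about such $A$. For instance, it does not exclude $\|d'(h_1q^{-i_1-1}+h_2q^{-i_2-1})\|<N^{-2}$ for two far-apart positions $i_1<i_2$. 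That would only mean a long block of $0$'s or $(q-1)$'s in $A'd'$, where $A'=h_1q^{i_2-i_1}+h_2$ is huge. Nothing makes ``only the top $O(\eta\log_q N)$ spread matter''. The global bound $(Cr\log N)^{r/2}$ amounts to approximate joint independence of all digit windows of $a'+d'k$, and it does not follow from the hypothesis, so Step~3 has nothing to act on. There are also secondary issues. The digit function has Fourier coefficients of size $\asymp 1/|h|$ at every $h\not\equiv 0\bmod q$, so smoothing errors must be controlled inside a moment of growing order $r\asymp J$. And in Step~3 the ratio is $(Cr/J)^{r/2}$, so you need $r=c_0J$ with $c_0$ small.

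The missing idea is that no independence between windows is needed. Since $Jm\asymp\sqrt{J\log N}$, the window is $\sqrt J\to\infty$ standard deviations wide. So you can cut the digits of $a'+d'k$ into a \emph{bounded} number $C=O(H/\eta)$ of blocks:
the lowest $\frac23\log N$ digits;
middle blocks of length between $\frac\eta2\log N$ and $\eta\log N$, up to $\kappa\approx\log_q d'$;
and the top part $\lfloor d'k/q^{\kappa}+\sigma\rfloor$.
For each block you only need that every block value is attained by $O(N/q^{\mathrm{length}})$ values of $k$. For the bottom block, this is the statement that $a'+d'k$ hits each residue modulo $q^{\frac23\log N}$ only $O(N^{1/3})$ times. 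For the top, $\lfloor d'k/q^{\kappa}+\sigma\rfloor$ takes each value at most $q$ times. For the middle blocks, it is exactly what admissibility yields via Lemma~\ref{odd-elimination}, which is where $\eta<1/5$ is used. Hoeffding for uniform digit strings (Lemma~\ref{lem_bound_on_exception}) with tolerance $Jm/C$ then bounds each block's exceptional set by $O(N(\log N)^{-A})$, since $Jm/C$ still exceeds each block's standard deviation by a growing factor. The triangle inequality plus a union bound over the $C$ blocks finishes the proof.

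Your last paragraph points in this direction, with two corrections. The block count is $O(H/\eta)$, not $O(\log N)$. With $\asymp\log N$ blocks, the per-block tolerance $Jm/\log N$ would be far below a block's standard deviation, and the triangle-inequality combination would fail. And what each block needs is only marginal near-uniformity, not independence from the other blocks.
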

\begin{remark}
	Note that, as long as $d<N^{1/2}$, we can even drop the admissibility assumption.
\end{remark}

\begin{proposition}[Equidistribution modulo $m$]\label{prop_equi}
	Let $H$ be any positive constant  and let $\eta>0$ be a sufficiently small constant. Further, let $N$ be a sufficiently large integer and let $q_1,\dots, q_t$ be a collection of $t$ multiplicatively independent integers. Let $d_i$, $a_i$, $\ell_i$, $m_i$ each be a collection of $t$ positive integers, such that for each $1\leq i \leq t$
	\begin{enumerate}
		\item $d_i<N^H$ is $N^{\eta}$-admissible in base $q_i$,
		\item $a_i<d_i$,
		\item $m_i \leq (\log N)^{1/2}/(\log\log N)^{1/2+\varepsilon}$ and $\gcd(q_i-1,m_i)=1$,
		\item $\ell_i<m_i$.
	\end{enumerate}
	Then we have that
	\begin{equation}\label{eq_prop_size}
		\#\left\{ n\in[N,2N]: s_{q_i}(d_in+a_i)\equiv \ell_i\bmod m_i \; \mathrm{for}\; \mathrm{ all } \; 1\leq i \leq t
		\right\} = (1+o(1))\frac{N}{\prod_{i=1}^{t}m_i}.
	\end{equation}
\end{proposition}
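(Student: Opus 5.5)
We will prove Proposition \ref{prop_equi} by detecting the congruences with additive characters and bounding the resulting exponential sums. Orthogonality gives
\[
\#\{\cdots\}=\frac{1}{\prod_i m_i}\sum_{h_1=0}^{m_1-1}\cdots\sum_{h_t=0}^{m_t-1}\mathrm{e}\Bigl(-\sum_i \tfrac{h_i\ell_i}{m_i}\Bigr)\sum_{N\leq n<2N}\prod_{i=1}^t \mathrm{e}\Bigl(\tfrac{h_i}{m_i}s_{q_i}(d_in+a_i)\Bigr).
\]
The term $h=0$ gives the main term $N/\prod m_i$. It therefore suffices to show that for every nonzero tuple $(h_1,\dots,h_t)$ the inner sum is $o(N/\prod m_i)$. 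Since $\prod m_i\leq (\log N)^{t/2}$, it is enough to obtain a saving of a large power of $\log N$, uniformly in $h$.

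\emph{Step 1: truncation.} We replace $s_{q_i}(d_in+a_i)$ by the digit sum restricted to a window $I_i$ of digit positions of length about $\log_{q_i} N$, sitting above the $q_i$-adic valuation of $d_i$. The $N^{\eta}$-admissibility of $d_i$ will be used here. The low digits depend only on $n$ modulo a small power of $q_i$. The carry from low digits into the window propagates only a bounded distance outside a set of $n$ of density $N^{-c\eta}$, because admissibility forbids long runs of $0$ or $q_i-1$ in $Ad_i'$. The digits above the window depend only on $\lfloor (d_in+a_i)/q_i^{\max I_i}\rfloor$, which is nearly constant on short intervals of $n$. Splitting $[N,2N]$ into short intervals and residue classes thus reduces the problem to sums of products of truncated digital functions $\prod_i g_i(n)$. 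Each $g_i(n)=\mathrm{e}\bigl(\tfrac{h_i}{m_i}s^{I_i}_{q_i}(d_in+a_i)\bigr)$ is $q_i^{|I_i|}$-periodic in $d_in+a_i$, hence periodic in $n$ after fixing residues.

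\emph{Step 2: Fourier estimates.} For each nonzero $h_i$ we expand $g_i$ in its discrete Fourier series. The Fourier coefficients factor as products over digits of $\frac1{q_i}\sum_{\delta<q_i}\mathrm{e}(h_i\delta/m_i-\delta\theta)$. Because $\gcd(q_i-1,m_i)=1$, the value $h_i/m_i$ is not close to an integer multiple compatible with a degenerate digit distribution. Each factor then has modulus at most $1-c\|h_i/m_i\|^2\leq 1-c/m_i^2$. This gives an $L^\infty$ bound of $q_i^{-c|I_i|/m_i^2}$ on the coefficients, and an $L^1$ bound of $q_i^{(1/2-c')|I_i|}$. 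Over $|I_i|\asymp\log N$ positions, the saving is $\exp(-c\log N/m_i^2)\geq\exp(-c(\log\log N)^{1+2\varepsilon})$. This is exactly why $m_i$ is capped at $(\log N)^{1/2}/(\log\log N)^{1/2+\varepsilon}$: the saving then beats every power of $\log N$. If only one $h_i$ is nonzero, the sum reduces to a digit-sum sum along an arithmetic progression. Admissibility of $d_i$ prevents $d_i\theta$ from concentrating near rationals with small $q_i$-power denominators, and the $L^\infty$ bound finishes this case.

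\emph{Step 3: several nonzero $h_i$ (main obstacle).} Here we must decorrelate digit functions in multiplicatively independent bases along the same progression. After Fourier expansion the sum becomes a sum over frequencies $(k_1,\dots,k_t)$ of coefficient products times
\[
\sum_n \mathrm{e}\Bigl(n\sum_i \tfrac{k_id_i}{q_i^{\,|I_i|+\cdot}}\Bigr).
\]
This geometric sum is small unless $\sum_i k_id_i/q_i^{L_i}$ is within $1/N$ of an integer. We would apply the $L^\infty$ bound to one factor and $L^1$ bounds to the rest, and count the near-resonant frequency tuples. This count must use the multiplicative independence of the $q_i$, via a Baker-type or Diophantine argument showing that $q_1^{L_1},\dots,q_t^{L_t}$ cannot share large common structure. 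It must also use the admissibility of the $d_i$ to exclude accidental resonances. If the counting is too lossy, the fallback is a van der Corput / Cauchy–Schwarz step in the style of the Mauduit–Rivat method to isolate one base, followed by the single-base bound of Step 2.

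Collecting the estimates, the contribution of each nonzero $h$ is $\ll N(\log N)^{-A}$ for any $A$. Summing over the at most $(\log N)^{t/2}$ tuples $h$ yields the claimed asymptotic $(1+o(1))N/\prod m_i$.
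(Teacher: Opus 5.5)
Your orthogonality-of-characters reduction matches the paper, but after that there are two genuine gaps.

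\textbf{Steps~1--2 fail for large $d_i$.} They break down in exactly the regime the proposition is designed for, namely $d_i$ a large power of $N$. The paper stresses arbitrarily large step size, and in the proof of Theorem~\ref{main_thm_indep} the progression has about $N^{\varepsilon'}$ terms and step about $N^{1-\varepsilon}$. A window of length $\asymp\log_{q_i}N$ leaves about $\log_{q_i}d_i'$ varying digit positions of $d_in+a_i$ outside it. With your placement, the digits above the window are those of $\lfloor (d_in+a_i)/q_i^{\max I_i}\rfloor\approx d_i'n/N$. This quantity moves by about $d_i'/N$ per unit step of $n$. Once $d_i'\geq N$ it is constant on no interval, and its lowest $\approx\log_{q_i}(d_i'/N)$ digits depend on $n$ as intricately as the window does. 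Admissibility does not repair this: it constrains only the multiples $Ad_i'$ with $A\leq N^{\eta}$, not $d_i'n+a_i$ for all $n\leq 2N$. Without a valid truncation, the Fourier expansion of Step~2 needs a period $q_i^{L}\gg d_i'N$. Then of order $d_i'$ frequencies are resonant, and the $L^\infty$ bound gives only $(d_i'N)^{1-c/m_i^2}$ up to logarithms, which beats the trivial bound only if $d_i'\leq N^{O(1/m_i^2)}$. The paper removes these digits by differencing instead. It applies the iterated van der Corput inequality (Lemma~\ref{interated_van_der_corput}) with $M_1=q_1^a$ for the top digits, via Lemma~\ref{lem_carries}. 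It also uses shifts $M_\ell\equiv 0\bmod q_1^a$ supplied by Lemma~\ref{odd-elimination}, satisfying $(M_\ell d_i)^{[y_\ell,x_\ell)}=1$. Outside a small exceptional set, the shift by $r_\ell M_\ell d_i$ leaves the block $[y_\ell+af_i,x_\ell)$ unchanged. That block therefore cancels between $\varepsilon_\ell=0$ and $\varepsilon_\ell=1$ in the $2^Q$-fold conjugated product. This, not carry control, is where admissibility is used.

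\textbf{Step~3 is left open.} You rightly call it the main obstacle, but the route you sketch fails as stated. With the bounds you list, the $\ell^\infty\times\ell^1$ estimate is $N^{1-c/m_1^2}\cdot N^{(1/2-c')(t-1)}$, worse than trivial. No better $\ell^1$ bound rescues it when the savings in different bases are comparable, since $1=\sum_k|\hat g_i(k)|^2\leq\max_k|\hat g_i(k)|\sum_k|\hat g_i(k)|$. Baker-type input also targets the wrong obstruction. The difficulty is arithmetic: multiplicatively independent bases may share prime factors (e.g.\ $6$ and $10$), so the periods $q_i^{L_i}$ are not coprime and the Chinese remainder theorem does not separate them. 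The paper uses Lemma~\ref{small_gcd}, which uses $p$-adic valuations to find an index $j$ with $\gcd(q_j^{a_j},\lcm_{i\neq j}q_i^{a_i})<N^{1-\varepsilon}$. Corollary~\ref{cor_independence} turns this into asymptotic independence between the base-$q_j$ digits above a suitable level and the truncated digit sums in the other bases. Those other factors are then bounded trivially. After one more Cauchy--Schwarz, the base-$q_j$ part becomes a Gowers uniformity norm of $\mathrm{e}(\theta_j s_{q_j}^{[0,\rho)}(\cdot))$. This is controlled by the Gowers-norm estimate of \cite{Jelinek2025}, not by the exponential-sum bound of your Step~2, which does not apply to a $2^{Q+1}$-fold product. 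So your ``fallback'' is really the whole proof, and its three ingredients are missing: the choice of the shifts, the separation of the bases, and a Gowers-norm bound for digital functions.
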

\begin{remark}
	It can be shown that $\eta<\min(1/5,1/t)$ is sufficiently small.
\end{remark}

In Section \ref{sec_proof_main_theorem} we find an arithmetic progression $dn+L$ along which $\mathbb{E}(f(n))=0$ and prove that $d$ is $N^{\eta}$-admissible for every $\eta>0$ and for sufficiently large $N$. We then combine the three propositions above to conclude the proof of Theorem \ref{main_thm_indep}.
\section{Preliminaries} \label{sec_Prelim}
This section is a collection of results and methods which are used at various stages throughout the paper. The first lemma below will be a crucial input in the proof of Proposition \ref{prop_shift}. 
\begin{lemma}\label{Addition}
	Given any base $q$ and some fixed $k$, the following holds for any nonzero numbers $d_1, \dots, d_k<q^{\nu-1}$:
	\[
	\#\{0\leq n < q^{\nu-1}: |s_q(n)-s_q(n+d_i)|<\lambda^{1/2} \text{ for all } i\leq k\}\geq q^{\nu-1}\left(1-O\left(\frac{qk\nu}{\lambda}\right)\right)
	\]
\end{lemma}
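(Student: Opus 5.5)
By a union bound over $i\le k$, it suffices to show that for a single nonzero $d<q^{\nu-1}$
\[
\#\{0\leq n<q^{\nu-1}: |s_q(n+d)-s_q(n)|\geq \lambda^{1/2}\}\ll \frac{q\nu}{\lambda}\,q^{\nu-1}.
\]
Summing this over $i$ then gives the stated bound $q^{\nu-1}\bigl(1-O(qk\nu/\lambda)\bigr)$. The tool is the carry identity
\[
s_q(n+d)=s_q(n)+s_q(d)-(q-1)c(n,d),
\]
where $c(n,d)$ is the number of carries produced when $d$ is added to $n$ in base $q$. From this identity,
\[
s_q(n+d)-s_q(n)=s_q(d)-(q-1)c(n,d).
\]
The plan is to view $n$ as uniformly random in $[0,q^{\nu-1})$, i.e.\ as having i.i.d.\ uniform digits $\delta_0,\dots,\delta_{\nu-2}$, and to prove
\[
\operatorname{Var}\bigl(s_q(n+d)-s_q(n)\bigr)\ll q\nu .
\]
Chebyshev's inequality then bounds the bad set by $\operatorname{Var}/\lambda\ll q\nu/\lambda$, as required. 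Centring by the mean is unnecessary: since $\mathbb E[s_q(n+d)]$ and $\mathbb E[s_q(n)]$ differ by at most $O(q)$ (the top carry may leave $[0,q^{\nu-1})$, adding one digit), the second moment $\mathbb E[(s_q(n+d)-s_q(n))^2]$ is also $\ll q\nu+q^2$, which suffices after absorbing constants (we may assume $q\nu/\lambda$ small, otherwise the claim is trivial).

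\emph{Variance bound.} Write $s_q(n+d)-s_q(n)=\sum_{j}X_j$, where
\[
X_j=\delta_j(n+d)-\delta_j(n)=d_j+c_{j}-q\,c_{j+1}
\]
is the change in the $j$-th digit. Here $d_j$ is the $j$-th digit of $d$ and $c_j\in\{0,1\}$ is the carry into position $j$. Each $|X_j|\le q$. The carry process is a Markov chain driven by the independent digits $\delta_j$: given $c_j$, we have $c_{j+1}=1$ if and only if $\delta_j+d_j+c_j\ge q$.

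\emph{Key step: exponential decorrelation.} Whatever the incoming carry, with probability at least $1/q$ the digit $\delta_j$ forces $c_{j+1}$ to a value independent of $c_j$. If $d_j\le q-2$, the choice $\delta_j=0$ forces $c_{j+1}=0$; if $d_j=q-1$, the choice $\delta_j=q-1$ forces $c_{j+1}=1$. Hence, conditioned on the carry at position $j$, the law of $c_{j+t}$ depends on it only with total variation at most $(1-1/q)^t$, and so
\[
|\operatorname{Cov}(X_j,X_{j+t})|\ll q^2(1-1/q)^{t}.
\]
Summing over $t$ gives $\sum_t|\operatorname{Cov}(X_j,X_{j+t})|\ll q^3$ per position. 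That yields $\operatorname{Var}\ll q^3\nu$, which is weaker than the claimed $q\nu$. Since $q$ is fixed, the powers of $q$ are harmless constants and the lemma with $O(qk\nu/\lambda)$ holds, with implied constant depending on $q$. If the sharper $q$-dependence is wanted, one uses instead that $\Pr(X_j\ne d_j-(q-1)\cdot\text{stuff})$ is controlled, but I would not pursue it.

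\emph{Main obstacle.} The step I expect to need the most care is making the decorrelation rigorous. I would do it with a coupling argument: run two copies of the carry chain from different states at position $j$ using the same digits $\delta$. Once a forcing digit occurs, the two copies coalesce, and a forcing digit fails to occur within $t$ steps with probability at most $(1-1/q)^t$. The boundary position $\nu-1$, where a final carry may overflow, contributes only $O(q)$ to the sum and is absorbed into the constants.
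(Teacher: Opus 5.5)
Your proof is correct. It shares the paper's skeleton: a second-moment bound linear in $\nu$, Chebyshev's inequality, and a union bound over the $k$ shifts. Where you differ is in how you obtain the second-moment bound.

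\textbf{The paper's route.} The paper works with the truncated digit sum $s_q^{(\nu)}$, with $n$ running over all residues modulo $q^\nu$. There $n\mapsto n+t$ is a bijection, so $s_q^{(\nu)}(n+t)-s_q^{(\nu)}(n)$ has mean exactly $0$. Conditioning on the lowest digit of $n$ then gives the exact recursion $m_2(qt+a,L+1)=\frac{(q-a)m_2(t,L)+a\,m_2(t+1,L)}{q}+a(q-a)$. Each digit therefore adds at most $q^2/4$, and no cross terms ever appear; in effect this is an orthogonal martingale decomposition along the digits. Since $n+d_i<q^\nu$ for $n<q^{\nu-1}$, nothing is truncated. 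The factor $q$ in $O(qk\nu/\lambda)$ is just the price of restricting the exceptional set from $[0,q^\nu)$ to $[0,q^{\nu-1})$.

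\textbf{Your route.} You work directly on $[0,q^{\nu-1})$ and absorb the overflow carry into a bounded boundary term. You then control the cross-covariances of the digit changes $X_j$ by coalescence of the carry chain. This is more robust: it does not rely on an exact mean-zero identity, and it would survive non-uniform digit distributions. The cost is extra powers of $q$. That is harmless, because the constant hidden in the paper's $O$ also depends on $q$ (it is $4q^4$).

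\textbf{A sharper coupling.} Your coupling estimate is much weaker than the truth. From either carry state, the two copies produce the same next carry unless $\delta_j+d_j=q-1$, and that event has probability exactly $1/q$. So the copies fail to coalesce within $t$ steps with probability $q^{-t}$, not merely $(1-1/q)^t$. This gives $\operatorname{Var}\ll q^2\nu$ directly, with no need for the vague refinement you mention at the end.
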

\begin{proof}
	First observe that since $n,d_1, \dots, d_k<q^{\nu-1}$, we have the following identity  for all $i$:
	\[
	s_q(q^{\nu}m+n)-s_q(q^{\nu}m+n+d_i)=s_q^{(\nu)}(n)-s_q^{(\nu)}(n+d_i).
	\]
	Therefore, it will be enough to study $s_q^{(\nu)}(\cdot)$. We proceed via analysing the probability mass function $\varphi(\_,t,L)$:
	\begin{equation}
		\varphi(j,t,L):=\frac{1}{q^{L}}\#\{0\leq n < q^{L}:s_{q}^{(L)}(n+t)-s_{q}^{(L)}(n)=j\}
	\end{equation}
	and the characteristic function
	\begin{equation*}
		\omega_t(\theta, L):=\sum_{j\in\mathbb{Z}}\varphi(j,t,L)\e(j\theta)
	\end{equation*}
	of the change of the sum of digits function under addition.
	It is easy to prove the following recursions for the probability mass function for any $0\leq a<q$:
	\begin{align}
		\varphi(j,a,L)&=\begin{cases}
			\frac{q-a}{q} & \text{if } j=a\\
			\frac{a(q-1)}{q^{k+1}}	& \text{if } j=a+k(1-q) \text{ for } 1\leq k \leq L-1\\
			\frac{a}{q^L}	& \text{if } j= (L-1)(1-q)+a-q\\
			0 & \text{otherwise }
		\end{cases}\\
		\varphi(j,qt,L+1)&=\varphi(j,t,L)\\
		\varphi(j,qt+a,L+1)&=\frac{q-a}{q}\varphi(j-a,t,L)+\frac{a}{q}\varphi(j+q-a,t+1,L)
	\end{align}
    Similarly, for the characteristic function we have that:
	\begin{align}
		|\omega_t(\theta,L)|&\leq 1\\
		\omega_{qt}(\theta,L+1)&=\omega_t(\theta,L)\\
		\omega_{qt+a}(\theta,L+1)&=\frac{(q-a)\e(a\theta)}{q}\omega_t(\theta,L)+\frac{a\e((q-a)\theta)}{q}\omega_{t+1}(\theta,L)
	\end{align}
	Furthermore, we can obtain the $k$th-moment $m_k$ of the probability mass function via the identity
	\begin{equation}
		m_k(t,L)=\frac{k!}{(2\pi i)^k}[\theta^k]\omega_t(\theta,L).
	\end{equation}
    We now want to find $m_0(t,L)$, $m_1(t,L)$ and $m_2(t,L)$. By observation, we have that $m_0(t,L)= 1$, $m_1(t,L)=0$ (since the expectation of the probability mass function is 0). For the second moment we have for $a<q$:
	\begin{align*}
		m_2(a,L)&=(q-a)a^2/q+\sum_{k=1}^{L-1}a(q-1)(a+k(1-q))^2/q^{k+1}+a((L-1)(1-q)+a-q)^2/q^L\\
		&\leq 4q^4
	\end{align*}
Furthermore, using the recurrence relation we get that
	\begin{align*}
		m_k(qt,L+1)&=m_k(t,L)\\
		m_2(qt+a,L+1)&=\frac{(q-a)m_2(t,L)+am_2(t+1,L)}{q}+a(q-a).
	\end{align*}
Hence, we can bound the variance above by
	\begin{equation}
		m_2(t,\nu)\leq 4q^4\nu.
	\end{equation}
	By Chebyshev's inequality, we get that the number of integers $a\in\{0,\dots,q^{\nu}-1\}$ such that
	\begin{equation}\label{eq_Chebyshev's}
		\left|s_{q}^{(\nu)}(a+d_j)-s_{q}^{(\nu)}(a)\right|\leq R_{q}(4q^4\nu)^{1/2}
	\end{equation}
	is bounded below by $q^{\nu}(1-1/R_q^2)$.\\
	Intersecting $k$ sets, each of them corresponding to one $d_j$, we obtain that the number of $n$ that satisfy equation \ref{eq_Chebyshev's} for each $d_j$ is bounded below by $ q^{\nu}(1-k/R_2^2)$. We set $R_q^2=\lambda/(4q^4\nu)$ and get
	\begin{equation}
		\left|s_{q}^{(\nu)}(a+d_j)-s_{q}^{(\nu)}(a)\right|\leq \lambda^{1/2}
	\end{equation}
	for at least
	\begin{equation}
		q^{\nu}\left(1-O\left(\frac{k\nu}{\lambda}\right)\right).
	\end{equation}
	Therefore, if we restrict $n$ to $n<q^{\nu-1}$, we get that
	\begin{equation}
		q^{\nu-1}\left(1-O\left(\frac{qk\nu}{\lambda}\right)\right)
	\end{equation}
	many $n$ satisfy the desired equation, which proves the lemma.
\end{proof}

Next we prove some general lemmas and properties of $N$-admissible numbers. By applying Schlickewei's $p$-adic subspace theorem, we show that sufficiently large powers of $p$ in base $q$ are indeed also $N$-admissible.

Let us recall the $p$-adic subspace theorem by Schlickewei.
\begin{lemma}[$p$-adic subspace theorem]\label{p-adic-Subspace}
    Let $r\geq N\geq 2$, $C>0$, $\delta>0$ and $S=\{\infty,p_1,\dots,p_r\}$, with $p_i$ distinct primes. Further let $L_{1,\infty},\dots, L_{r,\infty}$ be a linear form in $X_1, \dots, X_n$ with algebraic coefficients in $\CC$ in general positions, and for $1\leq j\leq r$, let $L_{1,p_j},\dots, L_{r,p_j}$ be linear forms in $X_1,\dots, X_n$ with algebraic coefficients in $\overline{\QQ_{p_j}}$ in general position.\\
    Then all integer solutions $x=(x_1,\dots,x_n)$ with $\gcd(x_1,\dots,x_n)=1$ of the inequality
    \[
        \prod_{p\in S}\left|L_{1,p}(x)\dots L_{r,p}(x)\right|\leq C \left|\left| x \right|\right|_{\infty}^{r-n-\delta}
    \]
    are contained in the union of finitely many linear subspaces of $\QQ^n$.
\end{lemma}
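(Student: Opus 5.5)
This lemma is the classical $p$-adic Subspace Theorem of Schlickewei, an extension of W.~M.~Schmidt's Subspace Theorem to finitely many places. I would not reprove it in the paper; I would quote it from the literature (Schlickewei's original paper, or the versions by Evertse and by Bombieri--Gubler). First, though, the statement as typed has to be put in its intended form, since the indices $r$ and $n$ are mixed up. For each place $v\in S$ one takes $n$ linearly independent linear forms $L_{1,v},\dots,L_{n,v}$ in $X_1,\dots,X_n$, with algebraic coefficients in $\CC$ for $v=\infty$ and in $\overline{\QQ_{p_j}}$ for $v=p_j$. The inequality then reads
\[
\prod_{v\in S}\prod_{i=1}^{n}|L_{i,v}(x)|_v\leq C\,\|x\|_\infty^{-\delta}
\]
for primitive $x\in\ZZ^n$, with normalised absolute values $|\cdot|_v$. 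The conclusion is that all solutions lie in finitely many proper subspaces of $\QQ^n$. In this form the application to $N$-admissibility that follows the lemma uses only the case $n=3$ with $S=\{\infty,p\}$ and forms involving $p^{k}$, $q^{\kappa}$ and $A$.

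For completeness, the standard proof has four steps, and I would follow it as presented in Bombieri--Gubler, Chapter~7.
\begin{enumerate}
\item Reduce, by adding and removing forms and rescaling, to the case where for each $v$ the forms $L_{i,v}$ are in general position and have coefficients in a fixed number field $K$.
\item Translate the inequality into a statement about successive minima of a family of adelic convex bodies $\Pi(Q)$, parametrised by a height-like parameter $Q=\|x\|_\infty$. A solution $x$ forces the first minimum $\lambda_1(Q)\leq Q^{-\delta/n}$. Minkowski's second theorem, in its adelic form, then gives $\lambda_n(Q)\gg Q^{\delta'}$ for some $\delta'>0$.
\item Use this gap between $\lambda_{n-1}$ and $\lambda_n$ to attach to each such $Q$ a rational subspace of dimension $n-1$, in the spirit of Schmidt's method. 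Pass to exterior powers so that it suffices to control one-dimensional minima of a compound body.
\item Show that only finitely many subspaces occur. Suppose infinitely many solutions with rapidly growing heights are not covered by finitely many subspaces. Build an auxiliary multihomogeneous polynomial in $m$ blocks of variables with small integer coefficients, via Siegel's lemma, vanishing to high index at the tuple of solutions at every place of $S$. Finally contradict a nonvanishing result: Roth's lemma, or Faltings' product theorem in the form used by Evertse.
\end{enumerate}

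The main obstacle is the last step. It requires uniform control of the index at $p$-adic and archimedean places simultaneously, and the nonvanishing statement must be applied to the tuple of solutions rather than to a single point. This is exactly where Schlickewei's extension goes beyond Schmidt's argument, which is why I would cite it rather than reproduce it. In the paper the lemma should serve only as a black box. Its hypothesis that the forms are in general position is what must be verified in the subsequent application, where it amounts to the multiplicative independence of $p$ and $q$.
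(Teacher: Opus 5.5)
Quoting Schlickewei's theorem as a black box is exactly what the paper does. It only ``recalls'' the lemma and gives no proof, so your four-step sketch of Schmidt's method is optional.

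What needs correcting is how you repair the garbled statement. The typos are $r\geq N$ for $r\geq n$, and the reuse of $r$ for the number of primes in $S$. The intended lemma has $r\geq n$ forms at each place in general position (any $n$ of them linearly independent), which is why the exponent is $r-n-\delta$. By collapsing $r$ to $n$ you keep only the case $r=n$. The paper needs $r>n$: in Lemma~\ref{no-long-subsequences} it takes $n=2$, $r=3$, the forms $X_1,X_2,X_1+X_2$ at every place of $S=\{\infty\}\cup\{p: p\mid q_1\tilde{q_{2}}\}$, and uses the resulting exponent $1-\delta$. So the theorem you cite does not literally give the lemma, and you need the standard short reduction:
\begin{enumerate}
\item For primitive $x$ and each $v\in S$, let $I_v$ be a set of $n$ indices with the smallest values $|L_{i,v}(x)|_v$.
\item General position gives $\max_{i\in I}|L_{i,v}(x)|_v\gg\|x\|_v$ for every $n$-element set $I$, with $\|x\|_p=1$ because $x$ is primitive.
\item Hence each of the $r-n$ discarded factors at $v$ is $\gg\|x\|_v$, so together they contribute $\gg\|x\|_\infty^{r-n}$, and the hypothesis yields
\[
\prod_{v\in S}\prod_{i\in I_v}|L_{i,v}(x)|_v\ll\|x\|_\infty^{-\delta}.
\]
\item There are only finitely many tuples $(I_v)_{v\in S}$, so applying your $n$-form version to each tuple gives finitely many proper subspaces.
\end{enumerate}

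Your description of the later application is also off. It is not a case with $n=3$ and $S=\{\infty,p\}$. General position is trivial for $X_1,X_2,X_1+X_2$, so it does not ``amount to'' multiplicative independence. Independence enters only outside the subspace theorem, in two places. First, it gives $K_1\sim\alpha K$. Second, it excludes the finitely many exceptional lines $c_1x_1+c_2x_2=0$, using a prime $p\mid q_1$ with $p\nmid\tilde{q_{2}}$, which would force $p^{k+L}\mid c_1$.
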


Let $L_q(n)$ be the function that returns the maximum between the following two quantities
\begin{enumerate}
	\item the length of the longest block of digits $\mathrel{{q}{-}{1}}$ in the $q$-adic digit expansion of $n$, and
	\item the length of the longest block of digits $0$ in the $q$-adic digit expansion of $n$ that cannot be extended to contain the digit at position zero.
\end{enumerate} 

Now we can say the following about the digit expansion of $q_2^K$ in base $q_1$.
\begin{lemma}\label{no-long-subsequences}
    Let $\eta>0$ be arbitrary. Then for $q_1$ and $q_2$ multiplicatively independent,
    \[
        \sup\limits_{1\leq A \leq {q_1}^{\eta K}} L_{q_1}(Aq_2^K)\leq 2\eta K + o(K)
    \]
    as $K$ tends to infinity. In particular, we have for $\varepsilon>0$, $\eta >0$ and $K$ sufficiently large that
        \[
            \sup\limits_{1\leq A \leq q_1^{\eta K}} L_{q_1}(Aq_2^K) \leq 2\eta K.
        \]
\end{lemma}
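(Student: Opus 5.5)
The plan is to translate a long block of constant digits in $Aq_2^K$ into a very good simultaneous approximation, and then use Lemma \ref{p-adic-Subspace} to show such approximations are rare. Fix $\eta>0$ and $\delta>0$, and suppose that for infinitely many $K$ there is some $1\leq A\leq q_1^{\eta K}$ such that $Aq_2^K$ has, in base $q_1$, a block of $\ell\geq (2\eta+\delta)K$ consecutive digits all equal to $0$ or all equal to $q_1-1$, starting at position $i$ and not extending to position zero. Write $Aq_2^K = Xq_1^{i+\ell}+ c\,(q_1^{i+\ell}-q_1^i)/(q_1-1)\cdot[\text{digit}=q_1-1] + Y$ with $0\leq Y<q_1^i$. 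In either case this gives an identity of the form
\[
	(q_1-1)Aq_2^K - (q_1-1)X q_1^{i+\ell} - \epsilon q_1^{i+\ell} = Z,\qquad |Z|\ll q_1^{i},
\]
with $\epsilon\in\{0,1\}$ (absorbing the $-q_1^i$ and $Y$ terms into $Z$). The non-extension condition for zeros guarantees $Y\neq 0$ or the block is of nines, so that $Z$ is a genuine nonzero remainder and $X\neq 0$ since the block is interior.

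Next I will set up the subspace theorem with $n=3$ variables $x=(x_1,x_2,x_3)=\bigl((q_1-1)Aq_2^K,\; ((q_1-1)X+\epsilon)q_1^{i+\ell},\; Z\bigr)$, satisfying $x_1-x_2-x_3=0$; more conveniently use the two-variable version with $x=(x_1,x_2)$ and linear forms $L_{1,\infty}=X_1$, $L_{2,\infty}=X_1-X_2$, and at the primes $p\mid q_1q_2$ the forms $X_1,X_2$. The archimedean part is $\ll \|x\|\cdot q_1^{i}$. At primes dividing $q_2$, $|x_1|_p\leq |q_2^K|_p$, and at primes dividing $q_1$, $|x_2|_p\leq|q_1^{i+\ell}|_p$; the product over those primes is $\ll q_2^{-K}q_1^{-(i+\ell)}$. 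Since $\|x\|\asymp Aq_2^K\leq q_1^{\eta K}q_2^K$, the full product is bounded by
\[
	\|x\|\,q_1^{i}\,q_2^{-K}q_1^{-i-\ell}\ll q_1^{\eta K-\ell}\leq q_1^{-(\eta+\delta)K}\ll \|x\|^{-\delta'}
\]
for some $\delta'>0$ depending on $\delta$ and $\log q_2/\log q_1$, where I use that $\|x\|\leq q_1^{(\eta+\log_{q_1}q_2)K}$. After dividing by the gcd (which only helps, since common factors are supported on $S$ up to the bounded factor $(q_1-1)$ and a divisor of $A$; the factor from $A$ is harmless because its contribution is at most $q_1^{\eta K}$, which is exactly why the threshold is $2\eta K$ rather than $\eta K$), Lemma \ref{p-adic-Subspace} puts all solutions into finitely many one-dimensional subspaces, i.e. $x_1/x_2$ takes finitely many values.

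Finally I will rule out the finitely many lines: a fixed ratio $x_1/x_2=\rho$ means $(q_1-1)Aq_2^K/\bigl(((q_1-1)X+\epsilon)q_1^{i+\ell}\bigr)=\rho$, and then $Z=x_1-x_2=x_2(\rho-1)$ would be of size comparable to $x_2\geq q_1^{i+\ell}$ unless $\rho=1$; $\rho=1$ forces $Z=0$, excluded above, while $\rho\neq1$ contradicts $|Z|\ll q_1^i$ for $\ell$ large. Hence for all large $K$ every block has length $<(2\eta+\delta)K$, and letting $\delta\to0$ gives the $o(K)$ statement; the second claim follows by applying the first with $\eta$ replaced by slightly smaller value (or absorbing constants). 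The main obstacle I expect is the careful bookkeeping of the gcd normalization and the $A$-dependence in the $p$-adic factors: $A$ may share primes with $q_1q_2$ and can cancel part of the $p$-adic savings, and I must check this loses at most a factor $q_1^{\eta K}$, which is what produces the constant $2\eta$.
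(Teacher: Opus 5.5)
Your argument is correct. It follows the paper's basic strategy: apply Lemma \ref{p-adic-Subspace} to the near-identity $Aq_2^K\approx Xq_1^{i+\ell}$. The implementation differs in two useful ways.

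\begin{itemize}
\item \textbf{Which version of the subspace theorem.} The paper first writes $q_2^K=cq_1^x\tilde{q_2}^{K_1}$. It then uses three forms $x_1,x_2,x_1+x_2$ in two variables at every place of $S$, an $S$-unit type bound $\prod_{v\in S}|x_1x_2(x_1+x_2)|_v\geq\|x\|^{1-\delta}$ combined with the product formula. You use the classical version with two forms per place: $X_1,\,X_1-X_2$ at $\infty$ and $X_1,X_2$ at each $p\mid q_1q_2$. This gives the $S$-adic estimate $\ll Aq_1^{-\ell}$ directly.
\item \textbf{Excluding the exceptional lines.} The paper uses a $p$-adic divisibility argument with a prime $p\mid q_1$, $p\nmid\tilde{q_2}$. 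You argue archimedeanly. A fixed ratio $x_1/x_2=\rho\neq1$ forces $|Z|\gg|x_2|\geq q_1^{i+\ell}$, contradicting $|Z|\ll q_1^i$. The case $\rho=1$ is excluded by $Z\neq0$, which is exactly where the non-extension condition in $L_{q_1}$ enters. Your route needs no decomposition of $q_2^K$ and no control of $\nu_p(A)$. The divisibility route has to handle that, since $A$ may itself be divisible by a large power of $p$.
\end{itemize}

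Two corrections to your bookkeeping.

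First, the gcd normalisation costs nothing. With $y=x/g$, the left-hand side is multiplied by $\prod_{v\in S}|g|_v^{-2}\leq1$. Meanwhile $\|y\|^{-\delta'}\geq\|x\|^{-\delta'}$, because the exponent is negative. So the factor $2$ does not come from the gcd or from $A$. Your estimate $\ll q_1^{\eta K-\ell}$ already gives a contradiction once $\ell\geq(\eta+\delta)K$. That is, you actually prove $\eta K+o(K)$, as does the paper's final inequality before it is relaxed to $2\eta K$.

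Second, the ``in particular'' bound does not follow by replacing $\eta$ with a smaller value, since that shrinks the range of $A$. It follows from this stronger bound, or directly: if $\ell\geq2\eta K$, the product is $\ll q_1^{-\eta K}\ll\|x\|^{-\delta'}$ with $\delta'=\eta/(\eta+\log_{q_1}q_2)$, and the same subspace argument applies.
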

\begin{proof}
	First we note that we can write 
    \[
    q_2^K=cq_1^x\tilde{q_{2}}^{K_1}
    \]
    for some integers $x,\tilde{q_{2}},K_1$ and $c$ that satisfy that $x\geq 0$ is maximal, $c\mid q_1$ and $\tilde{q_{2}}\geq 2$. Additionally, we can assume that there exists a prime $p$ such that $p| q_1$ but $p\nmid \tilde{q_{2}}$. Furthermore, because $q_1$ and $q_2$ are multiplicatively independent, there exists some $\alpha>0$ such that $K_1\sim \alpha K$. Since we have chosen $x$ maximal, we know that $q_1\nmid c\tilde{q_{2}}^{K_1}$. Hence, we know that $q_2^K$ ends with $x$ many zeros in base $q_1$ and it suffices to study $L_{q_1}\left(c\tilde{q_{2}}^{K_1}\right)$.\\
	Let $L$ be the size of the largest $0$-block (or $(\mathrel{{q_1}{-}{1}})$-block). We are interested in bounding L from above.\\
	Note that
	\[
		Ac\tilde{q_{2}}^{K_1}=A'\tilde{q_{2}}^{K_1}q_1^{k+L}b\pm a
	\]
	for some $0\leq a \leq q_1^k$ and $0\leq b \leq A'\tilde{q_{2}}^{K_1}q_1^{-k-L}$. Hence, we have
	\[
		\left|A'\tilde{q_{2}}^{K_1}-q_1^{k+L}b\right|\leq q_1^k.
	\]
	To find a lower bound, we use the $p$-adic subspace theorem. Let $r=3$, $C=1$, $\delta>0$, $S=\{p:p|q_1\tilde{q_{2}}\}\cup \infty$, and let $L_{1,p} = x_1$, $L_{2,p}=x_2$, $L_{3,p}=x_1+x_2$. Then we get that
	\[
		\prod_{p\in S}\left|x_1\right|\left|x_2\right|\left| x_1+x_2 \right|\geq \max(|x_1|,|x_2|)^{1-\delta}.
	\]
	Plugging in $x_1=A'\tilde{q_{2}}^{K_1}$ and $x_2=-q_1^{k+L}b$ and using that
	\[
		\prod_{p\in S}|\tilde{q_{2}}q_1|_p=1
	\]
	and
	\[
		\prod_{p\in S}|x|_p \leq |x|
	\]
	we get that either
	\begin{align*}
	    |A'b|\left|A'\tilde{q_{2}}^{K_1}-q_1^{k+L}b\right|&\geq \prod_{p\in S}\left|A'\tilde{q_{2}}^{K_1}\right|\left|-q_1^{k+L}b\right|\left| A'\tilde{q_{2}}^{K_1}-q_1^{k+L}b \right|\\
        &\geq \max(A'\tilde{q_{2}}^{K_1},q_1^{k+L}b)^{1-\delta}\\
        &\geq \left(q_1^{k+L}\right)^{1-\delta}
	\end{align*}
	or $A'\tilde{q_{2}}^{K_1}$ and $-q_1^{k+L}b$ lie in one of finitely many subspaces of the form
	\[
		c_1x_1+c_2x_2 = 0.
	\]
	Since we know that there exists a prime $p$ such that $p| q_1$ and $p\nmid \tilde{q_{2}}$, we see that
	\[
		c_1 \equiv 0 \mod p^{k+L},
	\]
	which is impossible for $L$ large enough.
	Hence, we have that 
	\[
		\left| A'\tilde{q_{2}}^{K_1}-q_1^{k+L}b\right| \geq	\frac{\left(q_1^{k+L}\right)^{1-\delta}}{A'b}.
	\]
	Combining this with both bounds for $\left|A'\tilde{q_{2}}^{K_1}-q_1^{k+L}b\right|$, we get the inequality
	\[
		q_1^k\geq \frac{\left(q_1^{k+L}b\right)^{1-\delta}}{A'b}.
	\]
	Further using that $q_1^kb\leq A'\tilde{q_{2}}^{K_1}$, $A'\leq q_1^{K\eta}$, $K_1=\alpha K$ we get that
	\[
		L \leq \left(\frac{\eta}{1-\delta}+\frac{\left(\eta\log q_1 + \alpha\log \tilde{q_{2}}\right)\delta}{1-\delta}\right)K
	\]
	for all $\delta>0$. Since $\delta$ is arbitrary, we have that
	\[
		L \leq 2\eta K + o(K),
	\]
	which completes the proof.
\end{proof}
We immediately get the following corollary, using only the definition of $N$-admissibility and $L_{q}$.
\begin{corollary}\label{cor_powers_admissible}
	Let $H>0$ be an arbitrary constant. If $N$ is sufficiently large, then we have that all $q_2^K<N^H$ are $N$-admissible in base $q_1$.
\end{corollary}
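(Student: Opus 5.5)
We derive the corollary from Lemma \ref{no-long-subsequences} by turning $N$-admissibility of $d=q_2^K$ in base $q_1$ into a bound on the blocks of $0$'s and $(q_1-1)$'s in $Aq_2^K$. Write $q_2^K=q_1^e d'$ with $q_1\nmid d'$; as in the proof of Lemma \ref{no-long-subsequences}, we have $d'=c\tilde{q_2}^{K_1}$, possibly after absorbing a bounded factor. Fix $1\le A\le N$ and $\kappa$ with $q_1^\kappa\le d'$. The quantity $\|Ad'/q_1^\kappa\|$ is governed by the digits of $Ad'$ just below position $\kappa$. If it is smaller than $N^{-3}$, then the fractional part of $Ad'/q_1^\kappa$ starts with at least $3\log_{q_1}N-O(1)$ digits all equal to $0$ or all equal to $q_1-1$. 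In other words, $Ad'$, and hence $Aq_2^K$ after shifting by $e$ zeros, contains a block of $0$'s or of $(q_1-1)$'s of length at least $3\log_{q_1}N-O(1)$ ending at position $\kappa-1$ of $Ad'$.

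A $0$-block of this kind cannot reach position zero of $Ad'$. If it did, $q_1^{3\log_{q_1}N-O(1)}$ would divide $Ad'$; since $q_1\nmid d'$, this would force $A$ to supply a power of $q_1$ of size $\gg N^3/N^{O(1)}$ coming from primes shared with $d'$. To make this rigorous, the cleanest route is to note directly that $\|Ad'/q_1^\kappa\|\ge q_1^{-\kappa}$ whenever $q_1^\kappa\nmid Ad'$. When $q_1^\kappa\mid Ad'$ (possible only for $\kappa\le\log_{q_1}A+O(1)$, because $\gcd(d',q_1^\infty)$ is bounded up to the prime-power parts handled by $c$), the norm is $0$. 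This degenerate case must be excluded, and we treat it by checking the definition's intent: the case $\kappa$ small is controlled since $q_1^\kappa\le AN$ and the obstruction comes only from bounded $c$. I expect this bookkeeping, namely whether the definition tolerates the trivial divisibility case, to be the main nuisance. Away from it, the block is of the type counted by $L_{q_1}$, so $L_{q_1}(Aq_2^K)\ge 3\log_{q_1}N-O(1)$.

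We now apply Lemma \ref{no-long-subsequences}. Choose $\eta>0$ small, say $\eta=1/H'$ with $H'$ large in terms of $H$ and $\log q_2/\log q_1$. The hypothesis $q_2^K<N^H$ gives $K\le H\log N/\log q_2$, but the lemma requires $K$ large. For bounded $K$, meaning $K\le K_0(\eta)$, admissibility is immediate: $d'$ is then bounded, and $\|Ad'/q_1^\kappa\|\ge q_1^{-\kappa}\ge 1/d'\gg 1\ge N^{-3}$ whenever it is nonzero. For $K>K_0$ with $A\le N\le q_1^{\eta K}$, which holds when $K\ge\log_{q_1}N/\eta$, the lemma yields $L_{q_1}(Aq_2^K)\le 2\eta K\le 2\eta H\log_{q_1}N\cdot\frac{\log q_1}{\log q_2}<3\log_{q_1}N-O(1)$ once $\eta$ is small enough. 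This contradicts the previous paragraph. For intermediate $K$, with $K_0<K<\log_{q_1}N/\eta$, we apply the lemma with $\eta'$ chosen so that $q_1^{\eta' K}\ge N$, i.e. $\eta'=\log_{q_1}N/K$. Here we argue via the $\delta$-form of the bound in the proof, $L\le \frac{\eta'}{1-\delta}K+O(\delta K)=\frac{1}{1-\delta}\log_{q_1}N+O(\delta K)$, which is again below $3\log_{q_1}N-O(1)$. Hence every $q_2^K<N^H$ is $N$-admissible for $N$ large.
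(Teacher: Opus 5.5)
Your route is the paper's. The paper gives no argument beyond declaring the corollary immediate from the definition of admissibility (read through the block remark) and Lemma~\ref{no-long-subsequences}. Your reduction of $\|Ad'/q_1^\kappa\|<N^{-3}$ to a $0$- or $(q_1-1)$-block of length at least $\lfloor 3\log_{q_1}N\rfloor$ counted by $L_{q_1}$ supplies what the paper omits. So does your attention to uniformity in $\eta$, since the lemma with fixed $\eta$ only covers $A\le q_1^{\eta K}$.

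Your treatment of the divisibility case, however, rests on false claims. A long $0$-block \emph{can} reach position $0$, and $\gcd(d',q_1^\infty)$ need not be bounded. Take $q_1=30$, $q_2=15$, so that $d'=15^K$. With $A=2^j$, $j=\lfloor\log_2 N\rfloor$ and $K\ge j$, one gets $Ad'=30^j15^{K-j}$. This ends in $j>3\log_{30}N$ zeros, far more than $\log_{q_1}A+O(1)$, and $c$ is trivial here. Worse, under the literal definition the corollary fails for every $d'\ge q_1$, since $A=q_1$, $\kappa=1$ gives $\|d'\|=0$. So this case cannot be ``controlled''. It must be excluded by reading admissibility as the remark intends: disregard $\kappa$ with $q_1^\kappa\mid Ad'$, equivalently $0$-blocks extending to position $0$. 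That is exactly the convention built into $L_{q_1}$. State that reading explicitly and delete the size argument.

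Two smaller points on uniformity. In your intermediate range, the $\delta$-term of the bound from the lemma's proof is of order $\delta(\eta'\log q_1+\alpha\log\tilde q_2)K=O_H(\delta\log N)$, not $O(\delta K)$. This is because $\eta'K=\log_{q_1}N$ while $K$ may be much smaller than $\log N$; the conclusion still holds. Also, for $\eta$ small your ``large $K$'' range is empty, so everything rests on re-entering that proof, including uniformity of its exceptional subspaces.

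You can avoid this and use only the lemma's statement:
\begin{itemize}
\item If $q_1^\kappa<N^3$, then $\|Ad'/q_1^\kappa\|\ge q_1^{-\kappa}>N^{-3}$ unless $q_1^\kappa\mid Ad'$. Hence every $d'<N^3$ is automatically admissible.
\item If $d'\ge N^3$, then $K\ge 3\log_{q_2}N$. So $\log_{q_1}N/K$ lies in the fixed interval $[\log q_2/(H\log q_1),\log q_2/(3\log q_1)]$.
\item Apply the lemma finitely many times, with $\eta$ running over a geometric grid of ratio less than $3/2$ covering that interval. This gives $L_{q_1}(Aq_2^K)\le 2\rho\log_{q_1}N+o(\log N)<\lfloor 3\log_{q_1}N\rfloor$ for all $A\le N$, where $\rho<3/2$ is the grid ratio.
\end{itemize}
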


\begin{lemma}\label{odd-elimination}
	Let $d$ be an $N$-admissible number in base $q$. For all non-negative integers $a,b,s,t$ such that
	\[
		\log_q(N) <a<\log_{q}(d)
	\]
	and
	\[
		0< b-a < \log_q(N) - \lceil\log_{q}(t)\rceil
	\]
	and for all $\omega\in\{0,\dots,q^{b-a}-1\}$, there exists $A\in s + t\NN$ such that $0<A\leq q^3t^2q^{2(b-a)}N^3$ and
	\[
		(Ad)^{[a,b)}=\omega.
	\]
	Furthermore, there exist two constants $c_1, c_2>0$ such that for all $\omega\in\{0,\dots,q^{b-a}-1\}$ and for all $M>q^3q^{2(b-a)}N^3$,
	\begin{equation}
		c_1\frac{M}{q^{b-a}}<\#\{n<M|(nd)^{[a,b)}=\omega\}<c_2\frac{M}{q^{b-a}}.
	\end{equation}
\end{lemma}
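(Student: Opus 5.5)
We read the digit block $(Ad)^{[a,b)}$ as the integer part of $q^{b-a}\{Ad/q^{b}\}$, i.e.\ it is determined by where the fractional part of $Ad/q^b$ lies. Concretely, $(Ad)^{[a,b)}=\omega$ holds exactly when
\[
	\left\{\frac{Ad}{q^{b}}\right\}\in \Bigl[\frac{\omega}{q^{b-a}},\frac{\omega+1}{q^{b-a}}\Bigr),
\]
an interval of length $q^{-(b-a)}$. The plan is to show that the orbit $\{A d/q^b\}$, with $A$ running through $s+t\NN$, is dense at scale $q^{-(b-a)}$ within the stated range of $A$.

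\textbf{Step 1: one point close to zero.} Write $A=s+tn$. Then
\[
	\frac{Ad}{q^b}=\frac{sd}{q^b}+n\,\frac{td}{q^b},
\]
so it suffices to understand the rotation by $\theta:=td/q^b \bmod 1$. Since $t$ is fixed, $td/q^b\equiv td'/q^{b-e}$, where $d=q^ed'$. We first apply Dirichlet's approximation theorem with parameter $Q=q\,t\,q^{b-a}N$ (roughly): this gives some $1\le n_0\le Q$ with $\|n_0\theta\|\le 1/Q$. Next we use $N$-admissibility for a lower bound. Here $n_0t\le qt^2q^{b-a}N$, and we need this to be $\le N$ up to the permitted slack; this is where the hypothesis $b-a<\log_q N-\lceil\log_q t\rceil$ enters. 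Admissibility then gives $\|n_0\theta\|\ge N^{-3}$. The condition $a>\log_q N$ guarantees that $q^{b-e}\le d'$ in the relevant range, so admissibility applies at exponent $\kappa=b-e$; the case where $e$ exceeds some of these indices makes the block trivially $0$, and we will have to check that this is excluded by $a<\log_q d$ together with the admissibility of $d$. We therefore obtain a step $\gamma=\pm\|n_0\theta\|\in[N^{-3},q^{-(b-a)}/ (qt)]$, roughly.

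\textbf{Step 2: hitting every interval.} Stepping $k=0,1,\dots,\lceil 1/|\gamma|\rceil$ times by $n_0$ moves the point $sd/q^b+kn_0\theta$ monotonically around the circle in increments smaller than the interval length $q^{-(b-a)}$, so it lands in every target interval. The number of steps needed is at most $N^3$, so
\[
	A\le s+t\,n_0\,N^3\lesssim q^3t^2q^{2(b-a)}N^3,
\]
after adjusting constants (with $s$ reduced mod $t$). This proves the first claim.

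\textbf{Step 3: the counting statement.} For the second claim we take $t=1$ and split $[0,M)$ into blocks of length $P\asymp n_0/|\gamma|$-scale. Within each such block, the points $kn_0\theta$ for each residue class mod $n_0$ form an arithmetic progression on the circle with step $\gamma$. Each such progression visits a fixed interval of length $q^{-(b-a)}$ between $\lfloor q^{-(b-a)}/|\gamma|\rfloor$ and $\lceil q^{-(b-a)}/|\gamma|\rceil+1$ times per revolution. Because $|\gamma|\le q^{-(b-a)}/q$, these counts are comparable to $q^{-(b-a)}/|\gamma|$ up to a constant factor. Summing over residues and complete revolutions gives counts proportional to $M/q^{b-a}$. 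The assumption $M>q^3q^{2(b-a)}N^3$ ensures there is at least one full revolution, so the boundary loss is absorbed into $c_1,c_2$.

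\textbf{Main obstacle.} The delicate point is Step 1: choosing the Dirichlet parameter so that $n_0t$ stays within the range $A\le N$ where admissibility applies, while $1/Q$ remains below the interval length, and verifying the exponent condition $q^\kappa\le d'$. I would first try $Q=q^{b-a+1}t$ and accept the bookkeeping losses in the constants.
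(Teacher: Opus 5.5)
Your plan is the paper's argument in fractional-part language. Dirichlet produces a multiplier whose multiple of $d/q^b$ lies within one block-width of an integer. Admissibility bounds that distance from below. A monotone walk around the circle then hits every block value, and per-revolution counting gives the second part. The paper additionally multiplies by $q^c$, $c<3\log_qN$, to normalise the step to roughly one block-width. You simply walk $\lceil 1/\gamma\rceil$ steps, which gives the same bound on $A$ and slightly cleaner counting.

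\textbf{Step 1 fails as you parametrise it.} Admissibility is a hard condition on multipliers $1\le A\le N$, and the multiplier you feed into it is $n_0t$.
With $Q=qtq^{b-a}N$, $n_0t$ can be as large as $qt^2q^{b-a}N>N$. Your fallback $Q=q^{b-a+1}t$ still allows $n_0t$ up to $q^{b-a+1}t^2$, which the hypothesis only bounds by $qtN$. No constant absorbs this.

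The right choice is $Q=q^{b-a}$. Dirichlet gives $1\le n_0\le q^{b-a}$ with $\|n_0\theta\|\le (q^{b-a}+1)^{-1}<q^{-(b-a)}$. The hypothesis $b-a<\log_qN-\lceil\log_qt\rceil$ says exactly that $tq^{b-a}<N$, so $n_0t<N$. The paper sidesteps the issue differently: it applies Dirichlet to $d/q^b$ itself, so admissibility is only invoked for the Dirichlet denominator $C$, and uses $\|tCd/q^b\|=t\|Cd/q^b\|$.

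The extra margin $\gamma\le q^{-(b-a)}/(qt)$ you were aiming for is not needed. Put $L=q^{-(b-a)}$. A monotone walk with step $\gamma<L$ cannot jump over a half-open interval of length $L$. In Step 3, the bounds $\lfloor L/\gamma\rfloor\ge L/(2\gamma)$ and $\lceil L/\gamma\rceil+1\le 3L/\gamma$ already give comparability.

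\textbf{Two other claims in Step 1 are false as stated.} The paper's proof passes over the same points silently.

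First, large $e$ is not excluded. Admissibility of $d=q^ed'$ constrains only $d'$. If $e>a$, the digits of $Ad$ in $[a,\min(e,b))$ vanish for every $A$, and the lemma fails. You must add $e\le a$, e.g.\ $q\nmid d$, which holds in every application in the paper.

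Second, $a>\log_qN$ does not give $q^{b-e}\le d'$. That would need $b\le\log_qd$, which is not assumed. It also fails where the paper later applies the lemma to blocks starting at $\lfloor\log_{q_1}d_1\rfloor$.

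The fix is to apply admissibility at $\kappa_0=\lfloor\log_qd'\rfloor$.
\begin{itemize}
\item Since $a-e\le\kappa_0$, if $b-e>\kappa_0$ then $j:=b-e-\kappa_0\le b-a$.
\item With $x=n_0td'/q^{\kappa_0}$, the inequality $\|x\|\le q^{j}\|x/q^{j}\|$ gives $\gamma\ge N^{-3}q^{-(b-a)}$.
\item Then at most $N^3q^{b-a}+1$ steps are needed, so $A\le s+3tq^{2(b-a)}N^3$.
\item One revolution per residue class needs $M\gtrsim n_0/\gamma\le q^{2(b-a)}N^3$.
\end{itemize}
Both fit the stated bounds, so your Steps 2 and 3 go through with this modification.
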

\begin{remark}
	The first part of the proof is analogous to \cite[Corollary 4.1]{DrmotaSpiegelhofer2025} (``Odd elimination lemma'').
\end{remark}
\begin{proof}
	Let $m=\max(1,\lceil\log_q(t)\rceil)$. By Dirichlet's approximation theorem there exists a positive integer $C<q^{b-a+m}$ such that
	\[
		\left|\left|\frac{Cd}{q^b}\right|\right| < q^{-(b-a+m)}.
	\]
	By the admissibility assumption we know that
	\[
		\left|\left|\frac{Cd}{q^b}\right|\right| > N^{-3}.
	\]
	This means for the digits of $Cd$ that either $(Cd)^{[a-m,b)}=0$ or $(Cd)^{[a-m,b)}=q^{b-a+m}-1$. Therefore, the digits at position $a-m$ to $b$ are either all 0 or all these digits are equal to $\mathrel{{q}{-}{1}}$. In the first case, the second condition guarantees that $(Cd)^{[b-2\lceil\log_q(N)\rceil,b)}\neq 0$. Hence, we can find a positive integer $c< 3\log_q(N)$ such that
	\[
		(q^cCd)^{[a-m,b)} = x,
	\]
	for some $x\in \{1,\dots,\mathrel{{q}{-}{1}}\}$. Hence, we have that
	\[
		(tq^cCd)^{[a-2,b)} = x',
	\]
	for some $x'\in \{1,\dots,q^2-1\}$. We now consider the values of
	\[
		(rtq^cCd)^{[a,b)}
	\]
	for $r\in \NN$ and notice that as $r$ increases, $(rtq^cCd)^{[a,b)}$ permutes through all $q^{b-a}$ possible values of $(rtq^cCd)^{[a,b)}$.\\
	In the other case where we assume that $(Cd)^{[a-m,b)}=q^{b-a+m}-1$ the second condition guarantees that $(Cd)^{[b-2\lceil\log_q(N)\rceil,b)}\neq q^{2\lceil\log_q(N)\rceil}-1$, and we can find a positive integer $c$ analogously to the first case. Now we consider $(rtq^cCd)^{[a,b)}$ as $r$ increases and notice that we run through all the possible values of $(rtq^cCd)^{[a,b)}$ in the opposite direction. In both cases we see that each value is attained at most $q^2$ many times.
	
	To conclude the poof of the first statement, we note that since we permute through all possibilities we can find an $r$ such that
	\[
		(rtq^cCd+sd)^{[a,b)} = \omega.
	\]
	We choose $A=rtq^cC+s$ and notice that we can choose $r<q^{b-a+2}$ and have that $q^c<N^3$ $C<q^{b-a+m}<qtq^{b-a}$ and hence we have that $A<q^3t^2q^{2(b-a)}N^3$. 
	
	To show the second part, we set $t=1$. We write $n= rq^cC + s$, where $q^cC$ are as above. We fix $s$ and let $r$ vary. Let $r_0$ be the maximum $r$ such that $rq^cC + s<M$. Let us assume that 
	\[
		(q^cCd)^{[a,b)} = 0.
	\]	
	The case were $(q^cCd)^{[a,b)} = q^{b-a}-1$ can be shown analogously. We set $x$ to be
	\[
		x:= (q^cCd)^{[0,b)}.
	\]
	Notice that as we increment $r$, the value of $(rq^cCd)^{[0,b)}$ increases by at most 1 in each step, until
	\[
		\frac{rx}{q^b}>1
	\]
	At this value of $r$, $(q^cCd)^{[a,b)}$ will reach $0$ again. Therefore, there ar/e between
	\[
		\frac{r_0x}{q^b} - 1 \text{ and } \frac{r_0x}{q^b} + 1
	\]
	many such cycles through all possibilities. We also observe that we stay at one value at most $q^2$ many times. This holds for every residue class $s$, hence, we hit the number $\omega$ at most
	\[
		q^cC\left(\frac{r_0x}{q^b} + 1\right)\cdot\left(q^2\right) = q^2\frac{q^cCr_0}{q^{b-a}} + q^cCq^2 < c_2 \frac{M}{q^{b-a}}
	\]
	many times, which can be seen by using that $x<q^a$. Similarly, we get the lower bound:
	\[
		q^cC\left(\frac{r_0x}{q^b}-1\right) = \frac{q^cCr_0}{q^{b-a}} -1 > c_2 \frac{M}{q^{b-a}}.
	\]
\end{proof}

The lemma below provides an upper bound on the number of integers whose sum of digits is either very large or very small.
\begin{lemma}\label{lem_bound_on_exception}
	For all pairs of positive integers $A, C$ and for all $N$ sufficiently large we have that
	\[
		\#\left\{n<N:\left|s_q(n)-\frac{q-1}{2}\log_q(N)\right|>Jm/C\right\} < \frac{N}{(\log(N))^A}.
	\]
\end{lemma}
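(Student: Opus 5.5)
We view $s_q(n)$ for $n$ uniform in $[0,N)$ as a sum of independent digits and apply a Chernoff bound. Write $\nu=\lceil\log_q N\rceil$. The count for $n<N$ is at most the count for $n<q^{\nu}$, and $|\frac{q-1}{2}\log_q N-\frac{q-1}{2}\nu|\leq \frac{q-1}{2}$. Since $Jm/C\to\infty$, this shift can be absorbed by replacing $Jm/C$ with $Jm/(2C)$. So it suffices to show
\[
\#\left\{n<q^{\nu}: \left|s_q(n)-\tfrac{q-1}{2}\nu\right|>Jm/(2C)\right\}<\frac{N}{(\log N)^{A}}.
\]
For $n$ uniform in $[0,q^\nu)$ the digits $\delta_0(n),\dots,\delta_{\nu-1}(n)$ are i.i.d.\ and uniform on $\{0,\dots,q-1\}$. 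Hence $s_q(n)-\frac{q-1}{2}\nu$ is a sum of $\nu$ independent, centred random variables, each bounded by $(q-1)/2$ in absolute value.

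Hoeffding's inequality, or equivalently the moment generating function bound $\mathbb{E}\,\e^{t(\delta-(q-1)/2)}\leq \e^{t^2(q-1)^2/8}$ optimised in $t$, gives
\[
\mathbb{P}\left(\left|s_q(n)-\tfrac{q-1}{2}\nu\right|>x\right)\leq 2\exp\left(-\frac{2x^2}{\nu(q-1)^2}\right).
\]
We insert $x=Jm/(2C)$. By the definitions of the constants,
\[
m\asymp \frac{\sqrt{\log N}}{\sqrt J}, \qquad\text{so}\qquad Jm\asymp \sqrt{J\log N}.
\]
Therefore
\[
\frac{x^2}{\nu}\asymp \frac{J\log N}{C^2\log N}\asymp \frac{J}{C^2}\asymp \frac{\varepsilon(\log\log N)^{1+\varepsilon}}{C^2}.
\]
The tail probability is then at most $2\exp\bigl(-c_{q,C,\varepsilon}(\log\log N)^{1+\varepsilon}\bigr)$. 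Because the exponent $1+\varepsilon$ exceeds $1$, this is $o\bigl((\log N)^{-A}\bigr)$ for every fixed $A$. Multiplying by $q^\nu\leq qN$ gives the claim once $N$ is large enough in terms of $A$, $C$, $q$ and $\varepsilon$.

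The only delicate point is the scaling of the constants. The choice $J=\lfloor\varepsilon(\log\log N)^{1+\varepsilon}\rfloor$ is exactly what makes the Gaussian-scale deviation $Jm\approx\sqrt{J\log N}$ beat every power of $\log N$. The Chebyshev-type variance bound used in Lemma~\ref{Addition} would only give a polynomial saving in $J$, so an exponential moment bound is genuinely needed. One should also check that the floor functions in $m$ and $J$ do not spoil the asymptotic $Jm\asymp\sqrt{J\log N}$; they do not, since $m\to\infty$ and $J\to\infty$.
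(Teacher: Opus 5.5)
Your proof is correct and follows the paper's argument essentially step for step. You pass to $n<q^{T}$ with $T=\lceil\log_q N\rceil$, absorb the bounded shift by using the threshold $Jm/(2C)$, and apply Hoeffding's inequality to the i.i.d.\ uniform digits, where $(Jm)^2\asymp J\log N$ makes the tail $\exp\bigl(-c(\log\log N)^{1+\varepsilon}\bigr)=o\bigl((\log N)^{-A}\bigr)$; your bookkeeping of this scaling is in fact cleaner than the paper's.
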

\begin{proof}
	Let $T=\lceil \log_q(N) \rceil$, and note that for $N$ sufficiently large we have
	\[
		\left|\frac{q-1}{2}\log_q(N)-\frac{q-1}{2}T\right|<Jm/2C.
	\]
	Therefore, by the triangle inequality, it suffices to show that
	\[
		\#\left\{n<N:\left|s_q(n)-\frac{q-1}{2}T\right|>Jm/2C\right\} < \frac{N}{(\log(N))^A}.
	\]
	We will even prove the stronger statement
	\[
		\#\left\{n<q^T:\left|s_q(n)-\frac{q-1}{2}T\right|>Jm/2C\right\} < \frac{q^T}{q(\log(N))^A} < \frac{N}{(\log(N))^A}.
	\]
	Since for $n<q^T$ each combination of $T$ digits appears exactly once, we can view these digits as a string of $T$ many iid random variables, each taking the values between $0$ to $q-1$ uniformly. We conclude the proof by using Hoeffding's inequality:
	\begin{align*}
		\frac 1{q^T}\left\{n < q^T:\left|s_q(n)-\frac{q-1}{2}T\right|>Jm/2C\right\}
		&\ll \exp\left(-\frac{\lambda(\log\lambda)^{1+2\varepsilon}}{4C^2T(q-1)^2}\right)\\
		&\ll \exp(-C'(log\lambda)^{1+2\varepsilon})\\
		&\ll \log(N)^{-D}.
	\end{align*}
\end{proof}

Next we state the so-called carry propagation lemma. It has been proven and used in slight variations in multiple papers (e.g. \cite{Spiegelhofer2018} and \cite{MauduitRivat2010}). We state the version of Toumi \cite[Lemma 3.3]{Toumi2025}.
\begin{lemma}\label{lem_carries}
	Let $r>0$, $\lambda\geq1$, $N\geq 1$ be integers and $\alpha,\beta\in \RR$ with $\alpha>0$ and $\beta\geq 0$. Let $I\subset[0,\infty)$ be an interval in $\RR$ containing exactly $N$ consecutive integers. Then
	\begin{align*}
		&\#\big\{n\in I: s_q(\lfloor\alpha(n+r)+\beta\rfloor)-s_q(\lfloor\alpha(n)+\beta\rfloor)
		\neq\\
		&\hspace{55pt} s_q^{[0,\lambda)}(\lfloor\alpha(n+r)+\beta\rfloor)-
		s_q^{[0,\lambda)}(\lfloor\alpha(n)+\beta\rfloor)\big\}
		<r\left(\frac{N\alpha}{q^\lambda}+2\right)
	\end{align*}
\end{lemma}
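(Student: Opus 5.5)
The plan is to compare the digits of $x_n:=\lfloor\alpha n+\beta\rfloor$ and $y_n:=\lfloor\alpha(n+r)+\beta\rfloor$ directly. Write $y_n=x_n+D_n$, where $D_n\geq 0$ because $\alpha>0$ and $r>0$. Split both numbers at position $\lambda$:
\[
x_n=q^\lambda U+u, \qquad 0\leq u<q^\lambda .
\]
Then $s_q(x_n)=s_q(U)+s_q^{[0,\lambda)}(x_n)$, and similarly for $y_n$. The identity
\[
s_q(y_n)-s_q(x_n)=s_q^{[0,\lambda)}(y_n)-s_q^{[0,\lambda)}(x_n)
\]
holds as soon as the high parts agree, that is, $\lfloor y_n/q^\lambda\rfloor=\lfloor x_n/q^\lambda\rfloor$. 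So it suffices to count the $n\in I$ for which some multiple of $q^\lambda$ lies in the half-open interval $(x_n,y_n]$. I will call such an $n$ a \emph{carry} at level $\lambda$.

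I will count carries by decomposing the shift $r$ into unit steps:
\[
(x_n,y_n]=\bigcup_{i=0}^{r-1}\big(\lfloor\alpha(n+i)+\beta\rfloor,\lfloor\alpha(n+i+1)+\beta\rfloor\big].
\]
If $(x_n,y_n]$ contains a multiple of $q^\lambda$, then for some $0\leq i<r$ the index $n'=n+i$ has a multiple of $q^\lambda$ in $(z_{n'},z_{n'+1}]$, where $z_k:=\lfloor\alpha k+\beta\rfloor$. For a fixed $i$, the map $n\mapsto n+i$ is injective. Hence the number of exceptional $n$ is at most $r$ times the number of indices $k$ in a window of $N$ consecutive integers for which $(z_k,z_{k+1}]$ contains a multiple of $q^\lambda$.

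For that single count, I use that $z_k$ is nondecreasing in $k$, so the intervals $(z_k,z_{k+1}]$ for consecutive $k$ are disjoint. Their union over the window is $(z_{k_0},z_{k_0+N}]$, which has length at most $\alpha N+1$. The number of multiples of $q^\lambda$ in an interval of length $\alpha N+1$ is at most $(\alpha N+1)/q^\lambda+1\leq \alpha N/q^\lambda+2$. Each such multiple lies in exactly one of the disjoint intervals, so at most $\alpha N/q^\lambda+2$ indices $k$ qualify. Multiplying by $r$ gives the claimed bound $r(N\alpha/q^\lambda+2)$.

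The step needing the most care is the bookkeeping between the window of $k$ and the interval $I$. For each $i$ the shifted window $I+i$ is again a set of $N$ consecutive integers, so the count applies verbatim. It also has to be checked that "no multiple of $q^\lambda$ in $(x,y]$" really gives equal high parts, which is immediate from $\lfloor y/q^\lambda\rfloor-\lfloor x/q^\lambda\rfloor=\#\{j: x<jq^\lambda\leq y\}$. Beyond that the argument is elementary and uses no earlier results of the paper.
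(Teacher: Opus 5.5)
Your argument is correct. The paper gives no proof of its own here; it only cites Toumi's Lemma 3.3, and your route (reduce to unit shifts, then count the multiples of $q^\lambda$ crossed by the nondecreasing sequence $\lfloor\alpha k+\beta\rfloor$ over a window of $N$ consecutive integers) is the standard argument behind that lemma in the cited literature. The one point to tighten is that your ``at most'' only gives a non-strict bound. Since $q^\lambda\geq 2$, each unit-step count is at most $(\alpha N+1)/q^\lambda+1\leq \alpha N/q^\lambda+3/2<\alpha N/q^\lambda+2$, and this yields the strict inequality in the statement.
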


The following three results are vital for proving equidistribution results along a short arithmetic progression modulo $m$ in Section \ref{sec_uniform}. 

\begin{lemma}\label{interated_van_der_corput}
	Let $Q\geq1$ be an integer. Assume that $J$ is a finite non-empty interval in $\ZZ$ and $g:J\rightarrow \{z\in \CC: |z| = 1 \}$. Then we have for all integers $M_1,\dots,M_{Q}\geq1$ and for all $R\geq 1$ that
	\[
	\left|\frac{1}{|J|}\sum_{n\in J}g(n)\right|^{2^Q}\ll \frac {1}{R^Q}\sum_{r\in\{1,\dots,R\}^Q}|K(r_1M_1,\dots,r_{Q}M_{Q})|+O\left(\frac{(M_1+\dots+M_{Q})R}{|J|}+\frac{1}{R}\right),
	\]
	where
	\[
	K(m_1,\dots,m_{Q}):=\frac{1}{|J|}\sum_{n\in J}\prod_{\varepsilon\in\{0,1\}^Q}\mathcal{C}^{|\varepsilon|}g\left(n+\sum_{\ell=1}^{Q}\varepsilon_{\ell}m_{\ell}\right).
	\]
\end{lemma}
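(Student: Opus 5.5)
We prove this iterated van der Corput inequality by induction on $Q$, each step being the classical van der Corput inequality with shifts restricted to multiples of $M_\ell$. We first establish the base case $Q=1$ in the following form. For a function $h:J\to\CC$ with $|h|\leq 1$ and integers $M,R\geq 1$,
\[
\left|\frac1{|J|}\sum_{n\in J}h(n)\right|^2\ll \frac1R\sum_{r=1}^{R}\left|\frac1{|J|}\sum_{n\in J}h(n)\overline{h(n+rM)}\right|+O\left(\frac{MR}{|J|}+\frac1R\right),
\]
with the convention that $h$ vanishes outside $J$.

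To prove the base case, we write $S=\sum_{n\in J}h(n)$. Averaging over shifts gives
\[
RS=\sum_{r=1}^{R}\sum_{n}h(n+rM),
\]
where the shifted sums differ from $S$ by at most $rM$ boundary terms. Alternatively, we avoid the boundary issue entirely by extending $h$ by zero, so that $RS=\sum_{n\in J'}\sum_{r=1}^{R} h(n+rM)$ with $|J'|\leq |J|+RM$. Cauchy--Schwarz then gives
\[
R^2|S|^2\leq (|J|+RM)\sum_{n}\left|\sum_{r}h(n+rM)\right|^2 .
\]
Expanding the square yields diagonal terms contributing $R|J|$ and off-diagonal terms $\sum_{r\neq r'}\sum_n h(n+rM)\overline{h(n+r'M)}$. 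Grouping the off-diagonal terms by the difference $s=r-r'$, each $|s|<R$ occurs at most $R$ times, and each inner sum is a correlation at lag $sM$. Dividing by $R^2|J|^2$ and using the symmetry $s\leftrightarrow -s$ (complex conjugation) bounds the contribution by $\frac1R\sum_{s=1}^{R}|K(sM)|$. The diagonal gives $1/R$, and the factor $(|J|+RM)/|J|=1+O(MR/|J|)$ produces the error $MR/|J|$. The truncation, where $h(n+sM)$ is zero because $n+sM\notin J$, costs at most $sM\leq RM$ terms, which is again absorbed into $O(MR/|J|)$.

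For the inductive step, we apply the base case to $g$ with $M=M_1$, which bounds $|\text{avg } g|^{2}$ by $\frac1R\sum_{r_1}|K_1(r_1M_1)|$ plus errors. Each $K_1(r_1M_1)$ is the average over $J$ (up to $r_1M_1$ boundary terms) of the unimodular function $g_{r_1}(n)=g(n)\overline{g(n+r_1M_1)}$. We raise the inequality to the power $2^{Q-1}$. Hölder (power mean) moves the power inside the average over $r_1$, and $(x+y)^{k}\ll x^k+y$ for $x,y\leq 1$ keeps the errors of the same shape. We then apply the induction hypothesis with $Q-1$ and $M_2,\dots,M_Q$ to each $|\text{avg } g_{r_1}|^{2^{Q-1}}$. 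The resulting product over $\varepsilon\in\{0,1\}^{Q-1}$ of $\mathcal C^{|\varepsilon|}g_{r_1}$ is exactly the product over $\varepsilon\in\{0,1\}^Q$ defining $K(r_1M_1,\dots,r_QM_Q)$, with the conjugation parity correct. The errors accumulate to $O((M_1+\dots+M_Q)R/|J|+1/R)$, with constants depending only on $Q$.

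The main obstacle is the bookkeeping of boundary effects. The functions $g_{r_1}$ are naturally defined only on $J\cap(J-r_1M_1)$, which is slightly shorter than $J$, so each step replaces $J$ by a subinterval that is shorter by at most $RM_\ell$. Replacing the average over the shrunken interval by the average over $J$ in $K$ changes it by $O(RM_\ell/|J|)$, since the summands are unimodular. If $RM_\ell$ is comparable to $|J|$, the claim is trivial because the error term is then $\gg 1$ and the left side is at most $1$. So we may assume all shrinkages are small, and the errors from the $Q$ steps add up as stated.
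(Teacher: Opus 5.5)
Your proposal is correct and matches the paper's approach. The paper gives no proof beyond saying the lemma follows by alternately applying Cauchy--Schwarz and van der Corput (citing Spiegelhofer). Your induction is exactly that iteration: van der Corput with shifts in $M_1\ZZ$, then Jensen/H\"older to move the power $2^{Q-1}$ inside the $r_1$-average, then the inductive hypothesis for $g_{r_1}$. The boundary and trivial-case bookkeeping is handled correctly.
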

This result is obtained by alternately applying the Cauchy--Schwarz inequality and the van der Corput inequality to the left-hand side (cf. Spiegelhofer \cite{Spiegelhofer2023}).

Proposition \ref{prop_equi} treats the distribution of the sum of digits function in different bases $q_i$ along arithmetic progressions modulo $m$. We reduce this question to a sum over $n\in [1,\lcm(q_1^{a_i},\dots, q_t^{a_t})]\cap \NN$ for some positive integers $a_i$, where each sum of digits functions is restricted such that it only depends on the residue class of $n$ modulo $q_i^{a_i}$. If the bases $q_i$ are coprime, then the Chinese Remainder Theorem states that the residue classes are independent and therefore we can separate the contribution of each $q_i$. The next lemma and its corollary states that in the multiplicatively independent setting a certain independence between the contribution of one base $q_i$ and the remaining bases holds.
\begin{lemma} \label{small_gcd}
	Let $q_1,\dots,q_t$ be multiplicatively independent integers. Let $N\in \NN$, further let $a_i\in \ZZ$ be minimal such that $q_i^{a_i}>N$. Then there exists an $\varepsilon>0$ and an index $j$ such that for all $N$ sufficiently large it holds that
	\[
		\gcd(q_j^{a_j}, \lcm(q_1^{a_1},\dots, q_{j-1}^{a_{j-1}}, q_{j+1}^{q_{j+1}}, \dots,q_t^{a_t}))<N^{1-\varepsilon}.
	\]
\end{lemma}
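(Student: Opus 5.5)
Throughout, $a_i$ is the minimal integer with $q_i^{a_i}>N$, so that $N<q_i^{a_i}\leq q_iN$. I would reduce the statement to the prime factorisations of the $q_i$. Write $q_i=\prod_p p^{e_{i,p}}$. Then
\[
	\gcd\bigl(q_j^{a_j},\lcm_{i\neq j}q_i^{a_i}\bigr)=\prod_{p\mid q_j} p^{\min\left(a_je_{j,p},\,\max_{i\neq j}a_ie_{i,p}\right)}.
\]
Since $a_i=\log N/\log q_i+O(1)$, the exponent of $p$ coming from base $q_i$ is $\log N\cdot \frac{e_{i,p}}{\log q_i}+O(1)$. Set $\theta_{i,p}:=e_{i,p}/\log q_i$, so that $\sum_p\theta_{i,p}\log p=1$ for each $i$. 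Up to a bounded factor, the gcd then equals
\[
	N^{\sum_{p\mid q_j}\min(\theta_{j,p},\max_{i\neq j}\theta_{i,p})\log p}.
\]
It therefore suffices to find an index $j$ with
\[
	\sum_p \min\Bigl(\theta_{j,p},\max_{i\neq j}\theta_{i,p}\Bigr)\log p<1,
\]
that is, a $j$ and a prime $p$ with $\theta_{j,p}>\max_{i\neq j}\theta_{i,p}$. Any $\varepsilon$ smaller than the resulting gap then works for all large $N$, because the $O(1)$ errors contribute only a constant factor.

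To find such a $j$, I would argue by contradiction. Suppose that for every $j$ and every prime $p$ we have $\theta_{j,p}\leq\max_{i\neq j}\theta_{i,p}$. Then for each prime $p$ the maximum of $\theta_{i,p}$ over $i$ is attained at least twice.

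Now choose $j$ with the vector $(\theta_{j,p})_p$ maximal with respect to a suitable order, and pass to a pairwise comparison. If two bases $q_i,q_k$ satisfy $\theta_{i,p}\leq\theta_{k,p}$ for all $p$, then summing with weights $\log p$ gives $1\leq 1$. Equality must hold, so $\theta_{i,p}=\theta_{k,p}$ for all $p$. This means the exponent vectors $e_{i,\cdot}$ and $e_{k,\cdot}$ are proportional, hence $q_i^{\log q_k}$ and $q_k^{\log q_i}$ have proportional factorisations, and $q_i,q_k$ are multiplicatively dependent, a contradiction.

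\textbf{Main obstacle.} For $t>2$ the contradiction hypothesis does not immediately yield a dominating pair, because different primes may have their maxima shared by different pairs of indices. I would try two approaches, in this order.

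First, take $j$ maximizing the quantity $\sum_p\theta_{j,p}^2\log p$. For each $p$ choose $i(p)\neq j$ with $\theta_{i(p),p}\geq\theta_{j,p}$, and try to derive a strict inequality from the constraint $\sum_p\theta_{i,p}\log p=1$.

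Failing that, I would read the lemma's quantifiers as the paper uses them and prove the weaker claim that \emph{some} $j$ works. If the maxima-shared condition holds for every $p$, I would attempt to show that all the vectors $\theta_{i,\cdot}$ must coincide on a common support, which again contradicts multiplicative independence. This last combinatorial step is where I expect the real difficulty lies, and it may force a genuinely stronger use of independence than pairwise non-proportionality.
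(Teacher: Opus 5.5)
Your reduction is correct and uses the paper's own quantity. Comparing $p$-adic valuations via $\theta_{i,p}=\nu_p(q_i)/\log q_i$, which is exactly the ratio the paper maximises, you rightly conclude that it suffices to exhibit \emph{one} index $j$ and \emph{one} prime $p$ with $\theta_{j,p}>\max_{i\neq j}\theta_{i,p}$. But you never produce such a pair. Your domination argument only covers the case where one whole vector dominates another, which settles $t=2$ but not $t>2$. The two fallback strategies (maximising $\sum_p\theta_{j,p}^2\log p$, or forcing the vectors to coincide on a common support) are left as unexecuted ideas. So the proof is incomplete at exactly the step that carries the content, and the ``combinatorial obstacle'' across primes that you describe is a misdiagnosis.

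The missing observation is that equality in a \emph{single} positive coordinate already contradicts independence, so different primes never need to interact. Suppose $\theta_{i,p}=\theta_{k,p}>0$ for some $i\neq k$. Then $e_{k,p}\log q_i=e_{i,p}\log q_k$, i.e.
\[
	q_i^{e_{k,p}}=q_k^{e_{i,p}}
\]
with both exponents positive integers, which is a multiplicative dependence between $q_i$ and $q_k$. You used this only in the weaker form ``proportional exponent vectors imply dependence''. Because $\theta_{i,p}$ carries the factor $1/\log q_i$, one coordinate already suffices.

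Now fix any prime $p\mid q_1$. The maximum $\max_i\theta_{i,p}\geq\theta_{1,p}>0$ is attained at a unique index $j$, so $\delta:=\theta_{j,p}-\max_{i\neq j}\theta_{i,p}>0$. Since $\nu_p(q_j^{a_j})-\max_{i\neq j}\nu_p(q_i^{a_i})=\delta\log N+O(1)$ and $q_j^{a_j}\leq q_jN$, the gcd is $O(N^{1-\delta\log p})$, and any $\varepsilon<\delta\log p$ works. This is precisely the paper's proof.

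It also shows that your contradiction hypothesis (``for every $p$ the maximum is attained at least twice'') already fails at any single prime dividing some $q_i$. Only pairwise multiplicative independence is needed, so your closing worry about requiring a stronger form of independence is unfounded.
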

\begin{proof}
	Let $p$ be any prime number that divides $q_1$. Define $e_i=\nu_p(q_i)$ to be the multiplicity of $p$ in $q_i$. Let $j\in \{1,\dots,k\}$ be the integer that maximizes
	\[
		\frac{e_j}{\log q_j}.
	\]
	Note that this maximum is unique since all the $q_i$ are multiplicatively independent. We now see that 
	\[
		\nu_p(q_j^{a_j}) = e_j\cdot a_j  = \left\lceil\frac{e_j}{\log q_j}\log(N)\right\rceil.
	\]
	Furthermore, we have that
	\[
		\nu_p(\lcm(q_1^{a_1},\dots, q_{j-1}^{a_{j-1}}, q_{j+1}^{q_{j+1}}, \dots,q_t^{a_t})) = \max_{i\neq j} \left\lceil\frac{e_i}{\log q_i}\log(N)\right\rceil.
	\]
	Therefore, we see that
	\[
		\gcd(q_j^{a_j}, \lcm(q_1^{a_1},\dots, q_{j-1}^{a_{j-1}}, q_{j+1}^{q_{j+1}}, \dots,q_t^{a_t})) < 
		q_j^{a_j}/p^{\delta \log(N)} = N^{1-\varepsilon},
	\]
	for some $\delta$, where $\delta \log(N)$ satisfies
	\[
		\delta \log(N) = \left\lceil\frac{e_j}{\log q_j}\log(N)\right\rceil-\max_{i\neq j} \left\lceil\frac{e_i}{\log q_i}\log(N)\right\rceil.
	\]
	For sufficiently large $N$ we have that $\delta$ is nonzero because the unique maximum is achieved at $q_j$.
\end{proof}
To state the corollary below, we need some additional notation. Note that any strongly $q$-multiplicative function $f$ satisfies
\[
	f(n)= \prod_{i\geq 0}f(\delta_i(n)).
\]
Similarly to the sum of digits function, for any interval $J\subset\NN$, we will denote by $f^J$ the function
\[
	f^J(n):=  \prod_{i\in J}f(\delta_i(n)).
\]  
\begin{corollary}\label{cor_independence}
	Let $q_1,\dots,q_t$ be multiplicatively independent integers, and let $f_i$ be strongly $q_i$-multiplicative functions, such that $|f_i(n)\leq 1|$ for all $n\in \NN$.\\
	Let $M\in \NN$ be sufficiently large and $q_i^{x_i}>M$, with $x_i$ minimal. Further, let $y<x_1$ be an integer, such that $q_1^{y}>M^{1-\varepsilon}$, where the $\varepsilon$ is as in Lemma \ref{small_gcd} and we assume that upon relabelling that $j=1$ in Lemma \ref{small_gcd}. Then, if $N>M^{t+\varepsilon}$, we have that
	\begin{align*}
	    \frac 1N &\sum_{n\leq N}  f_1^{[y,x_1)}(n) \prod_{i=2}^t f_i^{[0,x_i)}(n) = \\
        &\frac 1{q_1^{x-y}}\sum_{n\leq q_1^{y-x}} f_1^{[y,x)}(n) \frac 1{\lcm(q_2^{x},\dots,q_t^{x})}\sum_{n\leq \lcm(q_2^{x},\dots,q_t^{x})} \prod_{i=2}^t f_i^{[0,x)}(n) + O(N^{-\varepsilon})
	\end{align*}
\end{corollary}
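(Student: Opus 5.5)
The plan is to reduce the average over $n\le N$ to an exact average over one full period, and then split that period via the Chinese Remainder Theorem.

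First, periodicity. The function $f_1^{[y,x_1)}(n)$ depends only on $n \bmod q_1^{x_1}$, and each $f_i^{[0,x_i)}(n)$ depends only on $n\bmod q_i^{x_i}$. Hence the whole product $F(n):=f_1^{[y,x_1)}(n)\prod_{i\ge2}f_i^{[0,x_i)}(n)$ is periodic with period
\[
P:=\lcm(q_1^{x_1},\dots,q_t^{x_t})\le \prod_i q_i^{x_i}\ll M^{t}.
\]
Since $|F|\le1$, splitting $[1,N]$ into $\lfloor N/P\rfloor$ full periods plus one incomplete period gives
\[
\frac1N\sum_{n\le N}F(n)=\frac1P\sum_{n\le P}F(n)+O(P/N).
\]
The error is $O(M^t/N)=O(N^{-\varepsilon'})$ because $N>M^{t+\varepsilon}$. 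So it suffices to evaluate the full-period average exactly, up to a small error.

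Second, the CRT decoupling. Write $Q:=q_1^{x_1}$ and $L:=\lcm(q_2^{x_2},\dots,q_t^{x_t})$, and set $g:=\gcd(Q,L)$. By Lemma~\ref{small_gcd} (with $j=1$) we have $g<M^{1-\varepsilon}<q_1^{y}$; taking $y$ slightly larger if needed, we may assume $g\mid q_1^{y}$. A residue $n \bmod P$ corresponds bijectively to a pair $(u \bmod Q,\ v\bmod L)$ with $u\equiv v \bmod g$. The key observation is that $f_1^{[y,x_1)}(u)$ depends only on $\lfloor u/q_1^{y}\rfloor$, i.e.\ on the digits at positions $[y,x_1)$. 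The congruence $u\equiv v\bmod g$, by contrast, only constrains $u \bmod g$, which is determined by the low digits $[0,y)$ of $u$ once $g\mid q_1^y$. Therefore, for each fixed $v$, as $u$ ranges over the residues mod $Q$ compatible with $v$, the top block $\lfloor u/q_1^y\rfloor$ runs uniformly over $[0,q_1^{x_1-y})$, each value being hit exactly $q_1^{y}/g$ times. Summing over $u$ first thus gives
\[
\frac1P\sum_{n\le P}F(n)=\Big(\frac1{q_1^{x_1-y}}\sum_{w<q_1^{x_1-y}}f_1^{[0,x_1-y)}(w)\Big)\Big(\frac1L\sum_{v<L}\prod_{i\ge2}f_i^{[0,x_i)}(v)\Big),
\]
which is exactly the claimed main term, with the first factor rewritten via the shift $w\mapsto wq_1^y$.

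The main obstacle is the divisibility $g\mid q_1^{y}$. Lemma~\ref{small_gcd} only gives the size bound $g<M^{1-\varepsilon}$, not divisibility. Still, $g$ divides $q_1^{x_1}$, and for each prime $p\mid q_1$ we have $\nu_p(g)\le \nu_p(q_1^{x_1})$. I would first try to show $\nu_p(g)\le y\,\nu_p(q_1)$ for every such $p$, using the $p$-by-$p$ description of $g$ from the proof of Lemma~\ref{small_gcd}. If this fails for some primes, I would instead replace $q_1^y$ by the smallest power $q_1^{y'}$ divisible by $g$. Then $y'\le y+O(1)$ only if the exponents behave well, and the dropped digits $[y,y')$ contribute at most a bounded distortion; this would still leave an acceptable error after adjusting $\varepsilon$.
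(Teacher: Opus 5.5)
\textbf{There is a genuine gap in the decoupling step.} Your periodicity reduction is fine and matches the paper. The CRT step, however, rests on assuming that $g=\gcd(q_1^{x_1},L)$ divides $q_1^{y}$. That is false in general, and neither fallback repairs it. Lemma~\ref{small_gcd} controls only the one prime $p$ used to select $j$. For another prime $p'\mid q_1$, some other base can carry a larger power of $p'$, and then $\nu_{p'}(g)=x_1\nu_{p'}(q_1)$.

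Concretely, take $q_1=6$, $q_2=4$; with $p=3$ the lemma gives $j=1$. Then $g=\gcd(6^{x_1},4^{x_2})=2^{\min(x_1,2x_2)}=2^{x_1}$, since $2x_2\approx\log_2 M>x_1$. This divides $6^{y'}$ only for $y'\ge x_1$. So the ``smallest $y'$ with $g\mid q_1^{y'}$'' is $x_1$ itself, the block $[y',x_1)$ is empty, and nothing is left to factor. Moreover, discarding digits of an arbitrary strongly multiplicative $f_1$ with $|f_1|\le 1$ is not a ``bounded distortion'' even when only $O(1)$ digits are dropped. The factor $f_1^{[y,y')}$ is a genuine multiplicative factor of the main term, not a small additive error.

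\textbf{The missing idea: exact uniformity is not needed, only approximate uniformity.} Write $u=w_1q_1^{y}+w_3$ with $0\le w_3<q_1^{y}$. Fix $v\bmod L$ and the top block $w_1$. The condition $u\equiv v\bmod g$ then requires $w_3$ to lie in a single residue class mod $g$. An interval of length $q_1^{y}$ contains either $\lfloor q_1^{y}/g\rfloor$ or $\lceil q_1^{y}/g\rceil$ such $w_3$. So each pair $(w_1,v)$ is counted $q_1^y/g$ times up to an error less than $1$. The full-period average therefore equals your product of averages up to $O\bigl(q_1^{x_1-y}L/P\bigr)=O(g/q_1^{y})$, using $P=q_1^{x_1}L/g$. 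This is exactly the paper's route with its auxiliary $w_3<q_1^{y}$. In the example above, $g\approx M^{\log 2/\log 6}$, so this error is a negative power of $M$ even though divisibility fails.

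\textbf{A caveat on the hypothesis.} For $O(g/q_1^{y})$ to be a power saving, $q_1^{y}$ must exceed the bound $g<M^{1-\varepsilon}$ from Lemma~\ref{small_gcd} by a power of $M$, e.g.\ $q_1^{y}\ge M^{1-\varepsilon/2}$. The bare inequality $g<q_1^{y}$ would only give an error of $O(1)$.
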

\begin{proof}
	The function $f_1^{[y,x_1)}(n) \prod_{i=2}^t f_i^{[0,x_i)}(n)$ is periodic with period $\lcm(q_1^{x_1},\dots,q_t^{x_t})$. Hence, we may assume that $\lcm(q_1^{x_1},\dots,q_t^{x_t})|N$ by introducing an error term
	\[
		O\left(\frac{\lcm(q_1^{x_1},\dots,q_t^{x_t})}{N}\right) = O(N^{-\varepsilon})
	\]
	Let $w_1<q_1^{x_1-y}$ and $w_2<\lcm(q_2^{x_2},\dots,q_t^{x_t})$ be two positive integers. By Lemma \ref{small_gcd}, we know that we can find a $w_3\leq \gcd(q_1^{x_1}, \lcm(q_2^{x_2},\dots,q_t^{x_t}))$ such that
	\[
		w_1q_1^y+w_3 \equiv w_2 \bmod q_1^{x_1}
	\]
	and
	\[
		w_1q_1^y+w_3 \equiv w_2 \bmod \lcm(q_2^{x_2},\dots,q_t^{x_t}).
	\]
	In other words, the pair $(w_1q_1^y+w_3, w_2)$ corresponds to a unique residue class modulo $\lcm(q_1^{x_1},\dots,q_t^{x_t})$. Note also that $w_3<q_1^y$ by the assumptions on $y$. Therefore, we have that 
	\[
		f_1^{[y,x_1)}(w_1)=f_1^{[y,x_1)}(w_1+w_3).
	\]
	For any pair $(w_1,w_2)$ we can find either
	\[
		\frac{q_1^{x_1-y}\lcm(q_2^{x_2},\dots,q_t^{x_t})}{\lcm(q_1^{x_1},\dots,q_t^{x_t})} \; \text{ or } \; 	\frac{q_1^{x_1-y}\lcm(q_2^{x_2},\dots,q_t^{x_t})}{\lcm(q_1^{x_1},\dots,q_t^{x_t})} + 1
	\]
	many $w_3$. Because $w_1$ and $w_2$ are arbitrary, we can conclude that
	\begin{align*}
	    \frac 1N &\sum_{n\leq N}  f_1^{[y,x_1)}(n) \prod_{i=2}^t f_i^{[0,x_i)}(n) =\\
        &\frac 1{q_1^{x_1-y}}\sum_{n\leq q_1^{x_1-y}} f_1^{[y,x_1)}(n) \frac 1{\lcm(q_2^{x_2},\dots,q_t^{x_t})}\sum_{n\leq \lcm(q_2^{x_2},\dots,q_t^{x_t})} \prod_{i=2}^t f_i^{[0,x_i)}(n) + O(N^{-\varepsilon}),
	\end{align*}
	where we introduce another error term of order
	\[
		O\left(\frac{q_1^{x_1-y}\lcm(q_2^{x_2},\dots,q_t^{x_t})}{N}\right)=O(N^{-\varepsilon})
	\]
	by the implicit assumption that each $w_3$ appears equally often.
\end{proof}
\section{Finding the residue class $L'$} \label{sec_Proof_of_claims}
In this section we prove Proposition \ref{prop_shift}. Let $r= \gcd(a(q_1-1),b(q_2-1))$, we aim to find an $L'<q_1^{\xi_{1}-1}q_2^{\xi_{2}-1}$ such that
\begin{equation}\label{eq_def_f}
	f(n+d_j)-f(n) = as_{q_1}(n+d_j)-bs_{q_2}(n+d_j)-as_{q_1}(n)+bs_{q_2}(n) = jm
\end{equation}
for all $n\equiv L' \bmod \lcm(q_1^{\xi_{1}},q_2^{\xi_{2}})$ and $-J\leq j \leq J$ such that $\gcd(a-b,r)\mid\gcd(j,r)$.

We use Lemma \ref{small_gcd} as a substitute for the Chinese Remainder Theorem. Recall, that it states that if the integers $\xi_1$ and $\xi_2$ are minimal such that $q_1^{\xi_1}>N$ and $q_2^{\xi_2}>N$, then
\[
	\gcd(q_1^{\xi_1}, q_2^{\xi_2})< N^{1-\varepsilon}.
\]
In particular if we have an integer $n_1$ such that $n_1q_1^{\lceil(1-\varepsilon)\xi_1\rceil}<q_1^{\xi_1}$ and an integer $n_2<q_2^{\xi_2}$ then there exists an $n_3<q_1^{(1-\varepsilon)\xi_1}$ such that
\[
	n_1q_1^{\lceil(1-\varepsilon)\xi_1\rceil} + n_3 \equiv n_2 \bmod \gcd(q_1^{\xi_1}, q_2^{\xi_2})
\]
Therefore, we can now apply the Chinese remainder theorem and get that there exists an $n$ such that
\[ 
	n \equiv n_1q_1^{\lceil(1-\varepsilon)\xi_1\rceil}+n_3 \bmod q_1^{\xi_1} \text{ and } n \equiv n_2 \bmod q_2^{\xi_2}
\]

We further observe the identity $s_q(n)\equiv n \bmod q-1$. Let $r = \gcd(a(q_1-1), b(q_2-1))$. Then we have that
\[
as_{q_1}( L_1'+d_j)-as_{q_1}( L_1') - (bs_{q_2}( L_2+d_j)-bs_{q_2}( L_2)) \equiv (a-b)d_j \bmod r.
\]
By assumption, we have that $m\equiv 1 \bmod r$. Hence, we deduce that $\gcd(a-b, r) \mid \gcd (j, r)$ is a necessary condition to have a solution. Let us now show that it is sufficient.\\
Assume that $\gcd(a-b, r) \mid \gcd (j, r)$ is satisfied. Let $0 \leq y < r$ be the smallest number such that $(a-b)y \equiv j\bmod r$, say $y \leq q_1^{\tilde{c}}$.\\

We now define $d_j$. Let $d_j$ be the number whose digits in base $q_1$ are $(00\mathrel{{q_1}{-}{1}})^{(j+J+1)\beta}0^{\lfloor(2J+1)\beta/\varepsilon\rfloor}y$. Let $x= \lfloor(2J+1)\beta/\varepsilon\rfloor$ and $\xi_{1} = (2J+1)\beta + x + 
\tilde{c} + 1$. We can rewrite $d_j = q_1^{x}\sum_{\ell=-J}^{j}zq_1^{(\ell+J+1)\beta}$+y, where $z$ is the positive integer with the digit representation $(00\mathrel{{q}{-}{1}})^{\beta}$.\\

Note that if $n< q_1^{x}$ and $q_1^{\tilde{c}}\mid n$, then we have that
\[
	as_{q_1}(n+d_j)-as_{q_1}(n) = as_{q_1}(d_j),
\]
i.e. the equation above is independent of $n$.\\

Let $c_j$ be such that $q_2^{c_j}\leq d_j < q_2^{c_j+1}$. We aim to find two numbers $L_1' = q_1^{x}L_1<q_1^{(2J+1)\beta+x}$ and $L_2<q_2^{c_J+a(q_1-1)} = q_2^{\xi_2}$. We further want to choose $L_2$ such that $q_1^{\tilde{c}}|L_2$. By Lemma \ref{small_gcd}, we can find an $L_3$ such that $L_1'+L_3$ are compatible with $L_2$. We have that
\[
	L_1'+d_j < q_1^{\xi}.
\]
Therefore, if $n \equiv L_1'\bmod q_1^{\xi_1}$, then
\[
	as_{q_1}(n+d_j)-as_{q_1}(n) = as_{q_1}( L_1'+d_j)-as_{q_1}( L_1').
\]
Similarly, because
\[
	L_2+d_j< q_2^{x_2},
\]
we have that if $n\equiv L_2 \bmod q_2^{\xi_2}$, it follows that
\[
	bs_{q_2}(n+d_j)-bs_{q_2}(n) = bs_{q_2}( L_2+d_j)-bs_{q_2}( L_2).
\]
Combining these observations yields that if $L_1'$ and $L_2$ are such that
\begin{equation}\label{eq_L'}
	as_{q_1}( L_1'+d_j)-as_{q_1}( L_1') - (bs_{q_2}( L_2+d_j)-bs_{q_2}( L_2))=jm,
\end{equation}
we also have that
\[
	f(n)=jm
\]
whenever $n\equiv L_1'+L_3 \bmod q_1^{x_1}$ and $n\equiv L_2 \bmod q_2^{\xi_2}$. Therefore, we are left to find such values of $L_1'$ and $L_2$, satisfying \eqref{eq_L'}.\\

Our first aim is to define $L_2$. We see from that
\[
	a(s_{q_1}(n+d_j)-as_{q_1}(n)) \equiv  ad_j \bmod a(q_1-1)
\]
Hence, we need that
\begin{equation}\label{eq_mod_q2}
	ad_j - (bs_{q_2}(L_2+d_j)-bs_{q_2}(L_2))\equiv j \bmod a(q_1-1).
\end{equation}
If we assume that $L_2$ is of the form $q_2^{c_j+t_j}-q_2^{c_j}+u_j$ for some integer $t_j\geq0$ and $0\leq u_j<q_2^{c_j}$. Then we have that
\[
	bs_{q_2}(n+d_j)-bs_{q_2}(n) = b(t_j(q_2-1) + s_{q_2}(u_j+d_j)-s_{q_2}(u_j)).
\]
Therefore, equation \eqref{eq_mod_q2} can be written as 
\[
	b(t_j(q_2-1) + s_{q_2}(u_j+d_j)-s_{q_2}(u_j)) \equiv ad_j-j \bmod a(q_1-1).
\]
Because we have that $s_{q_2}(u_j+d_j)-s_{q_2}(u_j) = d_j + k(q_1-1)$ for some $k\in \ZZ$ and $(a-b)d_j \equiv j \bmod r$, we have that the above equation indeed has a solution for $t_j$ and we may assume that $0\leq t_j < a(q_1-1)$.\\

Since we do not know the digit representation of $d_j$ in base $q_2$, we want to apply Lemma \ref{Addition}. We will use the identity
\[
	s_{q_2}(n) = s_{q_2}^{[c_J+aq_1, \infty)}(n) + \sum_{j=-J}^{J}s_{q_2}^{[c_{j-1}+aq_1, c_j)}(n) + s_{q_2}^{[c_{j}, c_j+aq_1)}(n)
\]

Here we use that $c_{-J-1}=0$. Since $L_2+d_j< q_2^{c_J+aq_1}$, we have that $s_{q_2}^{[c_J+aq_1, \infty)}(L_2+d_j)=0$. It remains to bound the contribution of the other summands.\\

First we note that $s_{q_2}^{[c_{j}, c_j+aq_1)}(n)$ consists only of $aq_1$ many digits, hence we may bound this by $aq_1q_2$. This holds for all $-J\leq j\leq J$, their combined contribution to the sum is therefore bounded by $(2J+1)aq_1q_2$.\\

We will apply Lemma \ref{Addition} to each of the other summands. Because there might be a carry from the lower digits into this interval, we need to apply Lemma \ref{Addition} not only for the shifts $d_j$, but we also need to take into account these carries. Therefore, the set of shifts will be
\[
	U=\{d_j: -J\leq j\leq J\} \cup \{d_j+q_2^{c_k+q_1-1}:-J\leq j,k\leq J\}
\]
By Lemma \ref{Addition}, we know that there exists a $k_{j}$ such that $k_jq_1^{c_{j-1}+q_1}<q_1^{c_j}$ and
\[
	s_{q_1}^{[c_{j-1}+q_1, c_j)}(k_{j}q_1^{c_{j-1}+q_1}+u)-	s_{q_1}^{[c_{j-1}+q_1, c_j)}(k_{j}q_1^{c_{j-1}+q_1})<\log(N)
\]
for all $-J\leq j \leq J$ and for all $u\in U$. Using the observation above, we choose $t_{j}<a(q_1-1)$ such that
\[
	s_{q_1}\left(\sum_{\ell=-J}^{j}\left(k_{\ell}q^{c_{\ell-1}+q_1}+ q_2^{t_{\ell}+c_{\ell}}-q_2^{c_{\ell}}\right)+d_j\right)-	s_{q_1}\left(\sum_{\ell=-J}^{j}\left(k_{\ell}q^{c_{\ell-1}+q_1}+ q_2^{t_{\ell}+c_{\ell}}-q_2^{c_{\ell}}\right)\right)\equiv 0 \bmod a(q_1-1).
\]
We define
\[
	L_2' = \sum_{j=-J}^{J}k_jq^{c_{j-1}+q_1-1}+ q_2^{t_j+c_j}-q_2^{c_j}.
\]
We conclude that
\[
	bs_{q_2}(L_2'+d_j)-bs_{q_2}(L_2') < b(2J+1)\log(N) + (2J+1)abq_1q_2
\]
and that
\[
	b\left(s_{q_1}\left(L_2'+d_j\right)-s_{q_1}\left(L_2'\right)\right)\equiv ad_j-j \bmod a(q_1-1).
\]
The last step of defining $L_2$ is that we need to ensure that $q_1^{\tilde{c}}\mid L_2$. If $\gcd(q_1,q_2)=1$, we find $L_2$ by the Chinese Remainder Theorem and the congruence condition that
\[
	L_2 \equiv L_2' \bmod q_2^{c_j+a(q_1-1)} \text{ and } L_2 \equiv 0 \bmod q_1^{\tilde{c}}.
\]
If $\gcd(q_1,q_2)\neq 1$, we write $q_1 = q_{1,1}q_{1,2}$ and assume that $q_{1,1}$ is maximal such that $\gcd(q_{1,1},q_2)=1$ and $\gcd(q_{1,2},q_2) \neq 1$. It follows that 
\[
	q_{1,2}^{\tilde{c}}|q_2^{\tilde{c}'}.
\] 
We deduce that this condition only influences the digits at positions $0$ to $\tilde{c}'<c_{-J}$. In the application of Lemma \ref{Addition} to the lowest interval we can therefore impose the condition that $k_{-J} \equiv 0 \bmod q_{1,2}^{\tilde{c}}$ and deduce that
\[
	L_2' \equiv 0 \bmod q_{1,2}^{\tilde{c}}.
\]
Now we again apply the Chinese Remainder Theorem to the congruence condition
\[
	L_2 \equiv L_2' \bmod q_2^{c_j+a(q_1-1)} \text{ and } L_2 \equiv 0 \bmod q_{1,1}^{\tilde{c}}
\]
and find that $q_1^{\tilde{c}}\mid L_2$.\\

Next, we choose $L_1$. We follow the approach by Spiegelhofer \cite{Spiegelhofer2023a}. We use the digit representation of $d_j$ and observe the following three identities.
\[
\begin{array}{cccc}
	&0&0&0\\
	+&0&0&\mathrel{{q_1}{-}{1}}\\
	\cline{2-4}
	&0&0&\mathrel{{q_1}{-}{1}}
\end{array}
\]
\[
\begin{array}{cccc}
	&0&0&1\\
	+&0&0&\mathrel{{q_1}{-}{1}}\\
	\cline{2-4}
	&0&1&0
\end{array}
\]
\[
\begin{array}{cccc}
	&0&\mathrel{{q_1}{-}{1}}&1\\
	+&0&0&\mathrel{{q_1}{-}{1}}\\
	\cline{2-4}
	&1&0&0
\end{array}
\]
From this we see that in each block of $3$ digits, we can change the value of the sum of digits function by $0$ or by $\pm (\mathrel{{q_1}{-}{1}})$ without any carries into another block.\\
Without loss of generality we can assume that $\gcd(a-b,r)|\gcd(J,r)$ as otherwise we can replace $J$ by $J-1$. For $d_{-J}$, we can find a positive integer $x_{-J}<q_1^{3\beta}$ such that
\[
	a(s_{q_1}(x_{-J}q_1^{x}+d_{-J})-s_{q_1}(x_{-J}q_1^{x}))=a(q_1-1)n
\]
for any $-3\beta\leq n \leq 3\beta$. However, we notice that
\[
	|jm + bs_{q_2}(L_2+d_j)-bs_{q_2}(L_2)| \ll \log(N)^{0.51}\ll \beta
\]
and
\[
	a(q_1-1)\mid jm + bs_{q_2}(L_2+d_j)-bs_{q_2}(L_2) - ad_j.
\]
Therefore, we can find an $x_{-J}$ such that
\[
	a(s_{q_1}(x_{-J}q_1^{x}+d_{-J})-s_{q_1}(x_{-J}q_1^{bla})) = jm + bs_{q_2}(L_2+d_j)-bs_{q_2}(L_2).
\]
Now we proceed by induction on $j$. We assume that we have found $x_{k}<q_1^{3\beta}$, for $-J\leq k\leq j-1$ such that
\[
	a\left(s_{q_1}\left(\sum_{\ell = -J}^{j-1}x_{j}q_1^{x+(\ell+J+1)\beta}+d_{k}\right)-s_{q_1}\left(\sum_{\ell = -J}^{j-1}x_{j}q_1^{x+(\ell+J+1)\beta}\right)\right) - b (s_{q_2}(L_2+d_k) + bs_{q_2}(L_2)) = km
\]
for all $-J\leq k\leq j-1$. Since there are no carries between any of block of $3$ digits and by the structure of $d_j$, we have that
\begin{align*}
	&s_{q_1}(x_j+z)-s_{q_1}(x_j) + s_{q_1}\left(\sum_{\ell = -J}^{j-1}x_{j}q_1^{x+(\ell+J+1)\beta}+d_{j-1}\right)-s_{q_1}\left(\sum_{\ell = -J}^{j-1}x_{j}q_1^{x+(\ell+J+1)\beta}\right) =\\
	&\hspace{20pt}
	s_{q_1}\left(\sum_{\ell = -J}^{j}x_{j}q_1^{x+(\ell+J+1)\beta}+d_{j}\right)-s_{q_1}\left(\sum_{\ell = -J}^{j-1}x_{j}q_1^{x+(\ell+J+1)\beta}\right).
\end{align*}
If $\gcd(a-b,r)\nmid\gcd(j,r)$ we define $x_j$ such that
\[
	a(s_{q_1}(x_j+z)-s_{q_1}(x_j))=0
\]
Otherwise, we define $x_j$ such that
\begin{equation}\label{eq_def_x1}
	a(s_{q_1}(x_j+z)-s_{q_1}(x_j))=jm - \delta_j,
\end{equation}
where
\[
	\delta_j =  - bs_{q_2}(L_2+d_j)+bs_{q_2}(L_2) +a\left(s_{q_1}\left(\sum_{\ell = -J}^{j-1}x_{j}q_1^{x+(\ell+J+1)\beta}+d_{j-1}\right)-s_{q_1}\left(\sum_{\ell = -J}^{j-1}x_{j}q_1^{x+(\ell+J+1)\beta}\right)\right).
\]
Note that
\begin{align*}
	&a\left(s_{q_1}\left(\sum_{\ell = -J}^{j-1}x_{j}q_1^{x+(\ell+J+1)\beta}+d_{j-1}\right)-s_{q_1}\left(\sum_{\ell = -J}^{j-1}x_{j}q_1^{x+(\ell+J+1)\beta}\right)\right) = \\
	& \hspace{20pt} (j-1)m + bs_{q_2}(L_2+d_j)- bs_{q_2}(L_2) \leq \\
	& \hspace{20pt} (j-1)m + b(2J+1)\log(N) + (2J+1)abq_1q_2
\end{align*}
Hence we have that
\[
	|jm - \delta_j| \leq |(2j-1)m + 3(b(2J+1)\log(N) + (2J+1)abq_1q_2) \ll \log(N)^{0.51} \leq \beta
\]
Therefore, we conclude that there exists an $x_j<q_1^{3\beta}$ such that
\begin{equation}\label{eq_def_x}
	a(s_{q_1}(x_j+z)-s_{q_1}(x_j))=jm - \delta_j
\end{equation}
exists. We define $L_1=\sum_{\ell = -J}^{J}x_{j}q_1^{x+(\ell+J+1)\beta}$ and conclude that
\[
	a(s_{q_1}(L_1+d_{k})-s_{q_1}(L_1)) - b (s_{q_2}(L_2+d_k) + bs_{q_2}(L_2)) = km
\]
for all $-J\leq k\leq J$. By the observation at the beginning of this section we see that there exists an $L_3<q_1^{\lfloor(2J+1)\beta/\varepsilon\rfloor}$ such that
\[
	L_1q_1^{\lfloor(2J+1)\beta/\varepsilon\rfloor} + L_3 \equiv L_2 \bmod \gcd(q_1^{x_1},q_2^{x_2}).
\] 
Let $L'$ be the residue class modulo $\lcm (q_1^{x_1},q_2^{x_2})$ such that
\[
	L' \equiv L_1q_1^{\lfloor(2J+1)\beta/\varepsilon\rfloor} + L_3 \bmod q_1^{\xi_1} \text{ and } L' \equiv L_2 \bmod q_2^{\xi_2}.
\]
We conclude that if $n\equiv L' \bmod \lcm (q_1^{x_1},q_2^{x_2})$, then we have that
\[
	f(n+d_j)-f(n)=jm
\]
for all $-J\leq j\leq J$ such that $\gcd(a-b,r) \mid \gcd(j,r)$.
\section{Study of arithmetic Progression}\label{sec_Arith_Prog}
In this section we prove Proposition \ref{thm_concentration}, i.e. that for all $N<k<2N$ except at most $O\left(N\log(N)^{-A}\right)$ the following inequality holds.
\[
	\left|s_q(a+dk)-\frac{q-1}2\log_{q}\left(N\right) - \frac{q-1}2\log_{q}(d')-s_q^{(e)}(a)\right|\leq Jm
\]
where $d=q^ed'$.
\begin{proof}
	First we notice that $s_q(a+dk)=s_q^{[0,e)}(a)+s_q(a'+d'k)$, where $a'= \left\lfloor a/q^{e}\right\rfloor$. Therefore, it suffices to show that for almost all $k$ we have
	\[
		\left|s_q(a'+d'k)-\frac{q-1}2\log_{q}\left(N\right)\right|\leq Jm
	\]
	Let $\kappa=\min\{m:q^m\geq d'\}$. Then we have for all integers $k\geq 0$ that
	\[
		s_q(a'+d'k)= s_q^{[0,(2/3)\log(N))}((a'+d'k)) + s_q^{[(2/3)\log(N),\kappa)}((a'+d'k)) + s_q\left(\left\lfloor\frac {d'k}{q^{\kappa}}+\sigma\right\rfloor\right)
	\]
	where $\sigma=ap^{-\kappa}<1$. We will see that the admissibility assumption is only needed to treat the middle summand. Hence, in the case where $d'<d<N^{1/2}$ we can replace $(2/3)$ by any rational number $x<1$ such that  $x\log(N)=\kappa$ if $N$ is sufficiently large and hence the middle summand disappears.\\
	We now analyse the exceptional set of the left summand, the middle summand and the right summand separately.\\
	For the left summand, we see that for each $\omega\in \{0,\dots, N^{2/3}-1\}$ the equation
	\[
		(a+d'k)^{[0,(2/3)\log(N))}=\omega
	\]
	has at most $cN^{1/3}$ solutions since $d'$ is coprime to $q$. Additionally, we see that by Lemma \ref{lem_bound_on_exception}
	\[
		\#\left\{n<N^{2/3}: \left|s_q(n)-\frac{(q-1)}3 \log_q N\right|>JM/C\right\}=O\left(\frac{N^{2/3}}{\log(N)^A}\right).
	\]
	Combining these two estimates, we get that
	\[
		\#\left\{n<N: \left|s_q^{[0,(2/3)\log(N))}((a'+d'k))-\frac{(q-1)}3 \log_q N\right|>JM/C\right\}=O\left(\frac{N}{\log(N)^A}\right).
	\]
	For the middle summand, we aim to apply Lemma \ref{odd-elimination}. We define a sequence for integers $a_0, \dots, a_x$, for some $x\in \NN$ such that $2/3\log(N)=a_0<a_1<\dots<a_x=\kappa$, and $\eta\log(N)/2 \leq a_{i+1}-a_i \leq \eta\log(N)$. Such a sequence can be easily found by taking $a_{i+1}-a_{i}=\eta\log(N)$ for all $i$, until $\kappa - a_{i} < 2\eta\log(N)$. Now we set $a_{i+1}=a_x=\frac{a_i+\kappa}{2}$. This sequence satisfies the assumptions above.
	Hence, we get that
	\[
		 s_q^{[2/3\log(N),\kappa)}((a'+d'k)) =  \sum_{i=0}^{x-1} s_q^{[a_i,a_{i+1})}((a'+d'k))
	\]
	Because $\eta<1/5$, we can use Lemma \ref{odd-elimination} and we see that for each $\omega\in \{0,\dots, q^{a_{i+1}-a_i}\}$ the equation
	\[
		(a+d'k)^{[a_i,a_{i+1})}=\omega
	\]
	has $cq^{\log_q(n)-a_{i+1}+a_{i}}$ many solutions. By Lemma \ref{lem_bound_on_exception} we see that there are at most $O\left(\frac{q^{a_{i+1}-a_{i}}}{\log(N)^A}\right)$ many $\omega$ such that 
	\[
		\#\left\{n<q^{a_{i+1}-a_{i}}: \left|s_q(n)-\frac{(q-1)}2 (a_{i+1}-a_{i})\right|>JM/C\right\}
	\]
	We have that for each $0\leq i<x$, hence we have by the triangle inequality that
	\[
		\#\left\{n<N: \left|s_q^{[(2/3)\log(N),\kappa)}((a'+d'k))-\frac{(q-1)}2 (\kappa-\frac{2\log(N)}3)\right|>xJM/C\right\}=O\left(\frac{N}{\log(N)^A}\right).
	\]
	We now analyse the right summand:
	\[
		s_q\left(\left\lfloor\frac {d'k}{q^{\kappa}}+\sigma\right\rfloor\right)
	\]
	and notice that the equation
	\[
		\left\lfloor\frac{d'k}{q^{\kappa}}+\sigma\right\rfloor=\omega
	\]
	has at most $q$ solutions for each $\omega\in \{0,\dots, N\} $. Hence, we again use Lemma \ref{lem_bound_on_exception} and get similarly as above that
	\[
		\#\left\{n<N: \left|s_q\left(\left\lfloor\frac {d'k}{q^{\kappa}}+\sigma\right\rfloor\right)-\frac{(q-1)}2 \log(N)\right| >JM/C\right\} = O\left(\frac{N}{\log(N)^A}\right).
	\]
	We now pick $C=x+2$ and see that if $N$ is not in one of the exceptional sets of one of the summands we have that
	\[
		\left|s_q(a+dk)-\frac{q-1}2\log_{q}\left(N\right) - \frac{q-1}2\log_{q}(d')-s_q^{(e)}(a)\right|\leq Jm
	\]
	We also see by the union bound that there are at most $O\left(\frac{N}{(\log(N))^A}\right)$ many $N$ such that the above equation fails hence proving the proposition.
\end{proof}
\section{Equidistribution modulo $m$ along Arithmetic Progressions}\label{sec_uniform}
In this section we prove Proposition \ref{prop_equi}. We want to study the order of magnitude of 
\[
	\mathcal{A}:=\#\left\{ n\in[N,2N]: s_{q_i}(d_in+a_i)\equiv \ell_i\bmod m_i \text{ for all } 1\leq i \leq t
	\right\}.
\]
We reformulate equality \eqref{eq_prop_size} by summing over the indicator set and by specifying the $o(1)$ term. We obtain
\begin{equation}\label{equal_distr}
	\left|\sum_{\substack{N<n\leq 2N\\n\in \mathcal{A}}}1-\frac{N}{\prod_{i=1}^{t}m_i} \right| \ll \frac{N}{(\log(N))^A}.
\end{equation}

Now, we use the orthogonality of characters, a standard method to detect the congruence condition:
\[
\frac{1}{m_i}\sum_{h_i=0}^{m_i-1}e\left(\frac{h_i}{m_i}(\ell_i-s_{q_i}(n))\right)=\begin{cases}
	1, & \text{if } s_{q_i}(n)\equiv \ell_i \bmod m_i,\\
	0, & \text{otherwise}
\end{cases}
\]
Plugging this into equation \eqref{equal_distr}, we get that
\begin{equation}
	\left|\sum_{N<n\leq 2N}\frac{1}{\prod_{i=1}^{t}m_i}\sum_{h_1=0}^{m_1-1}\cdots \sum_{h_t=0}^{m_t-1}e\left(
	\sum_{i=1}^{t}\frac{h_i}{m_i}(\ell_i-s_{q_i}(n))\right)-\frac{N}{\prod_{i=1}^{t}m_i} \right| \ll \frac{N}{(\log(N))^A}.
\end{equation}
We switch the order of summation and consider two cases depending on the values of $h_i$. If the tuple $(h_1,\dots, h_r)= (0,\dots,0)$ we get that
\begin{equation}
	\left|\frac{N}{\prod_{i=1}^{t}m_i} - \frac{N}{\prod_{i=1}^{t}m_i}\right| = 0.
\end{equation}
Therefore, it is enough to prove that for any tuple $(h_1,\dots, h_r)\neq(0,\dots,0)$ the following equation holds:
\begin{equation}
	\left|\sum_{N<n\leq 2N}e\left(\sum_{i=1}^{t}\frac{h_i}{m_i}s_{q_i}(nd_i+a)\right)\right| \ll \frac{N}{(\log(N))^A}.
\end{equation}
We prove even a stronger statement:
\begin{proposition}\label{prop_theta_equi}
	Let $H$ be a positive constant. Let $q_1, \dots, q_t$ be a collection of multiplicatively independent bases. Let $d_i, a_i$ be a collection of positive integers such that, for a sufficiently small $\eta$, $d_i$ is $N^{\eta}$-admissible in base $q_i$ and $a_i<d_i<N^H$. Let $\theta_i$ be a collection of real numbers, not all zero. Then there exists a $j\in \{1,2,\dots, t\}$ such that $\theta_j\neq 0$ and the following holds:
	\begin{equation}
		\left|\sum_{N<n\leq 2N}e\left(\sum_{i=1}^{t}\theta_is_{q_i}(nd_i+a_i)\right)\right| \ll N \exp\left(-c||(q_j-1)\theta_j||^2\log N\right)
	\end{equation}
\end{proposition}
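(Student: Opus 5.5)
The plan is to use the iterated van der Corput inequality (Lemma \ref{interated_van_der_corput}) to pass from the exponential sum to a correlation sum. In the correlation sum, carry propagation allows every digit function to be truncated to a bounded window. Lemma \ref{small_gcd} and Corollary \ref{cor_independence} then separate base $q_j$ from the other bases. What remains is a product of one-base averages, which are bounded by the classical Gelfond-type estimate for $\mathrm{e}(\theta s_q(\cdot))$.

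First choose $j$ by Lemma \ref{small_gcd}, applied to the bases with $\theta_i\neq 0$. Set $g(n)=\mathrm{e}\bigl(\sum_i\theta_i s_{q_i}(nd_i+a_i)\bigr)$ and apply Lemma \ref{interated_van_der_corput} with $Q$ comparable to $t$ (the number of bases). Take shifts $M_\ell$ equal to suitable powers $q_j^{\mu_\ell}$, with $R=N^{\delta}$ for small $\delta$. In $K(r_1M_1,\dots,r_QM_Q)$ the shifts act on $nd_i+a_i$ as shifts by $r_\ell M_\ell d_i$. By Lemma \ref{lem_carries}, for all but a proportion $O(N^{-\delta'})$ of $n$, each $s_{q_i}$-difference depends only on the digits of $nd_i+a_i$ in a window $[0,\lambda_i)$ with $q_i^{\lambda_i}\approx N^{\delta''}\max_\ell r_\ell M_\ell d_i$. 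The Gowers-type product then becomes a product of truncated strongly $q_i$-multiplicative functions of $nd_i+a_i$.

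Next, remove the linear map $n\mapsto nd_i+a_i$. Lemma \ref{odd-elimination}, which is where $N^\eta$-admissibility of $d_i$ enters, shows that the relevant digit blocks of $nd_i+a_i$ are essentially equidistributed as $n$ ranges over $[N,2N]$. Because the base-$q_j$ shifts are powers of $q_j$, the shifted contribution of base $q_j$ depends only on the high digit block $[y,x_j)$. The contributions of the other bases depend on low blocks that vary jointly. Corollary \ref{cor_independence} factorises the average into the base-$q_j$ part times the rest, up to $O(N^{-\varepsilon})$, and the rest is bounded trivially by $1$. The base-$q_j$ part is a Gowers-type norm of $\mathrm{e}(\theta_j s_{q_j}(\cdot))$ over a full block of about $c\log N$ digits. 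Digitwise independence, in the form of the product formula $\prod_k \bigl|\frac1{q_j}\sum_{b<q_j}\mathrm{e}(\theta_j b)\bigr|$ with carry pairs analysed as in Lemma \ref{Addition}, bounds it by $\exp(-c\|(q_j-1)\theta_j\|^2\log N)$. The factor $\|(q_j-1)\theta_j\|$ appears because $\theta_j s_{q_j}$ is invariant mod $1$ when $(q_j-1)\theta_j\in\ZZ$.

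Finally, take the $2^Q$-th root. Averaging over $r\in\{1,\dots,R\}^Q$ costs only an exceptional set where carries reach far, which is controlled by Lemma \ref{lem_carries}. The error terms $(\sum M_\ell)R/N+1/R$ are $N^{-\delta}$. These are acceptable because the target bound is at least $N^{1-c'}$, since $\|\cdot\|\le 1/2$; shrinking $c$ absorbs them. \textbf{The main obstacle} is the factorisation step when $d_i$ is huge (up to $N^H$). The windows then lie far above position $\log N$, and Lemma \ref{odd-elimination} only gives equidistribution up to constants $c_1,c_2$, not asymptotically. I would first try to fix the low digits of $n$ in residue classes modulo $q_j^{\cdot}$, which turns the base-$j$ block into an exactly periodic function. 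Then I would use the multiplicative independence of the bases, together with Lemma \ref{no-long-subsequences}-type control, to make the other bases asymptotically independent of it.
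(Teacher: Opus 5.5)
\textbf{There is a genuine gap, and it is exactly the one you flag as the main obstacle.} It is not a technicality but the core of the proposition.

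With $Q\asymp t$ differencing steps and shifts $M_\ell=q_j^{\mu_\ell}$, two things happen. Lemma \ref{lem_carries} only discards the digits of $nd_i+a_i$ above position roughly $\log_{q_i}(RM_\ell d_i)$. The powers of $q_j$ only freeze the lowest $\mu_\ell$ digits in base $q_j$. What survives, in every base, is a window that contains all digits between positions $\log_{q_i}N$ and $\log_{q_i}d_i$.

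As functions of $n\in[N,2N]$, these middle digits are governed by the fractional parts $\{nd_i/q_i^{k}\}$. They are not periodic with any period below $N$. This has three consequences:
\begin{enumerate}
\item Corollary \ref{cor_independence} cannot be applied.
\item A complete Gowers norm over a full digit block is unavailable.
\item Lemma \ref{odd-elimination} does not help: it gives equidistribution only for a single block of fewer than $\log_{q_i}N$ digits, and only up to the constants $c_1,c_2$.
\end{enumerate}
So your argument at best covers the case where all $d_i$ lie below a small fixed power of $N$. Section \ref{sec_proof_main_theorem}, however, applies the proposition to progressions whose step is a huge power of their length. Handling arbitrarily large step size is precisely the paper's new contribution. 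Your proposed remedy (fixing low digits of $n$, plus Lemma \ref{no-long-subsequences}-type control) does not touch these middle digits.

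\textbf{The missing idea is to eliminate the middle digits block by block, using many more differencing directions.} The paper proceeds as follows.
\begin{enumerate}
\item It sets $a=\lfloor\eta\log_{q_1}N/6\rfloor$ and takes $Q\asymp\sum_i\log_{q_i}d_i/a=O(tH/\eta)$.
\item Using the first part of Lemma \ref{odd-elimination}, it picks shifts $M_k\le q_1^{O(a)}$ with $q_1^a\mid M_k$ and $(M_kd_i)^{[y_k,x_k)}=1$. This is where $N^\eta$-admissibility is really needed. The windows $[y_k,x_k)$ have length $\asymp a$ and form a descending chain, starting just above $\log_{q_i}d_i$ and running down to height $O(a)$.
\item The $k$-th shift then adds only $O(R)$ to the block $[y_k,x_k)$. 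So the upper part of that block is unchanged, except when a long run of digits $q_i-1$ occurs.
\item The second part of Lemma \ref{odd-elimination} shows such runs are rare. The sign alternation in the $\varepsilon$-product then cancels that block.
\item The remaining digits are handled separately. The top digits are removed by $M_1=q_1^a$ together with Lemma \ref{lem_carries}. A final shift handles the bottom blocks. The condition $q_1^a\mid M_k$ freezes the digits in $[0,a)$ in base $q_1$.
\end{enumerate}
After this, only windows of length $O(\eta\log N)$ survive. Only at that point do Corollary \ref{cor_independence} and the normalisation $r_\ell M'_\ell q_1^a d_1\mapsto r_\ell q_1^a$ become available; the latter uses $\gcd(M'_\ell d_1,q_1)=1$.

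\textbf{A secondary gap remains at the end.} The resulting average is a genuine Gowers norm of $\mathrm{e}(\theta_1 s_{q_1})$. The Gelfond product formula controls only the $U^1$-type sum, and only in terms of $\|\theta\|$. For example, when $\theta=1/(q-1)$ the function is a linear phase, so its Gowers norm is $1$ even though $\|\theta\|\neq 0$. The bound in terms of $\|(q_1-1)\theta_1\|$ therefore needs a separate Gowers-norm estimate, which the paper imports from \cite{Jelinek2025}.
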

We see that using $m_i\leq c (\log N)^{1/2}/(\log\log N)^{1/2+\varepsilon}$ and that $\gcd(q_i-1,m_i)=1$, we see that we pick 
\[
	\theta_i = \frac{h_i}{m_i}.
\]
We recall that not all $h_i$ are $0$. If $h_i\neq 0$ we observe that $||(q_i-1)\theta_i||\notin \ZZ$. Therefore, we get an arbitrary logarithmic saving from the proposition above. Hence, we have proven Proposition \ref{prop_equi} under the assumption of Proposition \ref{prop_theta_equi}.

Before we prove Proposition \ref{prop_theta_equi}, we provide a sketch of the proof to highlight the main steps and ideas in the proof.

\noindent\textbf{Sketch of the Proof of Proposition \ref{prop_theta_equi}}\\
Let
\[
    S(\theta_1, \dots, \theta_t) := \frac 1N \sum_{N<n\leq 2N}e\left(\sum_{i=1}^{t}\theta_is_{q_i}(nd_i+a_i)\right).
\]
First we apply Lemma \ref{interated_van_der_corput} to get 
\begin{equation*}
	|S(\theta_1, \dots, \theta_t)|^{2^Q} \ll \frac {1}{R^Q}\sum_{r\in\{1,\dots,R\}^Q}|K(r_1M_1,\dots,r_{Q}M_{Q})| + O\left(\frac{(M_1+\dots + M_{Q})R}{N}+\frac 1R\right),
\end{equation*}
where
\begin{equation*}
	K(r_1M_1,\dots, r_QM_Q) := \frac 1N \sum_{N<n\leq 2N} \sum_{\varepsilon\in \{0,1\}^Q}\mathcal{C}^{|\varepsilon|}
	\e\left(\sum_{i=1}^{t}\theta_i s_{q_i}\left(\left(n+\sum_{\ell = 1}^{Q}\varepsilon_{\ell}r_{\ell}M_{\ell}\right)d_i+a_i\right)\right).
\end{equation*}

Then we choose the $M_{\ell}$ carefully such that we can show that most digits do not contribute to the sum, i.e. such that we are left with
\begin{align*}
	K(r_1M_1,\dots, r_QM_Q) = \frac 1N \sum_{N<n\leq 2N} \sum_{\varepsilon\in \{0,1\}^Q}\mathcal{C}^{|\varepsilon|}
	\e\Bigg(&\theta_1 s_{q_1}^{[a, a+a_1)}\left(\left(n+\sum_{\ell = 1}^{Q} \varepsilon_{\ell}r_{\ell}M_{\ell}\right)d_1 			+a_1\right)\\
	&+\sum_{i=2}^{t}\theta_i s_{q_i}^{[0, a_i)} \left(\left(n+\sum_{\ell = 1}^{Q} \varepsilon_{\ell}r_{\ell}M_{\ell}\right)d_i+a_i\right)\Bigg)\\
	+ O\left(R^Q\left(\frac{1}{q_1^{\rho'_1-a-\rho}}\right)\right).&
\end{align*}
By Corollary \ref{cor_independence}, we can separate the sum above into a sum only involving the base $q_1$ and one which involves all the remaining bases to get
	\begin{align*}
	\hspace{-10pt}K(r_1M_1,\dots, r_QM_{Q}) &= \frac 1{q_1^{a_1}} \sum_{1\leq n \leq q_1^{a_1}} \sum_{\varepsilon\in \{0,1\}^Q}\mathcal{C}^{|\varepsilon|}
	\e\left(\theta_1 s_{q_1}^{[a,a+a_1)}\left(\left(nq_1^{a}+\sum_{\ell=1}^{Q}\varepsilon_{\ell}r_{\ell}M_{\ell}\right)d_1 +a_1\right)\right)\\ 
	&\hspace{-55pt}\times \frac 1{\lcm(q_2^{a_2},\dots,q_t^{a_t})} \sum_{n=0}^{\lcm(q_2^{a_2},\dots,q_t^{a_t})} \sum_{\varepsilon\in \{0,1\}^Q}\mathcal{C}^{|\varepsilon|}
	\e\left(\sum_{i=2}^{t}\theta_i s_{q_i}^{[0,a_i)}\left(\left(n+\sum_{\ell=1}^{Q}\varepsilon_{\ell}r_{\ell}M_{\ell}\right)d_i +a_i\right)\right).
\end{align*}
In the proof, we estimate the part after $\times$ trivially. Afterwards we manipulate the sum over $q_1$ in such a way that, when we put the estimate of $K(r_1M_1,\dots,r_QM_Q)$ back into $S(\theta_1,\dots,\theta_t)$, we get
\begin{align*}
	|S(\theta_1, \dots, \theta_t)|^{2^{Q+1}} &\ll \frac {1}{q_1^{a_1(Q+2)}} \sum_{1\leq n \leq q_1^{a_1}}  \sum_{r\in\{1,\dots,q_1^{a_1}\}^{Q+1}} \sum_{\varepsilon\in \{0,1\}^{Q+1}}\mathcal{C}^{|\varepsilon|}
	\e\left(\theta_1 s_{q_1}^{[0,a_1)}\left(n+\sum_{\ell=1}^{Q+1}\varepsilon_{\ell}r_{\ell}\right)\right),
\end{align*}
which is a Gowers norm. By the author's paper \cite{Jelinek2025}, it is known that this sum is at most
\[
	N^{c||(q_1-1)\theta||^2}
\]
hence proving Proposition \ref{prop_theta_equi}.

Now we rigorously prove the proposition.

\begin{proof}[Proof of Proposition \ref{prop_theta_equi}]
	Without loss of generality we can assume that each $\theta_i$ is nonzero since otherwise we can reduce the summation. We can further assume that $\gcd(d_i,q_i)=1$, because if $d_i=q_i^ed_i'$, then
	\[
		s_{q_i}(nd_i+a_i) = s_{q_i}(nd_i'q_i^e+a_i) = s_{q_i}(nd_i'+\tilde{a_i}) + s_{q_i}^{[0,e)}(a_i).
	\]
	This gives that
	\begin{align*}
		 & \hspace{-40pt}\left|\sum_{N<n\leq 2N}\e\left(\sum_{i=1}^{t}\theta_is_{q_i}(nd_i+a_i)\right)\right| \\
		=&\left|\sum_{N<n\leq 2N} \e\left(\sum_{i=1}^{t}\theta_is_{q_i}(nd_i'+\tilde{a_i})\right)e\left(\sum_{i=1}^{t}\theta_is_{q_i}^{[0,e)}(a_i)\right)\right| \\
		=&\left|e\left(\sum_{i=1}^{t}\theta_is_{q_i}^{[0,e)}(a_i)\right)\right|\cdot \left|\sum_{N<n\leq 2N}\e\left(\sum_{i=1}^{t}\theta_is_{q_i}(nd_i'+\tilde{a_i})\right)\right| \\
		=&\left|\sum_{N<n\leq 2N}\e\left(\sum_{i=1}^{t}\theta_is_{q_i}(nd_i'+\tilde{a_i})\right)\right|.
	\end{align*}
	Therefore, from now on we assume that $\gcd(d_i,q_i)=1$. Let 
	\begin{equation}
		S(\theta_1, \dots, \theta_t) = \frac 1N \left|\sum_{N<n\leq 2N}\e\left(\sum_{i=1}^{t}\theta_is_{q_i}(nd_i+a_i)\right)\right|.
	\end{equation}
	We need to show that 
	\begin{equation}
		|S(\theta_1, \dots, \theta_t)| \ll N^{-c||(q_1-1)\theta_0||^2}.
	\end{equation}

	Let $a=\left\lfloor \eta\log_{q_1}(N)/6 \right\rfloor$ throughout this proof, and let $R=q_1^{\rho}$. Let  the $z_i$ are minimal such that $q_i^{z_i}>q_1^{a+\rho}$. We choose $\rho<a$ maximally such that it satisfies
	\[
		\gcd\left(q_1^{a+\rho},\lcm\left(q_2^{z_2},\dots, q_t^{z_t}\right)\right)\leq q_1^{a}.
	\]
	Such a $\rho$ exists by Lemma \ref{small_gcd} if $N$ is sufficiently large after maybe relabelling the $q_i$. We now apply Lemma \ref{interated_van_der_corput} with $R=q_1^{\rho}$ and some $Q$, which will be specified later. This yields
	\begin{equation}\label{eq_spower}
		|S(\theta_1, \dots, \theta_t)|^{2^Q} \ll \frac {1}{R^Q}\sum_{r\in\{1,\dots,R\}^Q}|K(r_1M_1,\dots,r_{Q}M_{Q})| + O\left(\frac{(M_1+\dots + M_{Q})R}{N}+\frac 1R\right),
	\end{equation}
	where
	\begin{equation}
		K(r_1M_1,\dots, r_QM_Q) := \frac 1N \sum_{N<n\leq 2N} \sum_{\varepsilon\in \{0,1\}^Q}\mathcal{C}^{|\varepsilon|}
		\e\left(\sum_{i=1}^{t}\theta_i s_{q_i}\left(\left(n+\sum_{\ell = 1}^{Q}\varepsilon_{\ell}r_{\ell}M_{\ell}\right)d_i+a_i\right)\right).
	\end{equation}
	
	The aim is to replace each $s_{q_i}(n)$ by $s_{q_i}^{J}(n)$, for some short interval $J$. To achieve this, we choose the $M_{\ell}$ one after the other, such that in each base $q_i$ and for every other small enough interval $I$, $I\cap J =\emptyset$, we can find an index $k$ such that
	\[
		s_{q_i}^{I}\left(\left(n+\sum_{\substack{\ell = 1\\\ell \neq k}}^{Q}\varepsilon_{\ell}r_{\ell}M_{\ell}\right)d_i+a_i\right) = s_{q_i}^{I}\left(\left(n+\sum_{\substack{\ell = 1\\\ell \neq k}}^{Q}\varepsilon_{\ell}r_{\ell}M_{\ell}\right)d_i+a_i+r_{k}M_{k}d_i\right)
	\]
	for almost all $\epsilon_{\ell} \in \{0,1\}\setminus\{k\}$, $N<n\leq 2N $, $r_{\ell}\in\{1,2,\dots, R\}$ and $r_k\in\{1,2,\dots, R\}$. As a consequence of this, we can deduce that
	\begin{equation}\label{eq_observation}
		K(r_1M_1,\dots, r_QM_Q) = \frac 1N \sum_{N<n\leq 2N} \sum_{\varepsilon\in \{0,1\}^Q}\mathcal{C}^{|\varepsilon|}
		\e\left(\sum_{i=1}^{t}\theta_i s_{q_i}^{[0,\infty)\setminus I}\left(\left(n+\sum_{\ell = 1}^{Q}\varepsilon_{\ell}r_{\ell}M_{\ell}\right)d_i+a_i\right)\right).
	\end{equation}
	This is, because $|(\epsilon_1,\dots, \epsilon_{k-1},0,\epsilon_{k+1}, \epsilon_Q)|\not\equiv|(\epsilon_1,\dots, \epsilon_{k-1},1,\epsilon_{k+1}, \epsilon_Q)|\bmod2$, and hence the value of
	\[
		s_{q_i}^{I}\left(\left(n+\sum_{\substack{\ell = 1\\\ell \neq k}}^{Q}\varepsilon_{\ell}r_{\ell}M_{\ell}\right)d_i+a_i\right)
	\]
	appears in the exponent, once with a positive sign and once with a negative sign. Therefore, we conclude that this interval of digits does not contribute to the value of the exponent at all.\\
	
	We will now find such intervals $I$ and integers $M_{\ell}$. An important restriction on the $M_{\ell}$ is that in base $q_1$, $\delta_a\left(M_{\ell}d_1\right)$, the digit at position $a$, is not $0$. We recall that $a=\left\lfloor \eta\log_{q_1}(N)/6 \right\rfloor$ as above.\\
	
	First let $M_1=q_1^{a}$. We now use Lemma \ref{odd-elimination} to choose the values of the other $M_{\ell}$. Let $t=q_1^{a}$ and let for $k\geq 3$\\
	\begin{center}
		\begin{tabular}{lr}
			\vspace{3pt}$x_2=\left\lfloor\log_{q_1}(d_1)\right\rfloor + 2a$ & \hspace{10pt} $x_k=x_{k-1}-a$\\
			$y_2=\left\lfloor\log_{q_1}(d_1)\right\rfloor$ & \hspace{10pt} $y_k=y_{k-1}-a$
		\end{tabular}
	\end{center}
	Then for every $k\geq 2$ such that $y_k\geq 6a$, we can apply Lemma \ref{odd-elimination} to find an $M_k\leq q_1^{24a}$ such that $M_k\equiv 0 \bmod q_1^{a}$ and
	\[
		(M_kd_1)^{[y_k,x_k)} = 1.
	\]
	Let $k_1'$ be the highest index used. Further, since $\gcd(q_1,d_1)=1$, we can find $M_{k_1'+1}\leq q_1^{7a}$ such that $M_{k_1'+1}\equiv0\bmod q_1^a$ and 
	\[
		M_{k_1'+1}d_1=q_1^a \bmod q_1^{7a}.
	\]
	Hence, we choose $k_1=k_1'+1$. We repeat this procedure for all the other bases. In base $q_{i}$ we have the following construction:\\
	Let $t=q_1^{a}$, $f_i = \left\lfloor\frac{\log(q_1)}{\log(q_i)}\right\rfloor$ and let for $k\geq k_{i-1}+2$\\
	\begin{center}
		\begin{tabular}{lc}
			\vspace{3pt}$x_{k_{i-1}+1}=\left\lfloor\log_{q_i}(d_i)\right\rfloor + 2af_i$ & $x_k=x_{k-1}-af_i$\\
			$y_{k_{i-1}+1}=\left\lfloor\log_{q_i}(d_i)\right\rfloor$ & $y_k=y_{k-1}-af_i$
		\end{tabular}
	\end{center}
	Then for every $k\geq k_{i-1}+1$ such that $y_k\geq 6af_i$, we can apply Lemma \ref{odd-elimination} to find an $M_k\leq q_1^{28a}$ such that $M_k\equiv 0 \bmod q_1^{a}$ and
	\[
		(M_kd_1)^{[y_k,x_k)} = 1.
	\]
	Let $k_i'$ denote again the highest index we have used so far. Because $\gcd(q_i,d_i)=1$, we can find 
	\[
		M_{k_i'+1}\leq q_i^{7af_i} \leq q_1^{14a}
	\]
	such that $M_{k_i'+1}\equiv0\bmod q_1^a$ and 
	\[
	M_{k_i'+1}d_i=\gcd\left(q_1^a,q_i^{7af_i}\right) \bmod q_i^{7af_i}.
	\]
	We now define $k_i$ to be $k_i=k_i'+1$.\\
	
	We have now chosen all the values of the $M_{\ell}$'s. In the next step, we pick intervals $I$ and compare the values of 
	\[
		\left(\left(n+\sum_{\ell = 1}^{Q}\varepsilon_{\ell}r_{\ell}M_{\ell}\right)d_i+a_i\right)^{I}
	\]
	for different values of the $\epsilon_{\ell}$ and $r_{\ell}$ in the different bases $q_i$. We distinguish four classes of intervals $I$.\\
	
	First, let $I=[\lfloor\log_{q_i}(d_i)\rfloor+2af_i,\infty)$ We want to find the number of combinations of $N< n\leq 2N$, $r_1$, $r_{\ell}\in\{1,2,\dots,R\}$ and $\epsilon_{\ell}\in\{0,1\}$ for $\ell \in \{2,\dots, Q\}$ such that in base $q_i$
	\[
		\left(\left(n+\sum_{\ell = 2}^{Q}\varepsilon_{\ell}r_{\ell}M_{\ell}\right)d_i+a_i\right)^{I}
			\neq
		\left(\left(n+\sum_{\ell = 2}^{Q}\varepsilon_{\ell}r_{\ell}M_{\ell}\right)d_i+a_i+r_1M_1d_i\right)^{I}.
	\]
	Note that $M_1d_i=q_1^ad_i $, hence, we see by the carry lemma (Lemma \ref{lem_carries}) that there are at most
	\[
		2^QR^Q\left(\frac{N}{q^a}+2q^a\right)
	\]
	many such $N, r_{\ell}, \epsilon_{\ell}$ for every base $q_i$.\\
	
	Second, let $I=[y_k+af_i,x_k)$ for some $2\leq k\leq k_t$ such that $k\notin \{k_1,\dots, k_{t}\}$. Further let $i=1$ if $k<k_1$, otherwise let $i$ be such that $k_{i-1}<k<k_{i}$. By construction, we have that
	\[
		(M_kd_i)^{[y_k,x_k)} = 1.
	\]
	Hence, we have
	\[
		(r_kM_kd_i)^{[y_k,x_k)} \leq R = q_1^{\rho}.
	\]
	Again, we want to find the number of $N< n\leq 2N$, $r_k$, $r_{\ell}\in\{1,2,\dots,R\}$ and $\epsilon_{\ell}\in\{0,1\}$ for $\ell \in \{1,\dots, Q\}\setminus\{k\}$ such that in base $q_i$
	\[
		\left(\left(n+\sum_{\substack{\ell = 1\\\ell\neq k}}^{Q}\varepsilon_{\ell}r_{\ell}M_{\ell}\right)d_i+a_i\right)^{I}
				\neq
		\left(\left(n+\sum_{\substack{\ell = 1\\\ell\neq k}}^{Q}\varepsilon_{\ell}r_{\ell}M_{\ell}\right)d_i+a_i+r_{k}M_{k}d_i\right)^{I}.
	\]	
	We have that the only possibility such that these two values are not equal is if
	\[
		\left(\left(n+\sum_{\substack{\ell = 1\\\ell\neq k}}^{Q}\varepsilon_{\ell}r_{\ell}M_{\ell}\right) d_i +a_i\right)^{[y_k+\rho+1,x_k)} = q_i^{x_k-y_k-\rho-1}-1.
	\]
	Therefore, by fixing all $\epsilon_i$ and all $r_i$, we see by second part of Lemma \ref{odd-elimination} that there are at most
	\[
		O\left(\frac{N}{q_i^{x_k-y_k-\rho-1}}\right)
	\]
	such numbers $n$. We conclude that the size of the exceptional set is at most
	\[
		O\left(R^Q\frac{N}{q_i^{x_k-y_k-\rho-1}}\right).
	\]
	
	Third, let $I=[\rho_i',y_{k_i-1}+af_i)$, for some positive integer $a+2\rho>\rho_1'>a+\rho$ to be chosen later. Further, let $\rho_i'= \left\lfloor(\rho_1')\frac{\log(q_1)}{\log(q_t)}\right\rfloor$. By the termination condition in the recursive definition of the $y_k$ we can deduce that the interval $I$ is non-empty. In base $q_i$, we have that
	\[
		(r_{k_i}M_{k_i}d_i)^{[0,y_{k_i-1}+af_i)} \leq Rq_1^{a} = q_1^{a+\rho}.
	\]
	Similarly to above, the only way that
	\[
		\left(\left(n+\sum_{\substack{\ell = 1\\\ell\neq k_i}}^{Q}\varepsilon_{\ell}r_{\ell}M_{\ell}\right)d_i+a_i\right)^{I}
				\neq
		\left(\left(n+\sum_{\substack{\ell = 1\\\ell\neq k_i}}^{Q}\varepsilon_{\ell}r_{\ell}M_{\ell}\right)d_i+a_i+r_{k_i}M_{k_i}d_i\right)^{I}
	\]	
	is that
	\[
		\left(\left(n+\sum_{\substack{\ell = 1\\\ell\neq k_i}}^{Q}\varepsilon_{\ell}r_{\ell}M_{\ell}\right) d_i +a_i\right)^{\Big[\left\lceil (a+\rho) \frac{\log(q_1)}{\log(q_t)} \right\rceil,\rho'_i\Big)} = q_i^{\rho'_i-\left\lceil (a+\rho) \frac{\log(q_1)}{\log(q_t)} \right\rceil}-1\;.
	\]
	Therefore, by fixing again all $\epsilon_i$ and all $r_i$, we see that because 
    \[
        N>q_i^{y_{k_i-1}+af_i}
    \]
    that
    \[
		\left(\left(n+\sum_{\substack{\ell = 1\\\ell\neq k_i}}^{Q}\varepsilon_{\ell}r_{\ell}M_{\ell}\right) d_i +a_i\right)^{\Big[\left\lceil (a+\rho) \frac{\log(q_1)}{\log(q_t)} \right\rceil,\rho'_i\Big)}
	\]
    attains every value between 0 and $q_i^{\rho'_i-\left\lceil (a+\rho) \frac{\log(q_1)}{\log(q_t)} \right\rceil}-1$ equally often. Hence, we have that there are at most
	\[
		O\left(\frac{N}{q_i^{\rho'_i-\left\lceil (a+\rho) \frac{\log(q_1)}{\log(q_t)} \right\rceil}}\right) = O\left(\frac{N}{q_1^{\rho'_1-a-\rho}}\right)
	\]
	many numbers $n$ such that
    	\[
		\left(\left(n+\sum_{\substack{\ell = 1\\\ell\neq k_i}}^{Q}\varepsilon_{\ell}r_{\ell}M_{\ell}\right) d_i +a_i\right)^{\Big[\left\lceil (a+\rho) \frac{\log(q_1)}{\log(q_t)} \right\rceil,\rho'_i\Big)} = q_i^{\rho'_i-\left\lceil (a+\rho) \frac{\log(q_1)}{\log(q_t)} \right\rceil}-1\;.
	\]
    Therefore, we conclude that the size of the exceptional set is at most
	\[
		O\left(R^Q\frac{N}{q_1^{\rho'_1-a-\rho}}\right).
	\]
	For the forth case, let $I=[0,a)$. Note that $M_k\equiv0\bmod q_1^a$, and hence we have for every $r_k$ that
	\[
		\left(r_kM_kd_1\right)^{I}=0.
	\]
	We deduce that for every combination of the $\epsilon_{\ell}$'s the value of
	\[
		\left(\left(n+\sum_{\ell = 2}^{Q}\varepsilon_{\ell}r_{\ell}M_{\ell}\right) d_i +a_i\right)^{I}
	\]
	is constant, therefore the exceptional set is empty.\\
	
	Having defined and analysed the intervals $I$ above, we conclude that in each case, we can find (at least) one index $k$ such that
	\[
		\left(\left(n+\sum_{\ell = 2}^{Q}\varepsilon_{\ell}r_{\ell}M_{\ell}\right)d_i+a_i\right)^{I}
			\neq
		\left(\left(n+\sum_{\ell = 2}^{Q}\varepsilon_{\ell}r_{\ell}M_{\ell}\right)d_i+a_i+r_1M_1d_i\right)^{I}
	\]
	except for at most
	\begin{align*}
		E_2 &= O\left(R^QN\left(\frac{1}{q^a}+\frac{q^a}{N}+\frac{1}{q_i^{x_k-y_k-\rho-1}}+\frac{1}{q_1^{\rho'_1-a-\rho}}\right)\right)= O\left(R^QN\left(\frac{1}{q_1^{\rho'_1-a-\rho}}\right)\right)
	\end{align*}
		
	many combinations of the $r_i$ and $n$.\\
	
	We now recall our observations preceding equation \eqref{eq_observation} and we see that the digits contained in one of the above intervals do not contribute to $K$. We conclude that
	\begin{align*}
		K(r_1M_1,\dots, r_QM_Q) = \frac 1N \sum_{N<n\leq 2N} \sum_{\varepsilon\in \{0,1\}^Q}\mathcal{C}^{|\varepsilon|}
		\e\Bigg(&\theta_1 s_{q_1}^{[a, \rho'_1)}\left(\left(n+\sum_{\ell = 1}^{Q} \varepsilon_{\ell}r_{\ell}M_{\ell}\right)d_1 			+a_1\right)\\
			&+\sum_{i=2}^{t}\theta_i s_{q_i}^{[0, \rho'_i)} \left(\left(n+\sum_{\ell = 1}^{Q} \varepsilon_{\ell}r_{\ell}M_{\ell}\right)d_i+a_i\right)\Bigg)\\
		+ O\left(R^Q\left(\frac{1}{q_1^{\rho'_1-a-\rho}}\right)\right).&
	\end{align*}
	
	We now observe that the function $s_{q_i}^{[0, \rho'_i)}(n)$ only depends on the last  ${\rho'_i}$. By the assumption that $\eta$ is small enough, we deduce by Corollary \ref{cor_independence} that 
	\begin{align*}
		\hspace{-10pt}K(r_1M_1,\dots, r_QM_{Q}) &= \frac 1{q_1^{\rho'_1-a}} \sum_{1\leq n \leq q_1^{\rho'_1-a}} \sum_{\varepsilon\in \{0,1\}^Q}\mathcal{C}^{|\varepsilon|}
		\e\left(\theta_1 s_{q_1}^{[a,\rho'_1)}\left(\left(nq_1^{a}+\sum_{\ell=1}^{Q}\varepsilon_{\ell}r_{\ell}M_{\ell}\right)d_1 +a_1\right)\right)\\ 
		&\hspace{-55pt}\times \frac 1{\lcm(q_2^{\rho'_2},\dots,q_t^{\rho'_t})} \sum_{n=0}^{\lcm(q_2^{\rho'_2},\dots,q_t^{\rho'_t})} \sum_{\varepsilon\in \{0,1\}^Q}\mathcal{C}^{|\varepsilon|}
		\e\left(\sum_{i=2}^{t}\theta_i s_{q_i}^{[0,\rho'_i)}\left(\left(n+\sum_{\ell=1}^{Q}\varepsilon_{\ell}r_{\ell}M_{\ell}\right)d_i +a_i\right)\right)\\
		& + O\left(R^Q\left(\frac{1}{q_1^{\rho_1'-a-\rho}}\right) +  \frac{1}{q_1^{\rho_1}}\right).
	\end{align*}
	The second error term comes from Corollary \ref{cor_independence} and the definition of $\rho$. Furthermore, we appeal to the relationship of $\rho$ and $\rho_1'$ to see that the second error term is dominated by the first, hence we can ignore it in the big $\mathrm{O}$ notation.\\
	
	We further observe that $|S(\theta_1, \dots, \theta_t)|^{2^Q}$ only depends on $|K(r_1M_1,\dots, r_QM_{Q})|$. We can therefore estimate the second factor trivially with 1 and we note that by assumption $q_1^a\mid M_{\ell}$ for all $1\leq \ell\leq Q$ we can find $M_{\ell}'$ such that $q_1^aM_{\ell}'= M_{\ell}$. Combining these two observations yields
	\begin{align*}
		|K(r_1M_1,\dots, r_QM_{Q})| &\leq \left|\frac 1{q_1^{\rho'-a}} \sum_{1\leq n \leq q_1^{\rho'-a}} \sum_{\varepsilon\in \{0,1\}^Q}\mathcal{C}^{|\varepsilon|} \e\left(\theta_1 s_{q_1}^{[a,\rho'_1)}\left(\left(nq_1^{a}+\sum_{\ell=1}^{Q}\varepsilon_{\ell}r_{\ell}M_{\ell}'q_1^{a}\right)d_1+a_1\right)\right)\right|\\
		& + O\left(R^Q\left(\frac{1}{q_1^{\rho_1'-a-\rho}}\right)\right).
	\end{align*}
	We plug this estimate back into \eqref{eq_spower} and get
	\begin{align*}
		S(\theta_1, \dots, \theta_t)|^{2^Q} &\ll \frac {1}{R^Q}\sum_{r\in\{1,\dots,R\}^Q}\\ &\left|\frac 1{q_1^{\rho_1'-a}} \sum_{1\leq n \leq q_1^{\rho'_1-a}} \sum_{\varepsilon\in \{0,1\}^Q}\mathcal{C}^{|\varepsilon|} \e\left(\theta_1 s_{q_1}^{[a,\rho_1')}\left(\left(nq_1^{a}+\sum_{\ell=1}^{Q}\varepsilon_{\ell}r_{\ell}M_{\ell}'q_1^{a}\right)d_1+a_1\right)\right)\right| + E_3,
	\end{align*}
	where
	\begin{equation*}
		E_3 = O\left(\frac{(M_1+\dots + M_{Q})R}{N}+\frac 1R + \frac{1}{q_1^{\rho_1'-a-\rho}}\right) =O\left(\frac{1}{q_1^{\rho_1'-a-\rho}}\right),
	\end{equation*}
	because we know that $R=q_1^{\rho}$ and that $M_k\leq q_1^{28a}$.
	We extend the summation of $R$ to $r\in\{1,\dots,q_1^{\rho'_1-a}\}^Q$ and see that
	\begin{align*}
		|S(\theta_1, \dots, \theta_t)|^{2^Q} &\ll \frac {1}{q_1^{\rho'_1-a}R^Q}\sum_{r\in\{1,\dots,q_1^{\rho_1'-a}\}^Q}\\
		 &\hspace{10pt}\left|\sum_{1\leq n \leq q_1^{\rho'_1-a}} \sum_{\varepsilon\in \{0,1\}^Q}\mathcal{C}^{|\varepsilon|} \e\left(\theta_1 s_{q_1}^{[a,\rho_1')}\left(\left(nq_1^{a}+\sum_{\ell=1}^{Q}\varepsilon_{\ell}r_{\ell}M_{\ell}'q_1^{a}\right)d_1+a_1\right)\right)\right|+E_3.
	\end{align*}
	
	Next, we observe that because $\gcd(d_1,q_1)=1$, $nq_1^{a}d_1$ attains every digit combination in the interval $[a,\rho'_1)$ exactly once as $n$ varies from $n=1$ to $n=q_1^{\rho'_1-a}$. Therefore, we see that the shift by $a_1$ is just a reordering of this list. Hence, we can ignore the summand $a_1$. Similarly, if we replace $nq_1^{a}d_1$ by just $nq_1^{a}$, we see that this also just corresponds to a reordering of the digit combination.\\
	
	Additionally, we have that $\gcd(M_{\ell}'d_1, q_1)=1$, because we ensured that $\delta_a(M_{\ell})\neq 0$, i.e. $\delta_1(M_{\ell}')\neq 0$. By the same observation as above, we see that $r_{\ell}M_{\ell}'q_1^{a}d_1$ attains every combination of the digits at positions $[a,\rho'_1)$ exactly once, as $r_{\ell}\in \{1,\dots,q_1^{\rho'_1-a}\}$. Similarly to above, we observe that we can replace $r_{\ell}M_{\ell}'q_{a}d_1$ by $r_{\ell}q_1^{a}$.\\
	Combining these observations yields
	\begin{align*}
		|S(\theta_1, \dots, \theta_t)|^{2^Q} &\ll \frac {1}{q_1^{\rho_1'-a}R^Q} \sum_{r\in\{1,\dots,q_1^{\rho_1'-a}\}^Q} \\
		&\hspace{10pt}\left|\sum_{1\leq n \leq q_1^{\rho_1'-a}} \sum_{\varepsilon\in \{0,1\}^Q}\mathcal{C}^{|\varepsilon|} \e\left(\theta_1 s_{q_1}^{[a,\rho_1')}\left(nq_1^{a}+\sum_{\ell=1}^{Q}\varepsilon_{\ell}r_{\ell}q_1^{a}\right)\right)\right|+ O\left(\frac{1}{q_1^{\rho'_1-a-\rho}}\right).
	\end{align*}
	By the triangle inequality we can pull out the sum over $n$ and since $s_{q_1}^{[a,\infty)}\left(nq_1^{a}\right)=s_{q_1}\left(n\right)$, we have
	\begin{align*}
		|S(\theta_1, \dots, \theta_t)|^{2^Q} &\ll \frac {1}{q_1^{\rho_1'-a}R^Q} \sum_{1\leq n \leq q_1^{\rho_1'-a}} \sum_{r\in\{1,\dots,q_1^{\rho'}\}^Q}\left| \sum_{\varepsilon\in \{0,1\}^Q}\mathcal{C}^{|\varepsilon|} \e\left(\theta_1 s_{q_1}^{[0,\rho'-a)}\left(n+\sum_{\ell=1}^{Q}\varepsilon_{\ell}r_{\ell}\right)\right)\right| \\
		&+ O\left(\frac{1}{q^{\rho'_1-a-\rho}}\right).
	\end{align*}
	Squaring the equation above and applying the Cauchy-Schwarz inequality, we can get rid of the absolute value and get only one additional summation variable $r_{Q+1}$, compare \cite{DrmotaSpiegelhofer2025}.
	\begin{align*}
		|S(\theta_1, \dots, \theta_t)|^{2^{Q+1}} &\ll \frac {1}{q_1^{\rho_1'-a}R^{Q+1}} \sum_{1\leq n \leq q_1^{\rho'_1-a}}  \sum_{r\in\{1,\dots,q_1^{\rho'}\}^{Q+1}} \sum_{\varepsilon\in \{0,1\}^{Q+1}}\mathcal{C}^{|\varepsilon|}
		\e\left(\theta_1 s_{q_1}^{[0,\rho'_1-a)}\left(n+\sum_{\ell=1}^{Q+1}\varepsilon_{\ell}r_{\ell}\right)\right)\\ 
		&+ O\left(\frac{1}{q_1^{\rho_1'-a-\rho}} \right).
	\end{align*}
	We obtain additional error terms of $q_1^{\rho}/N $ and $1$ which are both dominated by $q_1^{a+\rho-\rho_1'}$. The right-hand side is now nearly a Gowers norm. It is only off by a factor
	\[
		\frac{q_1^{(\rho_1'-a)(Q+1)}}{R^{Q+1}}
	\]
	Therefore, we conclude that by Corollary 1.6 in Jelinek \cite{Jelinek2025} the right-hand side is bounded above by
	\begin{equation*}
		\ll \frac{q_1^{(\rho_1'- a)(Q+1)}}{R^{Q+1}} N^{-c||(q_1-1)\theta_1||^2} + O\left(\frac{1}{q_1^{\rho_1'-a-\rho}} \right)= q_1^{(\rho_1' - a - \rho)(Q+1)} N^{-c||(q_1-1)\theta_1||^2} + O\left(\frac{1}{q_1^{\rho'_1-a-\rho}} \right).
	\end{equation*}
	If we choose 
	\[
		\rho_1'=\min\{a+2\rho, a+\rho + c||(q_1-1)\theta_1||^2/(Q+2)\},
	\]
	we deduce that the expression above is bounded by
	\begin{equation*}
		 \ll N^{-c'||(q_1-1)\theta_1||^2}
	\end{equation*} Therefore we get that
	\begin{equation}
		|S(\theta_1, \dots, \theta_t)|^{2^{Q+1}} \ll N^{-c'||(q_1-1)\theta_1||^2}
	\end{equation}
	and in particular that
	\begin{equation}
		|S(\theta_1, \dots, \theta_t)| \ll N^{-c''||(q_1-1)\theta_1||^2},
	\end{equation}
	proving the proposition.
\end{proof}
\section{Proof of Theorem \ref{main_thm_indep}}\label{sec_proof_main_theorem}
Now we are able to proof Theorem \ref{main_thm_indep}
\begin{proof}[Proof of Theorem~\ref{main_thm_indep}]
	We first need to find an arithmetic progression $dk+L$, with step size $d$ and shift $L$. Let $d=q_1^{\xi_1+\zeta_1}q_2^{\xi_2+\zeta_2}$. By the heuristic we expect that
	\begin{align}
		\mathbb{E}(s_{q_1}(dk+L))&=\frac{q_1-1}{2}(\log_{q_1}N - \zeta_1 - \xi_1) + s_{q_1}^{[0,\xi_1+\zeta_1)}(L)\\
		\mathbb{E}(s_{q_2}(dk+L))&=\frac{q_2-1}{2}(\log_{q_2}N - \zeta_2 - \xi_2) + s_{q_2}^{[0,\xi_2+\zeta_2)}(L)
	\end{align}
	Therefore if we infer that $\mathbb{E}(f(dk+L))=b\mathbb{E}(s_{q_1}(dk+L))-a\mathbb{E}(s_{q_2}(dk+L))=0$, we get that $\zeta_1$ and $\zeta_2$ and $L$ need to satisfy the following relation:
	\begin{align*}
	    \zeta_2 =  \Biggl\lfloor log_{q_2}&(N)\left(1-\frac ba \frac{(q_1-1)\log q_2}{(q_2-1)\log q_1}\right)+\frac ba \frac{q_1-1}{q_2-1}(\xi_1+\zeta_1)-\xi_2\\
        &+\frac{2}{q_2-1}\left(s_{q_2}^{[0,\xi_2+\zeta_2)}(L')-\frac ba s_{q_1}^{[0,\xi_1+\zeta_1)}(L')\right)\Biggr\rfloor
	\end{align*}
	We pick $\zeta_1 = 0$, and $\zeta_2 = (1-\varepsilon)\log_{q_2}(N)$. It follows that by the assumption $\xi_1,\xi_2=o(\log N)$, we have $s_{q_1}^{(\xi_1+\zeta_1)}(L)=o(\log N)$. Hence, we are left with
	\[
	(1-\varepsilon)\log_{q_2}(N) \sim log_{q_2}(N)\left(1-\frac ba \frac{(q_1-1)\log q_2}{(q_2-1)\log q_1}\right) + \frac{2}{q_2-1}s_{q_2}^{\xi_2+\zeta_2}(L)
	\]
	and that
	\begin{align*}
	s_{q_2}^{[0,\xi_2+\zeta_2)}(L) &\sim \log_{q_2}(N)\left(\frac{q_2-1}{2}-\varepsilon-\frac{q_2-1}{2}+\frac ba \frac{(q_1-1)\log q_2}{2\log q_1}\right)\\
	&= \log_{q_2}(N)\left(\frac ba \frac{(q_1-1)\log q_2}{2\log q_1}-\varepsilon\right) < \log_{q_2} (N) \frac{q_2-1}{2},
	\end{align*}
	by our assumption that
	\[
		\frac ba \frac{(q_1-1)\log q_2}{(q_2-1)\log q_1}<1
	\]
	We pick $L$ such that $L\equiv L' \bmod q_1^{\xi_1}q_2^{\xi_2}$ and $L < d$. Therefore, we have that the value of the sum of digits function of $L$ can be anything between $o(\log N)$ and $(1-\varepsilon)(q_2-1)\log_{q_2}(N)$, we have in particular that we can find an L such that the equation above is satisfied.
	
	We also note that the step size satisfies
	\[
	d = q_1^{\xi_1+\zeta_1}q_2^{\xi_2+\zeta_2}<N^{1-\varepsilon}
	\]
	hence $d$ is indeed the step size of an arithmetic progression in the dyadic interval $[N,2N]$. We now want to use Proposition \ref{thm_concentration} and Proposition \ref{prop_equi} to show that there at least $N^{\gamma}$ many integers $n\in [N,2N]$ such that $f(n)=jm$ for some $-J\leq j\leq J$. Afterwards Proposition \ref{prop_shift} guarantees that $f(n+d_{-j})=0$.\\
	
	In order to apply Proposition \ref{thm_concentration} and Proposition \ref{prop_equi} it remains to show that $d$ is $N^{\eta}$-admissible for any $\eta>0$, if $N$ is sufficiently large.
	
	We choose $\zeta_1',\zeta_2'$ such that $\lfloor\log\log N\rfloor|\xi_1+\zeta_1'$ and $\lfloor\log\log N\rfloor|\xi_2+\zeta_2'$. Further we assume $|\zeta_i'-\zeta_i|\leq \log\log N$ for $i=1,2$. This changes the value of the sum of digit function by at most $O(\log\log N)$, hence does not influence the analysis of the relationship between the variables above. We now apply Corollary \ref{cor_powers_admissible} to 
	\[
	d'=q_1^{\xi_1+\zeta_1'}q_2^{\xi_2+\zeta_2'}=\left(q_1^{(\xi_1+\zeta_1')/\lfloor\log\log N\rfloor}q_2^{(\xi_2+\zeta_2')/\lfloor\log\log N\rfloor}\right)^{\lfloor\log\log N\rfloor}.
	\]
	This gives if $N$ is sufficiently large that $d'$ is $N^{\eta}$-admissible for all $\eta>0$. Hence, we can apply Proposition \ref{thm_concentration} and Proposition \ref{prop_equi} to the arithmetic progression $d'k+L$. Let
	\begin{equation}
		A'':=\left(L'+q_1^{\xi_1+\zeta_1'}q_2^{\xi_2+\zeta_2'}\mathbb{N}\right)\cap [N,2N)
	\end{equation}
	and
	\begin{equation}
	I:=\left\{k\in\mathbb{N}:N\leq L'+q_1^{\xi_1+\zeta_1'}q_2^{\xi_2+\zeta_2'} k< 2N\right\}.
\end{equation}
	We have that $|f(n)|\leq Jm$ for all $n\in A''$ except at most $C|I|\lambda^{-D}$.	Proposition \ref{prop_equi} gives us that the values of $f(n)$ are equidistributed modulo $m$ for each such $L'$. Hence, there are at least $|I|/m(1+o(1))$ many $n\in A''$ such that $f(n)\equiv 0 \bmod m$. Combining those two observations we see that, assuming that $D>3$, we have for each $L'$ at least
	\[
	\frac{C|I|}{m}(1+o(1))
	\]
	many $n\in A''\subset [N,2N]$ such that $|f(n)|\leq Jm$ and $f(n)\equiv 0 \bmod m$. In particular this states that $f(n)=jm$ for some $-J\leq j\leq J$.\\
    We note that modulo $r=\gcd(a(q_1-1,b(q_2-1))$, we have that
    \[
        f(n) \equiv as_{q_1}(n)-bs_{q_2}(n) \equiv (a-b)n \bmod r 
    \]
    Hence, recalling that $m\equiv 1\bmod r$ we see that if there exist an $n$ such that $f(n)=jm$ then $\gcd(a-b,r)\mid \gcd(j,r)$, which is the condition on $j$ in Proposition \ref{prop_shift}. Now we can apply Proposition \ref{prop_shift} and we see that 
	\[
	   f(n+d_{-j})=jm-jm=0.
	\]
	For the quantitative statement, we note that since $s_{q_2}(L') \sim  \frac{(q_1-1)}{2r\log_{q_1}}\log(N)$, we have at least $N^{\beta-\varepsilon}$ many such $L'$. Additionally, we see that $|I|\sim N^{\varepsilon'}$. Hence, combining these two observations we get that there are at least
	\[
	   N^{\beta-\varepsilon}N^{\varepsilon'}=N^{\beta-\varepsilon''}
	\]
	many integers in $[N,2N]$ such that
	\[
	   f(n+d_{-j})=jm-jm=0.
	\]
\end{proof}
\section{Proof of Theorem \ref{main_thm_dep}}\label{sec_Proof_of_dep}
In this section we prove Theorem \ref{main_thm_dep}, which describes which ratios of two sum of digits functions are possible, if the two bases are multiplicatively dependent. The methods involved are very explicit and independent of the other sections of this paper.
\begin{proof}[Proof of Theorem~\ref{main_thm_dep}]
	The key to the proof is the following observation:\\
	Since $q_1=b^k$ and $q_2=b^{\ell}$ we see that if we group the digits from the right in blocks of $k$ digits in base $b$, each such block corresponds to a digit in base $q_1$. Similarly, if we group the digits from the right in blocks of $\ell$ digits in base $b$, each such block corresponds to a digit in base $q_2$.\\
	Our first aim is to show that $\frac{1}{b^{\ell-1}}$ and $b^{k-1}$ can each be attained infinitely many times. For this observe the following identities for the digit sums of $b^x$ for some $x\in \NN$, and $a < b$ a non-negative integer:
	\[
	s_{q_1}(ab^x)=ab^{x\bmod k} \text{ and } s_{q_2}(ab^x)=ab^{x\bmod \ell}
	\]
	We can now use the Chinese remainder theorem by our assumption that $k$ and $\ell$ are coprime, to find infinitely many $x$, such that $x\equiv k-1 \bmod k$ and $x~\equiv 0 \bmod \ell$, which gives for any $a< b$ that
	\[
	\frac{s_{q_1}(ab^x)}{s_{q_2}(ab^x)}=\frac{ab^{k-1}}{ab^0}=b^{k-1}.
	\]
	Similarly, we can find infinitely many $x$ such that $x \equiv 0 \bmod k$ and $x\equiv \ell -1 \bmod \ell$ which corresponds to the ratio
	\[
	\frac{s_{q_1}(ab^x)}{s_{q_2}(ab^x)}=\frac{1}{b^{\ell-1}}.
	\]
	To see that these values are indeed the maximum and the minimum, respectively, we write $n=\sum_{x\in \NN}a_xb^x$ in the base $b$ expansion. By the fact that $k$ digits in base $b$ correspond to a single digit in base $q_1$ we have that 
	\[
	s_{q_1}(n) = s_{q_1}\left(\sum_{x\in \NN}a_xb^x\right) = \sum_{x\in \NN}a_xs_{q_1}(b^x)=\sum_{x\in \NN}a_xb^{x\bmod k}.
	\]
	Similarly, in base $q_2$ we have that
	\[
	s_{q_2}(n) =\sum_{x\in \NN}a_xb^{x\bmod \ell}.
	\]
	Therefore, we have that
	\begin{align*}
		\frac{s_{q_1}(n)}{s_{q_2}(n)} &= \frac{\sum_{x\in \NN}a_xb^{x\bmod k}}{\sum_{x\in \NN}a_xb^{x\bmod \ell}}.
	\end{align*}
	One can easily show by induction that
	\begin{align*}
		\min_{\substack{x\in\NN\\a_x\neq 0}}\left(\frac{b^{x\bmod k}}{b^{x\bmod \ell}}\right) \leq \frac{\sum_{x\in \NN}a_xb^{x\bmod k}}{\sum_{x\in \NN}a_xb^{x\bmod \ell}} \leq \max_{\substack{x\in\NN\\a_x\neq 0}}\left(\frac{b^{x\bmod k}}{b^{x\bmod \ell}}\right).
	\end{align*}
	Therefore, we see that 
	\[ 
	\frac{1}{b^{\ell-1}}\leq r \leq b^{k-1}.
	\]
	It only remains to prove that for any such rational number $r=u/v$, with $u,v$ coprime, we can find an $n\in \NN$ such that
	\[
	\frac{s_{q_1}(n)}{s_{q_2}(n)}=\frac uv
	\]
	Let $x>0$ be minimal such that $x\equiv k-1 \bmod k$ and $x\equiv 0 \bmod \ell$, and let $x_i=x+ik\ell$. Similarly, let $y>0$ be minimal such that $y\equiv 0 \bmod k$ and $y\equiv \ell-1 \bmod \ell$, and let $y_i=y+ik\ell$. Consider
	\[
	n=\sum_{i=1}^{s}b^{x_i}+\sum_{i=s+1}^{s+t}b^{y_i}
	\]
	for some $s,t>0$. Our observations above give that $s_{q_1}(n)= sb^{k-1}+tb^{0}$ and $s_{q_2}(n)= sb^{0}+tb^{\ell-1}$. Therefore, we are trying to solve the following equation for $s,t>0$:
	\[
	\frac{sb^{k-1}+t}{s+tb^{\ell-1}}=\frac uv
	\]
	Rearranging gives that
	\[
	s(vb^{k-1}-u)=t(ub^{\ell-1}-v)\,.
	\]
	We see that the brackets are both positive if (and only if)
	\[ 
	\frac{1}{b^{\ell-1}}\leq \frac uv \leq b^{k-1}\,.
	\]
	Therefore, for any such $u,v$ the equation has infinitely many solutions for $s,t>0$. Hence, recalling that $n=\sum_{i=1}^{s}b^{x_i}+\sum_{i=s+1}^{s+t}b^{y_i}$ we see that
	\[
	\frac{s_{q_1}(n)}{s_{q_2}(n)}=\frac uv.
	\]
	To get a quantitative lower bound, we consider 
	\[
	n=a+\sum_{i=r}^{r+s}b^{x_i}+\sum_{i=r+s+1}^{s+r+t}b^{y_i}
	\]
	where $r$ is minimal such that $a<b^{x_i}$. We say that $s_{q_1}(a)=e$ and $s_{q_2}(a)=f$ and we again want to solve for $s,t>0$ such that
	\[
	\frac{sb^{k-1}+t+e}{s+tb^{\ell-1}+f}=\frac uv.
	\]
	Rearranging the equation above, we get that
	\[
	s(vb^{k-1}-u)-t(ub^{\ell-1}-v)=f(ub^{\ell-1}-v)-e(vb^{k-1}-u)\,.
	\]
	If we assume that $a\leq b^M$, we have that $e,f\leq cM$, for some constant $c$. Therefore, we can find a solution for $s,t$ to the above equation if $s,t$ are of the size $\sim cM$ and hence $n\leq b^{c_1M}$. 
	
	In particular, this shows that given all integers up to $N$, we have at least $N^c$ many numbers $n$ such that
	\[\frac{s_{q_1}(n)}{s_{q_2}(n)}=\frac uv.\]
\end{proof}

%\section{Acknowledgements}

\printbibliography

@misc{BloomCroot2025,
      title={Integers with small digits in multiple bases}, 
      author={Thomas F. Bloom and Ernie Croot},
      year={2025},
      eprint={2509.02835},
      archivePrefix={arXiv},
      primaryClass={math.NT},
      url={https://arxiv.org/abs/2509.02835}, 
}

@Article{BretecheStollTenenbaum2019,
 Author = {R\'egis {de la Bret\`eche} and Thomas {Stoll} and G\'erald {Tenenbaum}},
 Title = {{Somme des chiffres et changement de base}},
 FJournal = {{Annales de l'Institut Fourier}},
 Journal = {{Ann. Inst. Fourier}},
 ISSN = {0373-0956; 1777-5310/e},
 Volume = {69},
 Number = {6},
 Pages = {2507--2518},
 Year = {2019},
 Publisher = {Universit\'e Joseph Fourier, Grenoble; Association des Annales de l'Institut Fourier, Saint-Martin d'H\`eres},
 Language = {French},
 MSC2010 = {11A63 11K16 11K60 11J82},
 Zbl = {1455.11018}
}

@incollection{DHLL2017,
  TITLE = {Sums of the digits in bases $2$ and $3$},
  AUTHOR = {Deshouillers, Jean-Marc and Habsieger, Laurent and Laishram, Shanta and Landreau, Bernard},
  URL = {https://hal.archives-ouvertes.fr/hal-01401869},
  BOOKTITLE = {Number theory --- {D}iophantine problems, uniform distribution and applications},
  PUBLISHER = {Springer},
  PAGES = {211-217},
  YEAR = {2017},
  HAL_ID = {hal-01401869},
  HAL_VERSION = {v1},
}

@misc{DimitrovHowe2021,
      title={Powers of $3$ with few nonzero bits and a conjecture of {E}rd{\H{o}}s},
      author={Vassil S. Dimitrov and Everett W. Howe},
      year={2021},
      eprint={2105.06440},
      archivePrefix={arXiv},
      primaryClass={math.NT}
}

@misc{DrmotaSpiegelhofer2025,
      title={The joint distribution of binary and ternary digits sums}, 
      author={Michael Drmota and Lukas Spiegelhofer},
      year={2025},
      eprint={2501.00850},
      archivePrefix={arXiv},
      primaryClass={math.NT},
      url={https://arxiv.org/abs/2501.00850}, 
}

@Article{Erdos1979,
 Author = {Paul {Erd\H{o}s}},
 Title = {{Some unconventional problems in number theory}},
 FJournal = {{Mathematics Magazine}},
 Journal = {{Math. Mag.}},
 ISSN = {0025-570X; 1930-0980/e},
 Volume = {52},
 Pages = {67--70},
 Year = {1979},
 Publisher = {{Mathematical Association of America (MAA), Washington, D.C.; Taylor \& Francis, Abingdon, Oxfordshire}},
 Language = {English},
 MSC2010 = {11-02 11A05 11A25 11N05 00A07},
 Zbl = {0407.10001}
}

@Book{Erdos1980,
 Author = {Paul {Erd\H{o}s} and Ronald L. {Graham}},
 Title = {{Old and new problems and results in combinatorial number theory}},
 FJournal = {{Monographies de l'Enseignement Math\'ematique}},
 Journal = {{Monogr. Enseign. Math.}},
 ISSN = {0425-0818},
 Volume = {28},
 Year = {1980},
 Publisher = {L'Enseignement Math\'ematique, Universit\'e de Gen\`eve, Gen\`eve},
 Language = {English},
 MSC2010 = {11-02 00A07 11Bxx},
 Zbl = {0434.10001}
}

@Misc{Furstenberg1970,
 Author = {H. {Furstenberg}},
 Title = {{Intersections of Cantor sets and transversality of semi-groups}},
 Year = {1970},
 Language = {English},
 HowPublished = {{Probl. Analysis, Sympos. in Honor of Salomon Bochner, Princeton Univ. 1969, 41-59 (1970).}},
 MSC2010 = {28A78 28A12},
 Zbl = {0208.32203}
}

@Article{GranvilleRamare1996,
 Author = {Andrew {Granville} and Olivier {Ramar\'e}},
 Title = {{Explicit bounds on exponential sums and the scarcity of squarefree binomial coefficients}},
 FJournal = {{Mathematika}},
 Journal = {{Mathematika}},
 ISSN = {0025-5793; 2041-7942/e},
 Volume = {43},
 Number = {1},
 Pages = {73--107},
 Year = {1996},
 Publisher = {John Wiley \& Sons, Chichester; London Mathematical Society, London},
 Language = {English},
 MSC2010 = {11B65 11L20 11L07},
 Zbl = {0868.11009}
}

@misc{Jelinek2025,
      title={Gowers norms for linearly recurrent numeration systems}, 
      author={Pascal Jelinek},
      year={2025},
      eprint={2510.16947},
      archivePrefix={arXiv},
      primaryClass={math.NT},
      url={https://arxiv.org/abs/2510.16947}, 
}

@Article{MauduitRivat2010,
AUTHOR = {Mauduit, Christian and Rivat, Jo\"{e}l},
     TITLE = {Sur un probl\`eme de {G}elfond: la somme des chiffres des
              nombres premiers},
   JOURNAL = {Ann. of Math. (2)},
  FJOURNAL = {Annals of Mathematics. Second Series},
    VOLUME = {171},
      YEAR = {2010},
    NUMBER = {3},
     PAGES = {1591--1646},
      ISSN = {0003-486X},
   MRCLASS = {11K36 (11A63 11J71 11L07 11N13)},
  MRNUMBER = {2680394},
MRREVIEWER = {Y.-F. S. P\'{e}termann},
       DOI = {10.4007/annals.2010.171.1591},
       URL = {https://doi.org/10.4007/annals.2010.171.1591},
}

@Article{Sarkozy1985,
 Author = {A. {S\'ark\"ozy}},
 Title = {{On divisors of binomial coefficients. I}},
 FJournal = {{Journal of Number Theory}},
 Journal = {{J. Number Theory}},
 ISSN = {0022-314X; 1096-1658/e},
 Volume = {20},
 Pages = {70--80},
 Year = {1985},
 Publisher = {Elsevier (Academic Press), San Diego, CA},
 Language = {English},
 MSC2010 = {11B65 05A10 11L03},
 Zbl = {0551.10002}
}

@Article{SengeStraus1973,
 Author = {H. G. {Senge} and E. G. {Straus}},
 Title = {{PV-numbers and sets of multiplicity}},
 FJournal = {{Periodica Mathematica Hungarica}},
 Journal = {{Period. Math. Hung.}},
 ISSN = {0031-5303; 1588-2829/e},
 Volume = {3},
 Pages = {93--100},
 Year = {1973},
 Publisher = {Springer Netherlands, Dordrecht; Akad\'emiai Kiad\'o, Budapest},
 Language = {English},
 MSC2010 = {11R06},
 Zbl = {0248.12004}
}

@Article{Shmerkin2019,
 Author = {Pablo {Shmerkin}},
 Title = {{On Furstenberg's intersection conjecture, self-similar measures, and the $L^q$ norms of convolutions}},
 FJournal = {{Annals of Mathematics. Second Series}},
 Journal = {{Ann. Math. (2)}},
 ISSN = {0003-486X; 1939-8980/e},
 Volume = {189},
 Number = {2},
 Pages = {319--391},
 Year = {2019},
 Publisher = {{Princeton University, Mathematics Department, Princeton, NJ}},
 Language = {English},
 MSC2010 = {11K55 28A80 37C45},
 Zbl = {1426.11079}
}

@article {Spiegelhofer2018,
    AUTHOR = {Spiegelhofer, Lukas},
     TITLE = {Pseudorandomness of the {O}strowski sum-of-digits function},
   JOURNAL = {J. Th\'eor. Nombres Bordeaux},
  FJOURNAL = {Journal de Th\'eorie des Nombres de Bordeaux},
    VOLUME = {30},
      YEAR = {2018},
    NUMBER = {2},
     PAGES = {637--649},
      ISSN = {1246-7405,2118-8572},
   MRCLASS = {11A55 (11A67)},
  MRNUMBER = {3891330},
MRREVIEWER = {Vilius\ Stakenas},
       URL = {http://jtnb.cedram.org/item?id=JTNB_2018__30_2_637_0},
}

@misc{Spiegelhofer2023,
      title={Thue--Morse along the sequence of cubes}, 
      author={Lukas Spiegelhofer},
      year={2023},
      eprint={2308.09498},
      archivePrefix={arXiv},
      primaryClass={math.NT},
      url={https://arxiv.org/abs/2308.09498}, 
}

@article {Spiegelhofer2023a,
    AUTHOR = {Spiegelhofer, Lukas},
     TITLE = {Collisions of digit sums in bases 2 and 3},
   JOURNAL = {Israel J. Math.},
  FJOURNAL = {Israel Journal of Mathematics},
    VOLUME = {258},
      YEAR = {2023},
    NUMBER = {1},
     PAGES = {475--502},
      ISSN = {0021-2172,1565-8511},
   MRCLASS = {11A63 (11B25 11B50 60F10)},
  MRNUMBER = {4682942},
MRREVIEWER = {Clemens\ Heuberger},
       DOI = {10.1007/s11856-023-2478-8},
       URL = {https://doi.org/10.1007/s11856-023-2478-8},
}

@article{Stewart1980,
 Author = {C. L. {Stewart}},
 Title = {On the representation of an integer in two different bases},
 FJournal = {Journal f\"ur die {R}eine und {A}ngewandte {M}athematik},
 Journal = {J. {R}eine {A}ngew. {M}ath.},
 ISSN = {0075-4102; 1435-5345/e},
 Volume = {319},
 Pages = {63--72},
 Year = {1980},
 Publisher = {De Gruyter, Berlin},
 Language = {English},
 MSC2010 = {11A63 11A25},
 Zbl = {0426.10008}
}

@misc{Toumi2025,
      title={The level of distribution of the sum-of-digits function in arithmetic progressions}, 
      author={Nathan Toumi},
      year={2025},
      eprint={2504.02784},
      archivePrefix={arXiv},
      primaryClass={math.NT},
      url={https://arxiv.org/abs/2504.02784}, 
}

@Article{Wu2019,
 Author = {Meng {Wu}},
 Title = {{A proof of Furstenberg's conjecture on the intersections of $\times p$- and $\times q$-invariant sets}},
 FJournal = {{Annals of Mathematics. Second Series}},
 Journal = {{Ann. Math. (2)}},
 ISSN = {0003-486X; 1939-8980/e},
 Volume = {189},
 Number = {3},
 Pages = {707--751},
 Year = {2019},
 Publisher = {{Princeton University, Mathematics Department, Princeton, NJ}},
 Language = {English},
 MSC2010 = {11K55 28A50 28A80 28D05 37C45},
 Zbl = {1430.11106}
}
\end{document}